 \documentclass[11pt, reqno]{amsart}

\usepackage[utf8x]{inputenc}

\usepackage{amsmath, amsthm, amsfonts, amssymb}
\usepackage{epsfig, enumerate}
\usepackage{color}
\usepackage{dsfont}

\usepackage{pdf14}

\textheight 21.1truecm
\textwidth 15truecm
\oddsidemargin .5truecm
\evensidemargin .5truecm
\topmargin 0cm

\DeclareMathOperator{\rank}{{\rm rank}}

\newcommand{\trr}{\triangleleft}


\newtheorem{Theo}{Theorem}
\newtheorem{prop}{Proposition}
\newtheorem{lemma}[prop]{Lemma}

\newtheorem{defini}{Definition}

\newtheorem*{rem*}{Remark}
\newtheorem{rem}{Remark} 




\newcommand{\A}{{\mathbf A}}

\newcommand{\C}{{\mathbf C}}
\newcommand{\D}{{\mathbf D}}

\newcommand{\N}{{\mathbf N}}

\newcommand{\T}{{\mathbf T}}

\newcommand{\V}{{\mathbf V}}


\newcommand{\bfa}{{\mathbf a}}

\newcommand{\bfu}{{\mathbf u}}
\newcommand{\bfv}{{\mathbf v}}
\newcommand{\bfw}{{\mathbf w}}



\newcommand{\Aa}{{\mathcal A}}

\newcommand{\Cc}{{\mathcal C}}

\newcommand{\Ll}{{\mathcal L}}
\newcommand{\Mm}{{\mathcal M}}

\newcommand{\Rr}{{\mathcal R}}
\newcommand{\Ss}{{\mathcal S}}

\newcommand{\Uu}{{\mathcal U}}
\newcommand{\Vv}{{\mathcal V}}



\newcommand{\Ab}{{\mathbb A}}
\newcommand{\BB}{{\mathbb B}}
\newcommand{\CC}{{\mathbb C}}
\newcommand{\DD}{{\mathbb D}}
\newcommand{\EE}{{\mathbb E}}

\newcommand{\GG}{{\mathbb G}}
\newcommand{\HH}{{\mathbb H}}
\newcommand{\II}{{\one}}

\newcommand{\KK}{{\mathbb K}}

\newcommand{\NN}{{\mathbb N}}

\newcommand{\PP}{{\mathbb P}}

\newcommand{\RR}{{\mathbb R}}
\newcommand{\Sb}{{\mathbb S}}
\newcommand{\TT}{{\mathbb T}}
\newcommand{\UU}{{\mathbb U}}

\newcommand{\WW}{{\mathbb W}}

\newcommand{\ZZ}{{\mathbb Z}}


\newcommand{\BBb}{{\mathfrak B}}


\newcommand{\bbb}{{\mathfrak b}}


\newcommand{\one}{{\bf 1}}
\newcommand{\nul}{{\bf 0}}


\newcommand{\qtx}[1]{\quad\text{#1}\quad}



\newcommand{\GL}{{\rm GL}}

\newcommand{\Her}{{\rm Her}}

\newcommand{\pmat}[1]{\begin{pmatrix} #1  \end{pmatrix}}
\newcommand{\smat}[1]{\left( \begin{smallmatrix} #1  \end{smallmatrix} \right)}
\newcommand{\mat}[1]{\begin{matrix} #1  \end{matrix}}


\newcommand{\diag}{{\rm diag}}
\DeclareMathOperator{\im}{{\rm Im}}

\DeclareMathOperator{\supp}{{\rm supp}}
\DeclareMathOperator{\spec}{{\rm spec}}

\newcommand{\be}[1]{\begin{equation} \label{#1} }

\renewcommand{\CC}{\mathds{C}}
\renewcommand{\RR}{\mathds{R}}
\renewcommand{\ZZ}{\mathds{Z}}
\renewcommand{\NN}{\mathds{N}}



\numberwithin{prop}{section}
\numberwithin{equation}{section}

\newcommand{\Ups}{\Upsilon}

\newcommand{\HSP}{\HH\Sb\PP}
\newcommand{\SP}{\Sb\PP}

\usepackage{hyperref}


\title[Transfer matrices for discrete Hermitian operators]{
Transfer matrices for discrete Hermitian operators and absolutely continuous spectrum}

\author{Christian Sadel}
\address[Sadel]{Facultad de Matem\'aitcas, Pontificia Universidad Cat\'olica de Chile} 
\email{chsadel@mat.uc.cl}


\subjclass[2010]{Primary 47A10, Secondary  60H25, 82B44, 47B36 }  
\keywords{Operators on graphs, finite hopping operators, spectral measure, absolutely continuous spectrum, extended states}

\begin{document}

\begin{abstract}
We introduce a transfer matrix method for the spectral analysis of discrete Hermitian operators with locally finite hopping. Such operators can be associated with a locally finite graph structure and the method works in principle on any such graph. The key result is a spectral averaging formula well known for Jacobi or 1-channel operators giving the spectral measure at a root vector by a weak limit of products of transfer matrices. 
Here, we assume an increase in the rank for the connections between spherical shells which is a typical situation and true on finite dimensional lattices $\ZZ^d$.
The product of transfer matrices are considered as a transformation of the relations of 'boundary resolvent data' along the shells. The trade off is that at each level or shell with more forward then backward connections (rank-increase) we have a set of transfer matrices at a fixed spectral parameter. Still, considering these products  we can relate the minimal norm growth over the set of all products with the spectral measure at the root and obtain several criteria for absolutely continuous spectrum. 
Finally, we give some example of operators on stair-like graphs (increasing width) which has absolutely continuous spectrum with a sufficiently fast decaying random shell-matrix-potential.
\end{abstract}

\maketitle

\tableofcontents


\section{Introduction}

\subsection{The basic model}

Let $\GG$ be some countable set and let $H$ be a (possibly unbounded) symmetric operator on $\ell^2(\GG)$ with locally finite hopping. 
Locally finite hopping means that for any $x\in \GG$  the set $\{y\in\GG:\langle\delta_x, H \delta_y\rangle\neq 0 \}$ is finite where $\langle\cdot , \cdot\rangle$ is the scalar product of $\ell^2(\GG)$ 
(which will be anti-linear in the first and linear in the second component here) and $\delta_x$ is the unit vector in $\ell^2(\GG)$ with $\delta_x(x)=1$ and $\delta_x(y)=0$ for $y\neq x$. 
A large group of discrete models fall into this category, in fact any finite range hopping Schrödinger operator on any locally finite graph like $\ell^2(\ZZ^d)$ is of this type.
Because of the locally finite hopping condition, $H$ extends naturally to an operator on the set of all functions from $\GG$ to $\CC$, $\CC^\GG$. 
The natural and maximal domain of $H$ is then given by all functions $\psi\in\ell^2(\GG)\subset \CC^\GG$ such that $H\psi\in \ell^2(\GG)$. 
For the spectral theory we will later assume that $H$ is self-adjoint with this maximal domain.

Part of the motivation for this work was the development of criteria for proving existence of absolutely continuous spectrum in higher dimensional models.
A big open problem is the so called extended states conjecture for the Anderson model in $\ell^2(\ZZ^d)$ with small disorder in $d\geq 3$ dimensions. The fact that this important problem in Mathematical Physics is 
unsolved for decades emphasizes the need for developing such methods.
The Anderson model on a general graph $\GG$ is given by a random operator on $\ell^2(\GG)$ which is the sum of a graph -Laplacian or adjacency operator and a random potential which is  independent identically distributed at each site.
The general wisdom (but it may depend on the graph) is that for large disorder (large variance) and at the edges of the spectrum the random potential dominates the spectral structure and one has pure point spectrum and so called Anderson localization. There are two general methods to prove this, the fractional moment method \cite{AM} and multi-scale analysis \cite{FS, GK1, GK2}.
The fractional moment method at high disorder works fine in graphs with a finite upper bound on the connectivity of one point \cite{Tau}. 
For a long time the high disorder Bernoulli Anderson model on $\ZZ^d$ for $d>1$ could not be handled for a long time. However, recently the localization has also been shown in this case in $2$ and $3$ dimensions \cite{Li,LZ}.
Existence of continuous spectrum for Anderson models at low disorder has first been proved for infinite dimensional hyperbolic type graphs like regular trees and tree-like structures \cite{Kl, ASW, AW, FHS, FHS2, KS, KLW, Sa-FC, Sa-Fib}.
Using averaging effects for the potential, it has also been shown on special graphs with a finite-dimensional growth, so called anti-trees and partial antitrees \cite{Sa-AT, Sa-OC}.
The key point was a one-channel structure and the use of $2\times 2$ transfer matrices.

Inspired from the rather complicated form of the transfer matrices appearing in \cite{Sa-OC} we generalize this method further in a way that it can be in principle applied to any locally finite hopping operator, including models on  higher dimensional graphs like the Anderson model in $\ell^2(\ZZ^d)$. The main Theorem~\ref{Theo-1} is a generalization of a spectral averaging formula well known for Jacobi operators giving the spectral measure at the root as a limit of absolutely continuous measures involving an expression depending on transfer matrices. This formula can be found in the book by Carmona and Lacroix \cite[Theorem III.3.2 and III.3.6]{CL} and has been used for showing 
absolutely continuous spectrum for random one-dimensional models with fast enough decaying random potentials  \cite{KLS, LS}. A generalization of this formula to one-channel operators is also a key point in \cite{Sa-AT, Sa-OC}.
Unlike in these cases, here we have to deal with affine spaces of rectangular transfer matrices at each level and products of such spaces where the dimension grows. 
However, re-interpreting the product of transfer matrices as a transformation of a natural partial semi-group structure (associativity) of boundary resolvent data permits a generalization of this formula.
Furthermore, in this formula a maximum (or minimum in the denominator) for the choices of the transfer matrices within the affine spaces appears so that in principle one could try to find some specific choice to prove that a measure smaller than the actual spectral measure contains some absolutely continuous part. Then, the actual spectral measure will also contain some absolutely continuous part (cf. Theorem~\ref{Theo-1a}, Theorem~\ref{Theo-4} ).  

Whether this method can be used to solve the extended states conjecture remains to be seen as hard estimates on resolvents have to be done and a good choice of the transfer matrices has to be found. But we expect it to be 
usable for Anderson type models in between the antitree and the $\ZZ^d$ case to approach the open problem and state the difficulties more precisely.
As an immediate application we give a Last-Simon type proof \cite{LS} for absolutely continuous spectrum for random decaying shell-matrix potentials connecting one-dimensional wires, where the number of wires can be increasing and go towards infinity (cf. Theorem~\ref{Theo-5}).
This is already a non-one-dimensional problem and one has some (random) stair-like graph. As a by-product we also obtain the result for random decaying potentials on the strip as in \cite{FHS3} (cf. Theorem~\ref{Theo-7}). 
We also obtain absolutely continuous spectrum for random decaying shell-matrix potential on the Bethe lattice as a corollary (cf. Theorem~\ref{Theo-6}). 
This may seem weak as the Anderson model with iid potential (diagonal shell potential) as a.c. spectrum, however, the statement includes radially symmetric potentials where the whole model is equivalent to a sum of one-dimensional Jacobi operators and the decay is really needed.

Let us emphasize that the transfer matrix method for one and quasi-one dimensional models has been extremely fruitful in the past. For instance, by analyzing the transfer matrix cocycle the global theory for one-frequency quasi-periodic Schr\"odinger operators on $\ell^2(\ZZ)$  with analytic potential has been developed by Artur Avila \cite{Av} which was a major part for his Fields medal.
Many abstract general theorems relating the asymptotic behavior of transfer matrix products to spectral theory can be in some way linked to the spectral averaging formula mentioned above.
Therefore, we believe it can be a fruitful research program to revisit these relations and try to generalize such theorems using the spaces of transfer matrices as set up in this paper.

\subsection{Quasi-spherical partitions, radial structure}
Let $H$ be a symmetric operator on $\ell^2(\GG)$ with locally finite hopping as described above.
We may think of $\GG$ as a graph where $x,y\in\GG, x\neq y$ are connected if and only if $\langle \delta_x, H \delta_y\rangle\neq 0$.
Furthermore, let us assume that $\GG$ is connected with this graph structure, otherwise $H$ is an orthogonal sum of the operators on the connected components and we can reduce to one of them.
We will use the set up as in \cite{Sa-OC} and use a {\it quasi-spherical partition}, this means we partition $\GG$  into shells $S_n$ such that
$$
\GG=\bigsqcup_{n=0}^\infty S_n \qtx{with} \#(S_n):=s_n<\infty\;, 
$$ 
and
\begin{equation} \label{eq-cond-sph}
\langle \delta_x\,,\,H\,\delta_y\rangle\,=\,0 \qtx{if} x\in S_j\,,\,y\in  S_k \qtx{and} |j-k| \geq 2\;.
\end{equation}
We also define the sub-graphs
$$
\GG_{m,n}\,:=\,\bigsqcup_{j=m}^n S_j\qtx{for $m\leq n$}
$$
This means $H$ connects $S_n$ only to $S_n$ and $S_{n\pm 1}$. 
If $\GG$ with the graph structure from $H$ as described above is connected, then a possible way to obtain such a partition is to take the spherical partition where  $S_n=\{x\,:\,d(x,0)=n\}$ is the $n$-th sphere around some origin $0\in \GG$ and $d(x,y)$ denotes the graph distance\footnote{minimal number of edges passed when going from one point to another}.
For a standard Schr\"odinger operator in $\ZZ^d$ with the discrete Laplacian (graph adjacency operator) and some potential this leads to $S_n=\{x\in\ZZ^d\,:\,\|x\|_1=n\}$.

Choosing an order for all points in $S_n$ we identify $\ell^2(S_n)\cong \CC^{s_n}$ and using primarily the order of the shells we identify
$\ell^2(\GG_{m,n})$ with $\CC^{s_m+\ldots+s_n}$.
For $m\leq n$ we have canonical isometric embeddings 
$$
P_{m,n}\,:\, \CC^{s_m+\ldots +s_n}\cong \ell^2(\GG_{m,n}) \;\hookrightarrow\; \ell^2(\GG)\;,\quad 
P_n\,:=\,P_{n,n}\,:\,\ell^2(S_n)\;\hookrightarrow\; \ell^2(\GG)\,.
$$ 
Its adjoints $P_{m,n}^*$, $P_n^*$ are the natural projections from $\ell^2(\GG)$ onto $\ell^2(\GG_{m,n})$ o $\ell^2(S_n)$. 

We define the {\it partial operators} or {\it partial Hermitian matrices}  $H_{m,n}$ on  $\ell^2(\GG_{m,n})$, the $s_n\times s_n$ Hermitian matrix potentials $V_n$ and the $s_{n+1} \times s_n$ connection matrix $W_n$ by
\begin{equation}\label{eq-def-parts}
H_{m,n}\,:=\,P_{m,n}^* H P_{m,n}\;,\quad V_n\,:=\,P_n^* H P_n\;,\quad W_{n+1}\,:=\,P_{n+1}^* H P_n\;.
\end{equation}
Note that $V_n\in \Her(s_n)$ is a Hermitian $s_n \times s_n$ matrix.
The rank 
$$
r_n\,:=\,\rank\,W_n
$$
is at most the minimum of $s_{n-1}$ and $s_n$. In lattices $\ZZ^d$ using the spheres $S_n$ as mentioned above, we find that $s_n$ is non-decreasing and
$r_n$ is the maximal possible rank. 
However, one can use other decompositions and group together several spheres to one new shell in which case $r_n$ will not be maximal.

The polar or Hilbert-Schmidt decomposition of $W_n$ is given by $W_n=U_n D_n \hat U_{n-1}^*$
where $U_n\in \CC^{s_n\times r_n}$ and $\hat U_{n-1}\in \CC^{s_{n-1}\times r_n}$ are partial isometries,
$U_n^* U_n=\hat U_{n-1}^* \hat U_{n-1}=\II_{r_n}$ and $ D_n=\diag(D_{n,1},\ldots,D_{n,r_n})\,\in\,\RR^{r_n\times r_n}$ is diagonal and invertible with the non-zero singular values of $W_n$ along the diagonal. Here, we use the notation $\KK^{m\times n}$ for a $m\times n$ matrix with values in $\KK$ where $\KK=\RR$ or $\KK=\CC$.
By putting the singular value part $D_n$ to the left or right (or possibly splitting it partly to $U_n$ and partly to $U_{n-1}^*$) and introducing some (conventional) minus sign
we may write
\begin{align}
 W_n\,=\, -\Upsilon_n\,\Phi_{n-1}^* \qtx{where}
\Upsilon_n\,\in\,\CC^{s_n\times r_n}\;,\Phi_{n-1}\,\in\,\CC^{s_{n-1} \times r_n}\;
\end{align}
are all matrices of full rank. For example, we could choose $\Phi_n=\hat U_n$ and $\Upsilon_n=U_n D_n$.
However, the transfer matrix technique and formulas we develop will in fact 
work for any decomposition of this kind.  

Note that $W_0$ and $\Upsilon_0$ are not defined in this process (they do not appear as we do not have a $-1$st shell). We may select some root point $0\in S_0=\{0\}$ and let
$\Upsilon_0=P_0^*\delta_{0}\,\in\,\ell^2(S_0)\,\equiv\, \CC^{s_0}$ be the normalized vector supported on $\{0\}$. Or, we may also select some other non-zero {\it root-vector} $\Upsilon_0\in \ell^2(S_0)$.
In any case, we select $r_0=1$ so that $\Upsilon_0$ is a $s_0\times 1$ matrix of full rank 
which can be seen as a vector. 
In analogy of the notion of channels for strip-operators and the one-channel operators  we may refer to $\Upsilon_n$ as the backward channels at the $n$-th shell $S_n$ connected to  $S_{n-1}$
and $\Phi_n$ as the forward channels at $S_n$ connected to $S_{n+1}$. 

Using the direct Hilbert sum structure given by the shells
$$
\ell^2(\GG)=\bigoplus_{n=0}^\infty \ell^2(S_n) \,\cong\, \bigoplus_{n=0}^\infty \CC^{s_n}\;.
$$
we write $\psi\in\ell^2(\GG)$ as direct sum
\begin{equation}
\psi\,=\,\bigoplus_{n=1}^\infty\psi_n \qtx{where} \psi\,=\,P_n^* \psi\,\in\,\CC^{s_n}\cong \ell^2(S_n)\;.
\end{equation}
Then, by \eqref{eq-cond-sph} and \eqref{eq-def-parts} we find with the convention $\Phi_{-1}^*\psi_{-1}=0$ that
\begin{equation}\label{eq-H}
(H\psi)_n\,=\,-\,\Phi_n \Upsilon_{n+1}^* \,\psi_{n+1}\,-\,\Upsilon_{n}\,\Phi_{n-1}^*\psi_{n-1}\,+\,V_n\,\psi_n\;.
\end{equation}

\subsection{Boundary resolvent data and transfer matrices}

We define the $m$ to $n$ boundary data, $m\leq n$, at the spectral parameter $z \not \in \spec(H_{m,n})$ by
\begin{equation}
R^z_{m,n}\,:=\,\pmat{ (P_{m,n}^* P_m \Upsilon_m)^* \\ (P_{m,n}^* P_n \Phi_n)^*} (H_{m,n}-z)^{-1}\,\pmat{P_{m,n}^* P_m \Upsilon_m & P_{m,n}^* P_n \Phi_n}
\end{equation} 
Note that $\Upsilon_m$ (or its column vectors) represent the modes connecting $S_m$ and therefore also $\GG_{m,n}$ to $S_{m-1}$ 
where as $\Phi_n$ represents the modes connecting $\GG_{m,n}$ to the next shell $S_{n+1}$.
$P_{m,n}^* P_m$ just represents the natural embedding of $\ell^2(S_m)$ into $\ell^2(\GG_{m,n})$ meaning that we consider the column-vectors of $\Upsilon_m$ and $\Phi_n$ now as vectors in $\ell^2(\GG_{m,n})$ for obtaining the resolvent data $R^z_{m,n}$.
This means, $\Upsilon_m$ is a $s_m\times r_m$ matrix and $P_{m,n}^* P_m \Upsilon_m=\smat{\Upsilon_m \\ \nul}$ is a $\#(\GG_{m,n}) \times r_m$ matrix, similarly $P_{m,n}^* P_n \Phi_n=\smat{\nul \\ \Phi_n}$ is a
$\#(\GG_{m,n}) \times r_{n+1}$ matrix. 
Hence, $R^z_{m,n}$ is a $(r_m+r_{n+1}) \times (r_m+r_{n+1})$ matrix and we 
assign to it its natural $(r_m, r_{n+1})$ splitting and define
$$
\alpha_{m,n}^z\,\in\,\CC^{r_m \times r_m}\;,\quad \beta^z_{m,n}\,\in\,\CC^{r_m \times r_{n+1}}\;,\quad
\gamma^z_{m,n}\,\in\,\CC^{r_{n+1} \times r_m}\;,\quad \delta^z_{m,n}\\,\in\,\CC^{r_{n+1}\times r_{n+1}}\;.
$$
by
\begin{equation}
R^z_{m,n}\,=:\,\pmat{\alpha^z_{m,n} & \beta^z_{m,n} \\ \gamma^z_{m,n} & \delta^z_{m,n}}\;.
\end{equation}
This means, e.g., that $\alpha^z_{m,n}=\Upsilon^*_m P_m^* P_{m,n} (H_{m,n}-z)^{-1} P_{m,n}^* P_m \Upsilon_m$. 
Also note the general relations
\begin{equation}
\alpha^{\bar z}_{m,n}\,=\,(\alpha^z_{m,n})^*\;, \quad
\gamma^{\bar z}_{m,n} = (\beta^z_{m,n})^*\;,\quad \delta^{\bar z}_{m,n}=(\delta^z_{m,n})^*\;.
\end{equation}
For $m=n$ we have the $n$-th shell boundary data
\begin{equation}
R^z_n\,=\,\pmat{\alpha^z_n & \beta^z_n \\ \gamma^z_n & \delta^z_n}\,:=\,\pmat{\Upsilon_n^* \\ \Phi_n^*}\,(V_n-z)^{-1}\,\pmat{\Upsilon_n & \Phi_n}\,.
\end{equation}
For some $z\in\spec(H_{m,n})$ one may define $R^z_{m,n}$ by analytic extension of $z\to R^z_{m,n}$.
Therefore, we define the first set of singular spectral parameters by
\begin{equation}
A^1_{m,n}\,:=\,\{\lambda\in \RR\,:\,R^\lambda_{m,n}\;\text{is not defined ater analytic extension}\;\}\;,\quad A^1_n\,:=\,A^1_{n,n}\;.
\end{equation}
So for $z\not \in A^1_{m,n}$ the matrix $R^z_{m,n}$ exists. Clearly, $A^1_{m,n}$ is a subset of the eigenvalues of $H_{m,n}$ and therefore finite.
We will make some additional assumptions:

\vspace{.2cm}

\noindent {\bf Assumptions:}\\
\noindent {\bf (A1):} $(r_n)_n$ is non-decreasing,
$
r_{n+1}\,\geq\;r_n\;.
$
\\
{\bf (A2):} We assume that for all $n\in\NN_0$ and almost all real $z=\lambda \in \RR$
 we have that the $r_n \times r_{n+1}$ matrix $\beta^\lambda_n$ has full rank $r_n$.

\vspace{.2cm}  
  
The condition (A2) in principle says that the matrix $V_n$ connects all modes of $\Upsilon_n$ and $\Phi_n$. This will be the case for typical operators like Schrödinger operators (adjacency+potential) on a connected graph.
In fact, if $\beta^z_n$ has full rank for some real $z=\lambda$, then by considering the rational function $z\mapsto \det( \beta^z_n (\beta^{\bar z}_n)^*)$ we see that it has full rank for all but finitely many $z\in \CC$.
The condition (A1) can always be obtained after grouping shells together.
If the $\liminf$ of $r_n$ is bounded one can group shells together such that the rank is (eventually) constant\footnote{one has a sub-sequence $n_k$ with 
$r_{n_k}=r=\liminf r_n$ and one can define new shells $S'_k:=\GG_{n_k,n_{k+1}-1}$ (defining $n_0=0$) to get a partition of $\GG$ with property \eqref{eq-cond-sph} and ranks $r'_k=r$.}. This is particularly the case for strip-like structures, but also for anti-trees. Otherwise, one can group the shells such that $r_n$ is increasing. Now, if one groups together $S_m$ up to $S_n$ (with $r_{n+1}\geq r_m$), then one needs
$r_i \geq r_m$ for any $m\leq i \leq n$ in order to have a chance of fulfilling (A2). But the grouping can always be done in this way.

Under assumptions (A1) and (A2) we define
$$
 A^2_{m,n}\,:=\,\{z\,\in\,\CC\,:\,\rank\,\beta^z_{m,n} < r_m\,\}\;,\; A_{m,n}\,:=\,A^1_{m,n}\cup A^2_{m,n}\;,\; A_n\,:=\,A_{n,n}\;.
$$  
With the assumptions above we immediately get that all $A_n$ are finite sets. We will show that any set $A_{m,n}$ is also finite.
We call $A_n$ the set of singular parameters at the shell $n$.
In the work on one-channel operators \cite{Sa-OC} we had $r_n=1$ for all $n$ and $\alpha^z_n$, $\beta^z_n$ and so on where numbers. In this case the transfer matrices featured some inverse of $\beta^z_n$.
As we have rectangular shaped matrices here, we need to make some different sense of this inverse.
If $\beta^z_n$ is of full rank, then because $r_n\leq r_{n+1}$ there exists an affine space of right-inverses which we denote by $\BB^z_n$, similarly we define $\BB^z_{m,n}$.
\begin{equation*}
\BB^z_{m,n}\,:=\,\{B\,\in\,\CC^{r_{n+1}\times r_m}\,:\, \beta^z_{m,n}\,B\,=\,\II_{r_m}\,\}\;,\quad
\BB^z_n\,:=\,\BB^z_{n,n}\,\subset\,\CC^{r_{n+1}\times r_n }\;,
\end{equation*}
where $\II_r=\diag(1,\ldots,1)$ denotes the $r\times r$ unit matrix. 
We furthermore define
\begin{equation*}
\BBb^z_{m,n}:=\,\{\bbb\,\in\,\CC^{r_{n+1}\times r_m}\,:\, \beta^z_{m,n}\,\bbb\,=\,\alpha^z_{m,n}\;\}\;,\quad
\BBb^z_n\,:=\,\BBb^z_{n,n}\,\subset\,\CC^{r_{n+1}\times r_n }\;.
\end{equation*}
If $\alpha^z_{m,n}$ is invertible, then one simply has $\BBb^z_{m,n} = \BB^z_{m,n} \alpha^z_{m,n}$ as sets, but if $\alpha^z_{m,n}$ is not invertible (extreme case $\alpha^z_{m,n}=\nul$) then  $\BBb^z_{m,n}$ may be a bigger set.
In fact, if $\beta^z_{m,n} B=\II$ and $\KK^z_{m,n}=\ker(\beta^z_{m,n})^{\otimes r_m}$ is the space of $r_{n+1} \times r_m $ matrices where each column vector is in the kernel of $\beta^z_{m,n}$, then
$\BB^z_{m,n}=B+\KK^z_{m,n}$ and $\BBb^z_{m,n}=B\alpha^z_{m,n} + \KK^z_{m,n}$.
All these sets are well defined for $z\not\in A_{m,n}$ and $\BB^z_{n},\; \BBb^z_{n}$ are defined for $z\not\in A_{n}$.
In analogy to the transfer matrix formulas for one-channel operators in \cite{Sa-OC} we define the sets of transfer matrices for $z\not \in A_{m,n}$ by
\begin{equation}
\TT^z_{m,n}\,:=\,\left\{\pmat{B & - \bbb   \\ \delta^z_{m,n} B  & \gamma^z_{m,n}-\delta^z_{m,n} \bbb }\,:\, B\,\in\,\BB^z_{m,n}\,,\;\bbb\,\in\,\BBb^z_{m,n} \right\}\;,
\;\;\TT^z_n\,:=\,\TT^z_{n,n}\;. \label{eq-tr-1}
\end{equation}
Note that $\TT^z_{m,n}$ is an affine subspace of $\CC^{2r_{n+1} \times 2 r_m}$, the set of $2r_{n+1} \times 2r_m$ matrices.
We also define some simpler affine subspaces for $z\not\in A_n$ by
\begin{equation}
\T^z_{m,n}\,:=\,\left\{\pmat{B & - B\alpha^z_{m,n} \\ \delta^z_{m,n} B & \gamma^z_{m,n}-\delta^z_{m,n} B \alpha^z_{m,n}}\,:\, B\,\in\,\BB^z_{m,n} \right\}\,\subset\,
\TT^z_{m,n}\;,\;\T^z_n\,:=\,\T^z_{n,n}\; \label{eq-tr-2}
\end{equation}
Furthermore, for $z\not\in A_{0,n}$  we define the set of boundary vectors at $n$ with initial Dirichlet condition
\begin{equation}
\D^z_{n}\,:=\,\left\{\pmat{B \\ \delta^z_{m,n} B}\,:\,B\,\in\,\BB^z_{0,n} \right\}\,=\, \TT^z_{0,n} \pmat{1 \\ 0} \,=\,\T^z_{0,n}\pmat{1\\0} \subset\,\CC^{2r_{n+1} \times 1}\,\equiv\,\CC^{2r_{n+1}}\;
\end{equation}
as well as the set of vectors with initial von Neumann condition
\begin{equation}
\N^z_{n}\,:=\,\left\{\pmat{-\bbb \\ \gamma^z_n-\delta^z_{m,n} \bbb}\,:\,\bbb\,\in\,\BBb^z_{0,n} \right\}\,=\, 
\TT^z_{0,n} \pmat{0 \\ 1} \,\subset\,\CC^{2r_{n+1} \times 1}\,\equiv\,\CC^{2r_{n+1}}\;.
\end{equation}
Recall that $r_0=1$ and therefore $\TT^z_{0,n},\,\T^z_{0,n}$ are sets of $2r_{n+1} \times 2$ matrices.

\begin{rem} \label{rem-T}
{\rm (i)} Let us quickly note, that these matrices are a generalization of the transfer matrices for block-Jacobi operators.
For instance, take $S_n=\{n\}\times S$ on $\GG=\ZZ_+\times S$ with $\#(S)=s$ finite, $\psi_n\in \ell^2(S)\equiv \CC^s$ and an operator of the form
$$
(H\psi)_n\,=\,-\psi_{n-1}-\psi_{n+1}+V_n \psi_n\,,\quad n\geq 0,\quad \psi_{-1}=0\,.
$$
Then, for $n\geq 1$ we can select $\Upsilon_n=\Phi_n=\II_s$ and $\Phi_0=\II_s$ as well.
This leads to $\alpha^z_n=\beta^z_n=\gamma^z_n=\delta^z_n= (V_n-z\II_s)^{-1}$ for $n\geq 1$ and there is only one choice, $B= (\beta^z_n)^{-1}$ and 
$\bbb=(\beta^z_n)^{-1} \alpha^z_n$ leading to
$$
\TT^z_n=\{T^z_n\}\,,\quad T^z_n=\pmat{(\beta^z_n)^{-1} & -(\beta^z_n)^{-1} \alpha^z_n \\ \delta^z_n (\beta^z_n)^{-1}& \gamma^z_n-\delta^z_n(\beta^z_n)^{-1} \alpha^z_n}\,=\,\pmat{V_n-z\II_s & -\II_s \\ \II_s & \nul}
$$
which is the standard $n$-th transfer matrix for such a block Jacobi operator.
Only for $n=0$ we would have some different set of $2s \times 2$ transfer matrices, depending on the selection of the vector $\Upsilon_0\in\CC^s \equiv \ell^2(S_0)$.\\
{\rm (ii)} Let us also note that the transfer matrices appearing as products $A_i B_i$ in \cite{FHS} are elements of these defined sets $\TT^z_i$ of transfer matrices. More precisely, there  we have the special choices $\Phi_n=\II$ and $\Upsilon_n=-W_n$ for all $n$, which are supposed to be full rank so that
$\Lambda_n\,:=\,\Upsilon_n (\Upsilon_n^* \Upsilon_n)^{-1}$ exists, giving $\Upsilon_n^* \Lambda_n=\II$. Then, choose $B=(V_n-z\II)\Lambda_n$ and $\bbb=\Upsilon_n$ giving
the special transfer matrix $T^z_n\,=\,\smat{(V_n-z\II)\Lambda_n & - \Upsilon_n \\ \Lambda_n & \nul}$ and we find 
$$ \TT^z_n=\left\{ \pmat{(V_n-z\II)(\Lambda_n+K_1) & -\Upsilon_n +(V_n-z\II)K_2 \\ \Lambda_n+K_1 & K_2}\,:\,\Upsilon_n^*K_i=\nul\right\}\,.$$
\end{rem}

For subsets of matrices $\TT_1, \TT_2$ we denote $\TT_2 \TT_1:=\{T_2T_1\,:\,T_i\in\TT_i\}$ if the matrices can be multiplied this way.
We now define some further sets of singular spectral parameters.
For $l\leq m < n$ let
\begin{equation}
B_{l,m,n}\,:=\,\left\{\lambda \in \RR\,:\, \II_{r_{m+1}}\,-\,\alpha^\lambda_{m+1,n} \delta^\lambda_{l,m}\;\;\text{is not invertible}\;\right\}
\end{equation}
which is a finite set.
This can be easily seen as the determinant is a non-zero rational function as  for $\im(z)>0$ the inverse
$$
\left(\II_{r_{m+1}}\,-\,\alpha^z_{m+1,n} \delta^z_{l,m}\right)^{-1}\,=\,
(\delta^z_{l,m})^{-1} \left((\delta^z_{l,m})^{-1}-\alpha^z_{m+1,n}\right)
$$
exists, using $\Im(\delta^z_{l,m})^{-1}<0$ and $\Im(\alpha^z_{m+1,n})>0$ where 
$\Im(M)=(M-M^*)/(2i)$ is the imaginary part in the $C^*$ algebra sense.

\begin{prop}\label{prop-1}
Under assumptions {\rm (A1)} and {\rm (A2)} we have the following:\\
{\rm (i)} For all $l,n\in\NN_0$, $l\leq n$ the set $A_{l,n}$ is finite.\\
{\rm (ii)} For $0\leq l \leq m \leq n-1$, $l,m,n\in\NN_0$ and
$z\not \in A_{l,m} \cup A_{m+1,n} \cup B_{l,m,n}$ we find $z\not \in A_{l,n}$ and
$$
\TT^z_{m+1,n}\,\TT^z_{l,m}\,\subset\, \TT^z_{l,n}\;,\quad
\TT^z_{m+1,n} \TT_{l,m} \pmat{\nul \\ \II_{r_l}}\,=\,\TT^z_{l,n} \pmat{\nul \\ \II_{r_l}}\,.
$$
as well as
$$
\TT^z_{m+1,n} \TT_{l,m} \pmat{\II_{r_l} \\ \nul}\,=\,\T^z_{m+1,n} \T^z_{l,m} 
\pmat{\II_{r_l} \\ \nul} \,=\, \TT^z_{l,n} \pmat{\II_{r_l} \\ \nul}\,.
$$
{\rm (iii)}
Particularly, we find for $z\not \in A_{0,m} \cup A_{m+1,n} \cup B_{0,m,n}$, that
$$
\TT^z_{m+1,n} \D^z_{m}\,=\,\T^z_{m+1,n} \D^z_m \,=\,\D^z_n\qtx{and}
\TT^z_{m+1,n} \N^z_m\,=\,\N^z_n
$$
{\rm (iv)} For fixed $n\in \NN$ and all but finitely many $z\in \CC$ we have
$$
\D^z_n\,=\,\TT^z_n \TT^z_{n-1}\cdots \TT^z_1 \TT^z_0 \pmat{1\\0}\,=\, \T^z_n \T^z_{n-1}\cdots \T^z_1 \T^z_0 \pmat{1\\0}\;.
$$
\end{prop}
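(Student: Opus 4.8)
The engine behind all four parts is a \emph{gluing formula} expressing the boundary resolvent matrix $R^z_{l,n}$ of a concatenated block $\GG_{l,n}=\GG_{l,m}\sqcup\GG_{m+1,n}$ through $R^z_{l,m}$ and $R^z_{m+1,n}$; everything else is bookkeeping. Granting the formula, part (i) follows by induction on the interval length $n-l$, the base case $A_n$ finite being already recorded under (A1) and (A2): the gluing formula will exhibit $\beta^z_{l,n}$ as a product of a full-row-rank, an invertible, and a full-row-rank matrix, hence of full rank $r_l$, and will show $R^z_{l,n}$ has no pole, for $z$ outside the finite set $A_{l,m}\cup A_{m+1,n}\cup B_{l,m,n}$; thus $z\notin A_{l,n}$ there, so $A_{l,n}$ is finite. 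Part (ii) is then a direct computation with the block form of the transfer matrices. Part (iii) is the case $l=0$ of (ii), using $\D^z_m=\TT^z_{0,m}\smat{1\\0}$ and $\N^z_m=\TT^z_{0,m}\smat{0\\1}$. Part (iv) is the iteration of (iii) along $m=k-1$, $k=1,\dots,n$, where the exceptional set is the finite union $\bigcup_{k}\bigl(A_{0,k-1}\cup A_{k}\cup B_{0,k-1,k}\bigr)$, together with $\D^z_0=\TT^z_0\smat{1\\0}=\T^z_0\smat{1\\0}$ (both equal $\{\smat{B\\ \delta^z_0 B}:B\in\BB^z_0\}$).

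\textbf{The gluing formula.} Decomposing $\ell^2(\GG_{l,n})=\ell^2(\GG_{l,m})\oplus\ell^2(\GG_{m+1,n})$, the matrix $H_{l,n}$ has $2\times2$ block form with diagonal blocks $H_{l,m}$, $H_{m+1,n}$ and off-diagonal block $C$ which, by \eqref{eq-cond-sph}, is supported only on the interface $S_m$--$S_{m+1}$ and equals, up to the natural embeddings and projections, $W_{m+1}=-\Upsilon_{m+1}\Phi_m^*$. Hence the two Schur self-energies have rank at most $r_{m+1}$ and live on a single shell: $C^*(H_{m+1,n}-z)^{-1}C$ acts on $S_m$ as $\Phi_m\,\alpha^z_{m+1,n}\,\Phi_m^*$ and $C(H_{l,m}-z)^{-1}C^*$ acts on $S_{m+1}$ as $\Upsilon_{m+1}\,\delta^z_{l,m}\,\Upsilon_{m+1}^*$. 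Applying the block inverse formula and the Sherman--Morrison--Woodbury identity to $(H_{l,n}-z)^{-1}$ and compressing to the modes $\Upsilon_l$ on $S_l$ and $\Phi_n$ on $S_n$ gives, with $M:=(\II_{r_{m+1}}-\alpha^z_{m+1,n}\delta^z_{l,m})^{-1}$ and $\widetilde M:=(\II_{r_{m+1}}-\delta^z_{l,m}\alpha^z_{m+1,n})^{-1}$,
$$\beta^z_{l,n}=\beta^z_{l,m}\,M\,\beta^z_{m+1,n},\qquad \gamma^z_{l,n}=\gamma^z_{m+1,n}\,\widetilde M\,\gamma^z_{l,m},$$
$$\alpha^z_{l,n}=\alpha^z_{l,m}+\beta^z_{l,m}\,M\,\alpha^z_{m+1,n}\,\gamma^z_{l,m},\qquad \delta^z_{l,n}=\delta^z_{m+1,n}+\gamma^z_{m+1,n}\,\widetilde M\,\delta^z_{l,m}\,\beta^z_{m+1,n}.$$
I would establish these first for $\Im z>0$, where all resolvents exist and, by the imaginary-part signs recorded before the statement, $M$ and $\widetilde M$ exist as well; both sides are rational in $z$, so they agree off $A_{l,m}\cup A_{m+1,n}\cup B_{l,m,n}$. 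Since $\beta^z_{l,m}$, $\beta^z_{m+1,n}$ have full row rank there and $M$ is invertible, $\beta^z_{l,n}$ has full rank $r_l$, and the explicit rational form of the four blocks shows $R^z_{l,n}$ has no pole; this yields $z\notin A_{l,n}$ and, by induction on $n-l$, part (i).

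\textbf{Composition of transfer matrices.} For $T_1=\smat{B_1&-\bbb_1\\ \delta^z_{l,m}B_1&\gamma^z_{l,m}-\delta^z_{l,m}\bbb_1}\in\TT^z_{l,m}$ and $T_2=\smat{B_2&-\bbb_2\\ \delta^z_{m+1,n}B_2&\gamma^z_{m+1,n}-\delta^z_{m+1,n}\bbb_2}\in\TT^z_{m+1,n}$, block multiplication shows $T_2T_1$ has top row $(B,-\bbb)$ with $B:=(B_2-\bbb_2\delta^z_{l,m})B_1$ and $\bbb:=(B_2-\bbb_2\delta^z_{l,m})\bbb_1+\bbb_2\gamma^z_{l,m}$. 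Using $\beta^z_{m+1,n}(B_2-\bbb_2\delta^z_{l,m})=\II_{r_{m+1}}-\alpha^z_{m+1,n}\delta^z_{l,m}=M^{-1}$, the gluing formulas for $\beta^z_{l,n}$ and $\alpha^z_{l,n}$ give $\beta^z_{l,n}B=\II_{r_l}$ and $\beta^z_{l,n}\bbb=\alpha^z_{l,n}$, so $B\in\BB^z_{l,n}$, $\bbb\in\BBb^z_{l,n}$; and using in addition $\widetilde M\,\delta^z_{l,m}=\delta^z_{l,m}\,M$ together with the gluing formulas for $\gamma^z_{l,n}$, $\delta^z_{l,n}$, the two bottom blocks of $T_2T_1$ collapse exactly to $\delta^z_{l,n}B$ and $\gamma^z_{l,n}-\delta^z_{l,n}\bbb$. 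Hence $T_2T_1\in\TT^z_{l,n}$, which is the inclusion $\TT^z_{m+1,n}\TT^z_{l,m}\subset\TT^z_{l,n}$ (and re-confirms $z\notin A_{l,n}$). The two stated equalities are the reverse inclusions after multiplying by $\smat{\II_{r_l}\\0}$ (keeping only the $B$'s) and $\smat{0\\ \II_{r_l}}$ (keeping only the $-\bbb$'s): since $\BB^z_{l,n}$, $\BBb^z_{l,n}$ are affine spaces modelled on $\KK^z_{l,n}=\ker(\beta^z_{l,n})^{\otimes r_l}$ and the inclusion already yields one point of each, one checks that varying $B_1$ over its affine space (contributing $(B_2-\bbb_2\delta^z_{l,m})\KK^z_{l,m}$, injectively since $B_2-\bbb_2\delta^z_{l,m}$ has full column rank $r_{m+1}$) and $B_2$ over its affine space (contributing $\{k_2B_1:k_2\in\KK^z_{m+1,n}\}$) already spans $\KK^z_{l,n}$; this is a dimension count from $\dim\ker\beta^z_{l,n}=r_{n+1}-r_l$ and the full-rank properties of the $\beta$'s, and the same count shows the reduced family with $\bbb_1=B_1\alpha^z_{l,m}$, $\bbb_2=B_2\alpha^z_{m+1,n}$, for which $(B_2-B_2\alpha^z_{m+1,n}\delta^z_{l,m})B_1=B_2M^{-1}B_1$, already suffices on $\smat{\II_{r_l}\\0}$, giving the $\T^z$ version. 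Parts (iii) and (iv) then follow as in the overall plan.

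\textbf{Main difficulty.} Once the gluing formula is in hand the transfer-matrix identities are forced and essentially mechanical, so the real work sits in two places. First, the derivation of the gluing formula itself: one must track the embeddings and projections carefully so the Schur self-energies are correctly recognised as single-shell, rank-$r_{m+1}$ perturbations, and apply Woodbury in the form where the inverted matrix is $\II-\alpha^z_{m+1,n}\delta^z_{l,m}$ — which is exactly why $B_{l,m,n}$ was introduced. Second, and in my judgement the genuine obstacle, the surjectivity/dimension count in the two equality statements of (ii): the inclusion is a one-line formal computation, but verifying that the products $(B_2-\bbb_2\delta^z_{l,m})B_1$, respectively the corresponding $\bbb$'s, exhaust \emph{all} of $\BB^z_{l,n}$, $\BBb^z_{l,n}$ — and that the smaller family $\T^z$ already does so on the Dirichlet part — requires careful linear algebra with rectangular matrices whose sizes obey $r_l\le r_{m+1}\le r_{n+1}$, and is where the monotonicity in assumption (A1) is essential.
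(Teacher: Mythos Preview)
Your approach is essentially the paper's own: the ``gluing formula'' you derive via Schur/Woodbury is exactly the paper's $\trr$ operation (Definition~\ref{def-1}, Lemma~\ref{lem-id-trr}, Proposition~\ref{prop-bd}), and your block computation showing $T_2T_1\in\TT^z_{l,n}$ is the content of Proposition~\ref{prop-lin}. The paper just packages things more abstractly, proving the statements for any $Q\in\Mm_\trr(q,r)$, $R\in\Mm_\trr(r,s)$ and then specialising to $Q=R^z_{l,m}$, $R=R^z_{m+1,n}$; your direct route is equally valid and arguably more transparent for a first reading.

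There is, however, one genuine gap in your surjectivity argument. Your dimension count ``vary $B_1$, vary $B_2$'' is correct for the Dirichlet equality $\TT^z_{m+1,n}\TT^z_{l,m}\smat{\II\\ \nul}=\TT^z_{l,n}\smat{\II\\ \nul}$, since $B=(B_2-\bbb_2\delta^z_{l,m})B_1$ indeed depends on $B_1,B_2$ and your span argument goes through. But for the Neumann equality, $\bbb=(B_2-\bbb_2\delta^z_{l,m})\bbb_1+\bbb_2\gamma^z_{l,m}$ does \emph{not} depend on $B_1$ at all, so ``varying $B_1$'' contributes nothing. You must instead vary $\bbb_1$ over $\BBb^z_{l,m}$ (which gives the same $(B_2-\bbb_2\delta^z_{l,m})\KK^z_{l,m}$ piece) and $B_2$ over $\BB^z_{m+1,n}$ (which gives $\{k_2\bbb_1:k_2\in\KK^z_{m+1,n}\}$). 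The problem is that $\bbb_1$ need not have full column rank when $\alpha^z_{l,m}$ is singular, so the second piece may be too small and your dimension count fails. The paper's fix (inside the proof of Proposition~\ref{prop-lin}) is to observe that, as \emph{sets}, $(\tilde\BB-\tilde\bbb\delta)\BBb=\tilde\BB(\II-\tilde\alpha\delta)\BBb$, then invoke Proposition~\ref{prop-A1} on right inverses of a product together with a limiting argument $\alpha\leftarrow\alpha+\varepsilon\II$ (invertible for small $\varepsilon\neq0$) to conclude $(\tilde\BB-\tilde\bbb\delta)\BBb=\{X:\hat\beta X=\alpha\}$. You correctly flagged this spot as the main difficulty; the missing ingredient is precisely this perturbation trick to handle degenerate~$\alpha^z_{l,m}$.
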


Before coming to the spectral theory, let us mention that we have some symplectic structure as in the Jacobi case.

\begin{defini}\label{def-HSP}
First, by $J_m$ we denote the standard $2m \times 2m$ symplectic matrix, this means
$$
J_m\,:=\,\pmat{\nul & -\II_m \\ \II_m & \nul}
$$
where $\II_m$ denotes the unit $m \times m$ matrix, as before.
We define the Hermitian-symplectic partial semi-group $\HSP$ and the symplectic partial semi-group
$\SP$ by
$$
\HSP:=\bigcup_{\substack{m\geq n \\ m,n\in\NN}}\{ T\in \CC^{2m \times 2n}\,:\, T^* J_m T=J_n\}\,,\;\;
\SP:=\bigcup_{\substack{m\geq n \\ m,n\in\NN}}\{ T\in \CC^{2m \times 2n}\,:\, T^\top J_m T=J_n\}
$$
\end{defini}

Note,  if $T_1, T_2 \in \HSP$ or $\SP$ are such that one can multiply $T_1$ with $T_2$ from the left, then $T_2 T_1\in \HSP$ or $\SP$ as well.
Note,  ${\rm HSP}(2m)=\HSP \cap \CC^{2m\times 2m}$ is the Hermitian-symplectic group, ${\rm SP}(2m,\CC):=\SP\cap \CC^{2m\times 2m}$ is the complex symplectic group and
${\rm Sp}(2m,\RR):=\HSP\cap\SP\cap \CC^{2m\times 2m}$ is the real symplectic group of $2m\times 2m$ matrices.

For Jacobi and block-Jacobi operators it is well known that we have a symplectic structure for the transfer matrices. Here, there is something similar:

\begin{prop}\label{prop-2}
{\rm (i)} For $z=\lambda \in \RR \setminus A_{m,n}$ we have $\TT^\lambda_{m,n}\subset \HSP$. Moreover, for any $T_1, T_2 \in \TT^\lambda_{m,n}$ we find
$T_1^* J_{r_{n+1}} T_2\,=\, J_{r_m}$.
\\
{\rm (ii)} If all $V_n$ and all $\Upsilon_n$ and $\Phi_n$ for $n\in\NN_0$ are real matrices, then we have for $z\in\CC \setminus A_{m,n}$ that $ \TT^z_{m,n} \subset \SP$. Moreover, for any $T_1, T_2 \in \TT^\lambda_{m,n}$ we find $T_1^\top J_{r_{n+1}} T_2= J_{r_m}$.
\end{prop}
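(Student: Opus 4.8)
The plan is to derive both claims from a single block computation, the complex-linear-algebra analogue of the classical fact that the transfer matrices of (block-)Jacobi operators are symplectic. The only inputs are the defining relations of the affine spaces $\BB^z_{m,n}$ and $\BBb^z_{m,n}$ --- namely $\beta^z_{m,n}B=\II_{r_m}$ for $B\in\BB^z_{m,n}$ and $\beta^z_{m,n}\bbb=\alpha^z_{m,n}$ for $\bbb\in\BBb^z_{m,n}$ --- together with the (anti)symmetry relations among the blocks of $R^z_{m,n}$.

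For part~(i), fix $\lambda\in\RR\setminus A_{m,n}$. Since $\lambda\notin A^2_{m,n}$ the matrix $\beta:=\beta^\lambda_{m,n}$ has full rank $r_m$, and by {\rm (A1)} $r_{n+1}\geq r_m$, so $\BB^\lambda_{m,n}$ and $\BBb^\lambda_{m,n}$ are nonempty and all blocks $\alpha,\beta,\gamma,\delta$ of $R^\lambda_{m,n}$ are defined. Evaluating $\alpha^{\bar z}_{m,n}=(\alpha^z_{m,n})^*$, $\gamma^{\bar z}_{m,n}=(\beta^z_{m,n})^*$, $\delta^{\bar z}_{m,n}=(\delta^z_{m,n})^*$ at $z=\lambda$ gives $\alpha^*=\alpha$, $\delta^*=\delta$, $\gamma=\beta^*$. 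I would then take arbitrary $T_i=\smat{B_i & -\bbb_i \\ \delta B_i & \gamma-\delta\bbb_i}\in\TT^\lambda_{m,n}$, $i=1,2$, and compute the four blocks of $T_1^*J_{r_{n+1}}T_2$. Using $T_1^*=\smat{B_1^* & B_1^*\delta \\ -\bbb_1^* & \beta-\bbb_1^*\delta}$ and $J_{r_{n+1}}T_2=\smat{-\delta B_2 & -\gamma+\delta\bbb_2 \\ B_2 & -\bbb_2}$, the $(1,1)$ block is $-B_1^*\delta B_2+B_1^*\delta B_2=\nul$, the $(1,2)$ block is $-B_1^*\gamma=-(\beta B_1)^*=-\II_{r_m}$, the $(2,1)$ block is $\beta B_2=\II_{r_m}$, and the $(2,2)$ block is $\bbb_1^*\gamma-\beta\bbb_2=(\beta\bbb_1)^*-\beta\bbb_2=\alpha^*-\alpha=\nul$. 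Hence $T_1^*J_{r_{n+1}}T_2=J_{r_m}$, and specializing $T_1=T_2=T$ gives $T^*J_{r_{n+1}}T=J_{r_m}$, i.e.\ $\TT^\lambda_{m,n}\subset\HSP$.

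For part~(ii), assume all $V_n,\Upsilon_n,\Phi_n$ are real. Then $H_{m,n}$ is a real symmetric matrix, so $(H_{m,n}-z)^{-1}$ is complex-symmetric for every $z\notin\spec(H_{m,n})$, while the embedded matrices $P_{m,n}^*P_m\Upsilon_m$ and $P_{m,n}^*P_n\Phi_n$ are real; transposing the definition of $R^z_{m,n}$ then yields $(\alpha^z)^\top=\alpha^z$, $(\delta^z)^\top=\delta^z$, $(\gamma^z)^\top=\beta^z$, first for $z\notin\spec(H_{m,n})$ and then, since both sides are rational, for all $z\in\CC\setminus A_{m,n}$ by analytic continuation. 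Repeating verbatim the block computation of part~(i) with $\top$ in place of $*$ gives $T_1^\top J_{r_{n+1}}T_2=J_{r_m}$ for all $T_1,T_2\in\TT^z_{m,n}$, hence $\TT^z_{m,n}\subset\SP$.

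I do not expect a genuine obstacle: the computation is routine linear algebra. The two points that deserve a word of care are (a) that $B$ and $\bbb$ only range over affine spaces, so the natural formulation is the polarized identity $T_1^*J T_2=J$ --- the ``kernel directions'' $\KK^z_{m,n}$ drop out automatically because $\beta$ annihilates them, which is exactly why the off-diagonal and $(2,2)$ blocks collapse --- and (b) that all the block identities and symmetry relations invoked are simultaneously valid precisely on the complement of $A_{m,n}$, which is finite by Proposition~\ref{prop-1}(i).
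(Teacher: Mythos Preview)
Your proof is correct and follows essentially the same approach as the paper: both reduce to the direct block computation of $T_1^* J T_2$ (resp.\ $T_1^\top J T_2$) using $\alpha^*=\alpha$, $\delta^*=\delta$, $\gamma=\beta^*$ (resp.\ the transposed analogues) together with the defining relations $\beta B_i=\II$ and $\beta\bbb_i=\alpha$. The paper packages this as an abstract statement about $\TT_Q$ for any Hermitian (resp.\ complex-symmetric) $Q\in\Mm_\trr(q,r)$ and then specializes to $Q=R^z_{m,n}$, but the calculation is line-for-line the same as yours.
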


\begin{rem}
If we have a real operator $H$, then $V_n$ are real and we can choose all $\Upsilon_n,\,\Phi_n$ to be real valued. In this case, for real energies $z=\lambda\in\RR$ the transfer matrix sets are subsets of $\HSP\cap\SP$. Now, as mentioned above, the square matrices in this set are part of a real symplectic group and particularly real. In $\HSP\cap \SP$ the non-square matrices are not necessarily real. However, in this particular case, it would be sufficient to restrict to the set of real transfer matrices.
\end{rem}

Like in the Jacobi case and the one-channel case there is also a connection to solutions of the formal eigenvalue equation. However, we need to exclude a further set of spectral parameters which is when $\bar z \in A_n$ for some $n$. So on the real line we do not have any additional exclusions.

\begin{prop}\label{prop-3}
{\rm (i)}
Let $z, \bar z \not \in \bigcup_{n=0}^\infty A_n$  and let $\Psi=(\Psi_n)_{n=0}^\infty \in \prod_{n=0}^\infty \CC^{S_n} \,=\,\CC^\GG$ be a formal solution of 
$H\Psi=z\Psi+v P_0 \Upsilon_0$, i.e.
$(H\Psi)_n=z \Psi_n$ for $n\geq 1$ and $(H\psi)_0=z\Psi_0+ v \Upsilon_0$ with $v \in \CC$. 
Assume that $v\neq 0$ or $u:=\Upsilon_0^* \Psi_0 \neq 0$.
Then, there exist $T^z_n \in \TT^z_n$, $n\in \NN_0$, such that
$$
\pmat{\Upsilon_{1}^* \Psi_{1} \\ \Phi_0^* \Psi_0 }\,=\, T^z_0 \pmat{u \\ v }\qtx{and}
\pmat{\Upsilon_{n+1}^* \Psi_{n+1} \\ \Phi_n^* \Psi_n}\,=\,T^z_n \,\pmat{\Upsilon_{n}^* \Psi_{n} \\ \Phi_{n-1}^* \Psi_{n-1}}\,.
$$
In particular, we find
$$
\pmat{\Upsilon_{n+1}^* \Psi_{n+1} \\ \Phi_n^* \Psi_n }\,=\, T^z_n T^z_{n-1} \cdots T^z_0\, \pmat{\Upsilon_0^* \Psi_0 \\ v }\,.
$$
{\rm (ii)} Let  $z\not \in \bigcup_{n=0}^\infty A_n $, let be given a vector $\smat{u\\v}\in \CC^2$ and a selection of transfer matrices $T^z_n \in \TT^z_n$. Then, there exists a formal solution $\Psi=(\Psi_n)_{n=0}^\infty \in \prod_{n=0}^\infty \CC^{S_n}=\CC^\GG$ of
$H\Psi=z\Psi+v P_0 \Upsilon_0$ such that
$$
\Upsilon_0^* \Psi_0\,=\,u \qtx{and} \pmat{\Upsilon_{n+1}^* \Psi_{n+1} \\ \Phi_n^* \Psi_n }\,=\, T^z_n T^z_{n-1} \cdots T^z_0\, \pmat{u \\ v }\,.
$$
\end{prop}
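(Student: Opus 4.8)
The plan is to prove (i) and (ii) together by setting up the recursion in the "boundary vectors" $\smat{\Upsilon_{n+1}^*\Psi_{n+1}\\ \Phi_n^*\Psi_n}$ and reading off from \eqref{eq-H} which matrices in the affine sets $\TT^z_n$ realise the step. First I would fix $n\ge 1$ and project the formal equation $(H\Psi)_n=z\Psi_n$ onto $\Upsilon_n$ and onto $\Phi_n$. Using \eqref{eq-H}, the equation $(H\Psi)_n=z\Psi_n$ reads $(V_n-z)\Psi_n=\Phi_n\Upsilon_{n+1}^*\Psi_{n+1}+\Upsilon_n\Phi_{n-1}^*\Psi_{n-1}$. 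Since $z\notin A_n$ (so $z\notin A^1_n$), $V_n-z$ is invertible, giving $\Psi_n=(V_n-z)^{-1}\big(\Phi_n\,\Upsilon_{n+1}^*\Psi_{n+1}+\Upsilon_n\,\Phi_{n-1}^*\Psi_{n-1}\big)$. Now contract with $\Upsilon_n^*$ and with $\Phi_n^*$ and insert the definitions $\alpha_n^z=\Upsilon_n^*(V_n-z)^{-1}\Upsilon_n$, $\beta_n^z=\Upsilon_n^*(V_n-z)^{-1}\Phi_n$, $\gamma_n^z=\Phi_n^*(V_n-z)^{-1}\Upsilon_n$, $\delta_n^z=\Phi_n^*(V_n-z)^{-1}\Phi_n$. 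Writing $a_n:=\Upsilon_n^*\Psi_n$, $b_n:=\Phi_n^*\Psi_n$, the $\Upsilon_n^*$-contraction gives $a_n=\beta_n^z\,a_{n+1}+\alpha_n^z\,b_{n-1}$ and the $\Phi_n^*$-contraction gives $b_n=\delta_n^z\,a_{n+1}+\gamma_n^z\,b_{n-1}$.

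The second equation expresses $b_n$ in terms of $a_{n+1}$ and $b_{n-1}$ directly. To solve the first for $a_{n+1}$ I use that $z\notin A_n$ means $\beta_n^z$ has full rank $r_n$ (this is exactly $z\notin A^2_n$, via assumptions (A1), (A2)), so the solution set of $\beta_n^z\,a_{n+1}=a_n-\alpha_n^z b_{n-1}$ is nonempty: pick any right inverse $B\in\BB_n^z$ and any $\bbb\in\BBb_n^z$, then $a_{n+1}=B\,a_n-\bbb\,b_{n-1}$ is one solution (since $\beta_n^z B=\II_{r_n}$ and $\beta_n^z\bbb=\alpha_n^z$), and conversely every solution has this form because the ambiguity lies in $\ker\beta_n^z$, which is precisely the freedom already built into $\BB_n^z$ and $\BBb_n^z$ (recall $\BB_n^z=B+\KK_n^z$, $\BBb_n^z=B\alpha_n^z+\KK_n^z$). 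Then $b_n=\delta_n^z(B\,a_n-\bbb\,b_{n-1})+\gamma_n^z b_{n-1}=\delta_n^z B\,a_n+(\gamma_n^z-\delta_n^z\bbb)\,b_{n-1}$. Stacking, $\smat{a_{n+1}\\ b_n}=\smat{B & -\bbb\\ \delta_n^z B & \gamma_n^z-\delta_n^z\bbb}\smat{a_n\\ b_{n-1}}$, and the matrix on the right is by definition \eqref{eq-tr-1} an element $T_n^z\in\TT_n^z$. This is the recursion in the proposition. The base step $n=0$ is handled identically: $(H\Psi)_0=z\Psi_0+v\Upsilon_0$ gives $(V_0-z)\Psi_0=\Phi_0\,\Upsilon_1^*\Psi_1+v\Upsilon_0$, hence $\Psi_0=(V_0-z)^{-1}(\Phi_0 a_1+v\Upsilon_0)$; contracting with $\Upsilon_0^*$ and $\Phi_0^*$ and recalling $r_0=1$, $\Upsilon_0$ is a unit vector so $\alpha_0^z=\Upsilon_0^*(V_0-z)^{-1}\Upsilon_0$, one gets $u=a_0=\beta_0^z a_1+\alpha_0^z v$ and $b_0=\delta_0^z a_1+\gamma_0^z v$, and solving for $a_1$ as above produces $\smat{a_1\\ b_0}=T_0^z\smat{u\\ v}$ with $T_0^z\in\TT_0^z$. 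Composing these steps yields the stated product formula; this proves (i), where the hypothesis $v\neq0$ or $u\neq0$ is only needed to know the recursion is non-trivial (if both vanish one can still take all $b_{-1}:=v$ and the formula holds with zero input). The role of the extra exclusion $\bar z\notin\bigcup A_n$ in (i): it guarantees, via the relation $\gamma_n^{\bar z}=(\beta_n^z)^*$, that the contractions are consistent — more precisely it is used to ensure $\Psi_n$ is genuinely recovered from $(a_{n+1},b_{n-1})$, i.e. that the map $\Psi_n\mapsto(\Upsilon_n^*\Psi_n,\Phi_n^*\Psi_n)$ together with the equation determines everything; I would spell this out by noting $\Psi_n\in\ran\big((V_n-z)^{-1}[\Upsilon_n\ \Phi_n]\big)$ and that on this range the pair of contractions is injective precisely when $\bar z\notin A_n$.

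For part (ii) I would run the construction backwards. Given $\smat{u\\v}$ and the chosen $T_n^z\in\TT_n^z$, define $\smat{a_{n+1}\\ b_n}:=T_n^z\cdots T_0^z\smat{u\\v}$ for $n\ge0$, set $a_0:=u$, $b_{-1}:=v$, and then \emph{define} $\Psi_n:=(V_n-z)^{-1}\big(\Phi_n a_{n+1}+\Upsilon_n b_{n-1}\big)$ for $n\ge1$ and $\Psi_0:=(V_0-z)^{-1}(\Phi_0 a_1+\Upsilon_0 v)$. One must check two consistency conditions: that $\Upsilon_n^*\Psi_n=a_n$ and $\Phi_{n-1}^*\Psi_{n-1}=b_{n-1}$ hold (so that the $\Psi_n$ fit together), and that with these definitions $(H\Psi)_n=z\Psi_n$ for $n\ge1$ and $(H\Psi)_0=z\Psi_0+v\Upsilon_0$. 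The first follows from the defining relations of $\TT_n^z$: writing $T_n^z=\smat{B&-\bbb\\ \delta_n^z B&\gamma_n^z-\delta_n^z\bbb}$, the top block gives $a_{n+1}=B a_n-\bbb b_{n-1}$, so $\beta_n^z a_{n+1}=\beta_n^z B a_n-\beta_n^z\bbb b_{n-1}=a_n-\alpha_n^z b_{n-1}$, i.e. $\Upsilon_n^*\Psi_n=\beta_n^z a_{n+1}+\alpha_n^z b_{n-1}=a_n$; similarly the bottom block of $T_{n-1}^z$ gives $b_{n-1}=\delta_{n-1}^z a_n+\gamma_{n-1}^z b_{n-2}=\Phi_{n-1}^*\Psi_{n-1}$. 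The second consistency is then just $(V_n-z)\Psi_n=\Phi_n a_{n+1}+\Upsilon_n b_{n-1}=\Phi_n\Upsilon_{n+1}^*\Psi_{n+1}+\Upsilon_n\Phi_{n-1}^*\Psi_{n-1}$, which by \eqref{eq-H} is exactly $(H\Psi)_n=z\Psi_n$ (and the analogous identity at $n=0$ with the source term $v\Upsilon_0$).

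The main obstacle I anticipate is not the algebra of the recursion — that is a direct unwinding of \eqref{eq-H} and \eqref{eq-tr-1} — but the careful bookkeeping around the \emph{non-uniqueness}: in (i) one must argue that \emph{some} choice of $(B,\bbb)$ in $\BB_n^z\times\BBb_n^z$ reproduces the given solution, which requires that the true $a_{n+1}$ solves $\beta_n^z a_{n+1}=a_n-\alpha_n^z b_{n-1}$ and that the full affine family of solutions is captured — this is where $z\notin A_n^2$ (full rank of $\beta_n^z$, guaranteeing solvability) enters, and where one must be slightly careful that the simultaneous choice of $B$ and $\bbb$ can be made to hit any prescribed $a_{n+1}$ (it can: fix $\bbb$ arbitrarily, then the equation $B a_n=a_{n+1}+\bbb b_{n-1}$ with $\beta_n^z B=\II$ is solvable for $B$ as long as $a_{n+1}+\bbb b_{n-1}$ lies suitably, which one arranges by first solving for $a_{n+1}$ abstractly). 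The secondary subtlety is pinning down exactly why $\bar z\notin\bigcup A_n$ is needed in (i) and not in (ii): in (ii) we \emph{define} $\Psi_n$ from $(V_n-z)^{-1}$ and never need to invert the contraction map, whereas in (i) the solution $\Psi$ is given and we must extract consistent boundary data, for which injectivity of $\Psi_n\mapsto(\Upsilon_n^*\Psi_n,\Phi_n^*\Psi_n)$ on the relevant range is required — and that injectivity can fail exactly on $A_n$ reflected through $\bar z$. I would isolate this as a short lemma-style remark and otherwise the proof is a bookkeeping exercise.
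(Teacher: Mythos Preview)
Your overall scaffolding matches the paper's, but there is a genuine gap in part~(i), and it is precisely where you think the argument is routine.

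The claim ``conversely every solution has this form'' for $a_{n+1}$ is false in general: the set $\{B a_n-\bbb\,b_{n-1}:B\in\BB_n^z,\,\bbb\in\BBb_n^z\}$ equals $B_0(a_n-\alpha_n^z b_{n-1})+\{K_1 a_n+K_2 b_{n-1}:K_i\in\KK_n^z\}$, and the second set spans all of $\ker\beta_n^z$ \emph{only if} $\smat{a_n\\b_{n-1}}\neq 0$. If $a_n=0$ and $b_{n-1}=0$ you are stuck (your suggested fix ``solve $Ba_n=a_{n+1}+\bbb b_{n-1}$ for $B$'' fails when $a_n=0$). The paper's construction splits into cases: if $a_n\neq 0$ choose $K_1$ with $K_1 a_n=\bfw$, $K_2=0$; if $a_n=0$ but $b_{n-1}\neq 0$ choose $K_2$ with $K_2 b_{n-1}=\bfw$, $K_1=0$. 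This is exactly why the hypothesis $(u,v)\neq(0,0)$ is essential and not merely cosmetic---it is the base of an induction that must be \emph{propagated}.

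This is also where you have misidentified the role of $\bar z\notin\bigcup_n A_n$. It has nothing to do with recovering $\Psi_n$ from boundary data (in part~(i) the solution $\Psi$ is given and you compute $a_n,b_n$ directly; no inversion of the contraction map is needed). Its actual role is via Lemma~\ref{lem-no-ker}: since $\gamma_n^z=(\beta_n^{\bar z})^*$, the condition $\bar z\notin A_n$ makes $\gamma_n^z$ injective, which forces every $T_n^z\in\TT_n^z$ to have trivial kernel. Thus once you have constructed $T_n^z$ with $\smat{a_{n+1}\\b_n}=T_n^z\smat{a_n\\b_{n-1}}$ and $\smat{a_n\\b_{n-1}}\neq 0$, you get $\smat{a_{n+1}\\b_n}\neq 0$ and the inductive hypothesis survives to the next step. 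Without this lemma the argument collapses at the first $n$ where the boundary vector might vanish.

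A smaller point: $z\notin A_n^1$ does \emph{not} imply $V_n-z$ is invertible; $A_n^1$ only records failure of analytic extension of $R_n^z$. When $z\in\spec(V_n)\setminus A_n^1$ the kernel of $V_n-z$ is orthogonal to the columns of $\Upsilon_n$ and $\Phi_n$, and one works on its orthogonal complement. This affects both parts; the paper handles it explicitly, you do not.
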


Note, the correspondences in $a)$ and $b)$ are not one-to-one in general!
If we let $v=w\cdot u=w\cdot \Upsilon_0^* \Psi_0$, then $\Psi$ is a formal solution of $(H-w P_0\Ups_0 \Ups_0^* P_0^*) \Psi = z \Psi$
and one can relate to $v=w\cdot u$ as some form of 'boundary condition'.

\section{Main Results}

\subsection{Spectral Theory}
So far, everything can be well defined for unbounded symmetric operators with local finite hopping.
For spectral theory we need a self-adjoint operator. 
As mentioned above, we want $H$ to be self-adjoint with its maximal domain. 

\begin{defini}
A Hermitian (possibly unbounded) operator $H$ on $\ell^2(\GG)$ with locally finite hopping will be said to be {\bf self-adjoint with its natural domain}, 
if the set of compactly supported vectors, $\HH_{c}:=\{\psi\in\ell^2(\GG)\,:\, \#\{x\in\GG\,:\,\psi(x)\neq 0\}\,<\,\infty\}$, is a core for $H$, i.e. $H_{min}=H|\HH_{c}$ is essentially self-adjoint. 
\end{defini}

$H_{min}^*$ has the maximal possible domain in terms of restricting $H$ (as operator on $\CC^\GG$) and its range to $\ell^2(\GG)$, 
i.e. the domain of $H_{min}^*$ is given by all functions $\psi$ in $\ell^2(\GG)$ such that 
$H\psi$ is as well in $\ell^2(\GG)$. This is the natural domain for $H$ in $\ell^2(\GG)$. 
Note that $H_{min}$ is essentially self-adjoint if and only if $H_{min}^*$ is self-adjoint in which case $\HH_c$ is a core.
In the language of physicists this property means that one does not have to specify a 'boundary condition at infinity'.
It is evident that $\ell^2$ operator-norm bounded operators $H$ have this property.
From now on by $H$ we consider the operator with its natural domain, i.e. 
$H=H_{min}^*$.

If $H=H_{min}^*$ is self-adjoint then we will denote the spectral measure at the chosen root-vector $P_0 \Upsilon_0\in \ell^2(\GG)$ by $\mu$, i.e. 
$$
\int f(\lambda)\,\mu(d\lambda)\,=\, \langle\,P_0 \Upsilon_0\,,\, f(H)\,P_0 \Upsilon_0 \,\rangle\;.
$$

\begin{Theo}\label{Theo-1} Assume that $H$ is self-adjoint with its natural domain and that assumptions (A1), (A2) hold.
There is a point measure $\nu$ supported on $\liminf_n A^1_{0,n}=\bigcup_{m\geq 0} \bigcap_{n\geq m} A^1_{0,n}$ and generated by
compactly supported eigenfunctions of $H$, such that
\begin{align*}
d\mu(\lambda)\,&=\,d\nu(\lambda)\,+\, \lim_{n\to\infty}\,\frac{1}{\pi} \left\| \left(1+(\delta^\lambda_{0,n})^2\right)^{-1/2} \gamma^\lambda_{0,n}\,\right\|^2 \,d\lambda \\
&\,=\,d\nu(\lambda)\,+\,\lim_{n\to\infty}\,\frac{1}{\pi}\,\left( \min_{\bfu^\lambda_n \in\D^\lambda_n}\|\bfu^\lambda_n\|^2  \right)^{-1}\,d\lambda \\
&=\, d\nu(\lambda)\,+\,\lim_{n\to\infty} \frac1\pi \max_{\substack{T^\lambda_i\in \TT^\lambda_i \\ i=1,\ldots,n}}\,\left\| T^\lambda_n T^\lambda_{n-1}\cdots T^\lambda_{1} T^\lambda_0 \pmat{1\\0} \right\|^{-2}\;d\lambda\\
&=\, d\nu(\lambda)\,+\,\lim_{n\to\infty} \frac1\pi \max_{\substack{T^\lambda_i\in \T^\lambda_i \\ i=1,\ldots,n}}\,\left\| T^\lambda_n T^\lambda_{n-1}\cdots T^\lambda_{1} T^\lambda_0 \pmat{1\\0} \right\|^{-2}\;d\lambda
\end{align*}
where $d\lambda$ denotes the Lebesgue measure in $\RR$ and the limit has to be understood in the weak topology of finite Radon-measures (i.e. $d\mu_n(\lambda)\to d\mu(\lambda)$ iff $\int f(\lambda) d\mu_n(\lambda) \to \int f(\lambda)d\mu(\lambda)$ for bounded continuous functions $f$).
\end{Theo}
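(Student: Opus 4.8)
The plan is to reduce the whole statement to the classical Jacobi/one-channel spectral averaging formula applied to a *finite* truncation, and then pass to the limit. The natural object is the truncated operator $H_{0,n}=H_{0,n}$ on $\ell^2(\GG_{0,n})$. First I would recall the finite-volume identity: for a self-adjoint $N\times N$ matrix $M$ and a unit vector $\phi$, the spectral measure of $(M,\phi)$ has, in the weak sense, the Stieltjes-transform representation $\langle\phi,(M-z)^{-1}\phi\rangle$, whose boundary values on $\RR$ give a purely atomic measure supported on $\spec M$, with weights $\|(\text{corresponding eigenprojection})\phi\|^2$. The key is to relate $\langle P_0\Upsilon_0,(H-z)^{-1}P_0\Upsilon_0\rangle$ for the full operator to the $(1,1)$-entry $\alpha^z_{0,n}$ of the truncated boundary data, plus a correction coming from the shells beyond $n$. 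Concretely: write $H=H_{0,n}\oplus H_{n+1,\infty}$ plus the rank-$r_{n+1}$ coupling $-\Upsilon_{n+1}\Phi_n^*$ (and its adjoint) across the cut between $S_n$ and $S_{n+1}$; a Schur-complement / Krein-resolvent formula then expresses $\langle P_0\Upsilon_0,(H-z)^{-1}P_0\Upsilon_0\rangle$ as a Möbius-type transformation of $R^z_{0,n}$ whose coefficients involve the resolvent data of $H_{n+1,\infty}$ at the cut. The boundary data $R^z_{n+1,\infty}$ has a sign-definite imaginary part (Herglotz), and feeding this into the Schur complement shows that, for $\lambda\in\RR$, the density of $\mu$ (a.c.\ part) is sandwiched by, and in the limit equals, $\frac1\pi\Im$ of the relevant entry, which by the Herglotz bound and the explicit $2\times2$-block algebra equals $\frac1\pi\|(1+(\delta^\lambda_{0,n})^2)^{-1/2}\gamma^\lambda_{0,n}\|^2$ in the limit $n\to\infty$ (the coupling term vanishing weakly because the spectral measure is supported where the limiting density lives). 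The atomic remainder $\nu$ collects the eigenvalues $\lambda$ that stay singular for all large $n$, i.e.\ $\lambda\in\liminf_n A^1_{0,n}$; these correspond to genuine compactly supported eigenfunctions of $H$, which one extracts from eigenvectors of $H_{0,n}$ that are annihilated by $\Phi_n^*$ (so they do not couple to $S_{n+1}$) — this uses precisely the failure of invertibility encoded in $A^1_{0,n}$.

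Granting the first equality, the remaining three are linear-algebra identities about the affine spaces $\D^\lambda_n$, $\TT^\lambda_n$, $\T^\lambda_n$ defined before the theorem, valid for a.e.\ $\lambda$ (excluding the finite singular sets, whose union over $n$ is Lebesgue-null). For the second line I would use Proposition~\ref{prop-1}(iv): $\D^\lambda_n=\{\,\smat{B\\ \delta^\lambda_{0,n}B}:B\in\BB^\lambda_{0,n}\,\}$, so $\min_{\bfu\in\D^\lambda_n}\|\bfu\|^2=\min_{B:\beta^\lambda_{0,n}B=\II}\|\smat{B\\\delta^\lambda_{0,n}B}\|^2 =\min_B \Tr\big(B^*(1+(\delta^\lambda_{0,n})^2)B\big)$ subject to $\beta^\lambda_{0,n}B=\II_1$ (here $r_0=1$ so $B$ is a column vector). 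This is a constrained least-squares problem whose solution is standard: the minimum equals $\big(\beta^\lambda_{0,n}(1+(\delta^\lambda_{0,n})^2)^{-1}(\beta^\lambda_{0,n})^*\big)^{-1}$, and using the Herglotz relation $\beta^\lambda_{0,n}(\delta^{\bar\lambda}_{0,n})^*=\ldots$ together with $\gamma^{\bar\lambda}_{0,n}=(\beta^\lambda_{0,n})^*$ one checks this reciprocal equals $\|(1+(\delta^\lambda_{0,n})^2)^{-1/2}\gamma^\lambda_{0,n}\|^2$, matching line one. For the third and fourth lines, $\D^\lambda_n=\TT^\lambda_n\cdots\TT^\lambda_0\smat{1\\0}=\T^\lambda_n\cdots\T^\lambda_0\smat{1\\0}$ by Proposition~\ref{prop-1}(iv) again, so $\min_{\bfu\in\D^\lambda_n}\|\bfu\|^2 = \min\|T^\lambda_n\cdots T^\lambda_0\smat{1\\0}\|^2$ over the product of the affine spaces; taking reciprocals turns $\min$ into $\max$, giving the two transfer-matrix forms at once, with $\T$ versus $\TT$ irrelevant by the stated equality of the two products applied to $\smat{1\\0}$.

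The main obstacle is the first equality — specifically, controlling the coupling correction uniformly in $n$ so that the weak limit exists and the a.c.\ density is exactly $\frac1\pi\|(1+(\delta^\lambda_{0,n})^2)^{-1/2}\gamma^\lambda_{0,n}\|^2$ rather than merely bounded by it. The resolution I envisage mirrors \cite{CL} Theorem III.3.2/III.3.6 and its one-channel generalizations in \cite{Sa-AT,Sa-OC}: show that $\mu_n:=$ the a.c.\ measure with that density plus the atomic part from $A^1_{0,n}$ is, for each $n$, \emph{dominated} in a suitable sense and converges weakly; the crucial input is that $\|(1+(\delta^\lambda_{0,n})^2)^{-1/2}\gamma^\lambda_{0,n}\|^2$ is a \emph{monotone} (or near-monotone) quantity in $n$ because adding a shell can only redistribute mass outward — this monotonicity, a consequence of the Herglotz/Schur structure and of assumption (A1), is what forces the limit to be $\mu$ itself (no mass escapes to infinity, and the Dirichlet truncation's total mass $1=\|P_0\Upsilon_0\|^2$ is preserved in the limit). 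Proving that total-mass conservation — i.e.\ that $\int d\nu + \lim_n \frac1\pi\int \|\cdots\|^2 d\lambda = 1$ with no loss — is the technical heart; I would do it by testing against the constant function via the resolvent at a fixed $z$ with $\im z>0$, where the Schur-complement expansion converges absolutely and termwise, and then letting $\im z\downarrow0$.
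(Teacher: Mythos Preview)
Your linear-algebra reductions for the second, third, and fourth equalities are correct and match the paper's argument: the constrained least-squares computation identifying $\min_{\bfu\in\D^\lambda_n}\|\bfu\|^2$ with the reciprocal of $\|(1+(\delta^\lambda_{0,n})^2)^{-1/2}\gamma^\lambda_{0,n}\|^2$ via Cauchy--Schwarz, and the use of Proposition~\ref{prop-1}(iv) to rewrite $\D^\lambda_n$ as products of $\TT^\lambda_i$ or $\T^\lambda_i$, are exactly what the paper does.

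The gap is in the first equality, and it is a real one. You try to relate the full resolvent $\langle P_0\Upsilon_0,(H-z)^{-1}P_0\Upsilon_0\rangle$ directly to $R^z_{0,n}$ via a Schur complement with the tail $H_{n+1,\infty}$, and then argue by ``monotonicity'' and mass conservation that the density $\frac1\pi\|(1+(\delta^\lambda_{0,n})^2)^{-1/2}\gamma^\lambda_{0,n}\|^2$ converges weakly to that of $\mu-\nu$. But this density is \emph{not} monotone in $n$, and there is no a~priori reason why the measure with this density should converge to $\mu$ at all; you have not explained what privileged role this particular expression plays. The missing idea is the \emph{spectral averaging} mechanism that you allude to in \cite{CL} but never implement: the paper defines an averaged measure
\[
\bar\mu_n(f)\,=\,\int_{\RR^{r_{n+1}}} \langle\hat\Upsilon_0,\,f\big(H_{0,n}-\hat\Phi_n\,\diag(a_1,\ldots,a_{r_{n+1}})\,\hat\Phi_n^*\big)\,\hat\Upsilon_0\rangle\;\prod_j \nu_C(da_j)
\]
where $\nu_C$ is the Cauchy distribution. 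Because the Cauchy distribution evaluates analytic functions at $i$, the Stieltjes transform of $\bar\mu_n$ is exactly $\alpha^{z,\,i\II}_{0,n}$, and taking boundary values yields precisely $\frac1\pi\|(1+(\delta^\lambda_{0,n})^2)^{-1/2}\gamma^\lambda_{0,n}\|^2$ as the a.c.\ density of $\bar\mu_n$. This is why the formula appears. Then $\bar\mu_n\to\mu$ weakly follows from the limit-point property (Theorem~\ref{Theo-2}), since $\alpha^{z,i\II}_{0,n}\in\WW^z_n$ and all such points converge to the resolvent of $H$; equivalently, strong resolvent convergence of $H_{0,n}-\hat\Phi_n A\hat\Phi_n^*$ to $H$ for any bounded self-adjoint $A$. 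The singular parts $\nu_n$ of $\bar\mu_n$ are analyzed separately: they are point measures supported on $A^1_{0,n}$, and the paper shows $\nu_n(\{\lambda_0\})>0$ forces an eigenvector of $H_{0,n}$ orthogonal to $\ran\hat\Phi_n$, hence a compactly supported eigenvector of $H$; this gives the monotonicity $\nu_n\leq\nu_{n+1}$ (of the point measures, not the densities) and the limit $\nu$. Your Schur-complement-with-tail approach does land you inside the Weyl disc, but at the \emph{wrong} point --- the one determined by the tail resolvent rather than by $A=i\II$ --- and extracting the stated density from that requires exactly the averaging step you omitted.
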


Note that this is a generalization of  \cite[Theorem~III.3.2]{CL}, \cite[Theorem~2.3(iii)]{Sa-AT} and \cite[Theorem~2(i)]{Sa-OC}.
As in the cases before, the proof uses some spectral averaging technique in the $n$-th shell for the operator $H_{0,n}$ (restriction to the of $H$ to $\GG_{0,n}$) which selects a certain point within the Weyl discs.
Then, the limit point property in the case of self-adjointness reflects the fact that the averaged spectral measures converge to the actual spectral measure of $H$ for $n\to\infty$.
Here, we have the following Weyl discs:

\begin{Theo}\label{Theo-2}
Let $\im(z)>0$ let the $n$-th Weyl disc be given by
$$
\WW^z_n\,=\, {\rm cl\,}\{\alpha^z_{0,n}+\beta^z_{0,n} A(\II-\delta^z_{0,n}A)^{-1} \gamma^z_{0,n}\,:\, A\in \CC^{r_{n+1} \times r_{n+1}}\,,\, \Im(A)\,\geq\,\nul\;\}\,\subset\,\CC\;,
$$
where $\Im(A):=(A-A^*)\,/\,(2\imath)$ denotes the 'imaginary part' of $A$ in $C^*$ algebra sense and ${\rm cl\,} \Ss$ denotes the topological closure of a set $\Ss\subset \CC$.
Assume that assumptions (A1) and (A2) hold, then:
\begin{enumerate}[{\rm (i)}]
\item $\WW^z_n$ is a closed (circular) disc in the upper half plane, $\WW^z_{n+1} \subset \WW^z_n$  and $\bigcap_{n=0}^\infty \WW^z_n$ is either a closed limit disc with positive radius ({\bf limit circle case}) or it consists of only one point ({\bf limit point case}).
\item If one of the deficiency indices $d^\pm=\dim \ker (H_{min}^* \pm \imath)$ is zero (particularly if $H_{min}^*$ is self-adjoint) then $\bigcap_{n=0}^\infty \WW^z_n$ consists of one point. If $H=H_{min}^*$ is self-adjoint, then
$$
\left\{\,\langle P_0 \Upsilon_0\,,\,(H-z)^{-1}\, P_0 \Upsilon_0\,\rangle\,\right\}\,=\, \bigcap_{n=0}^\infty \WW^z_n\,. 
$$
\item Let $z,\bar z\not \in \bigcup_{n=0}^\infty A_n$. Then, $\bigcap_{n=0}^\infty \WW^z_n$ is a limit disc of positive radius if and only if there are $\ell^2$ functions $\psi^\pm \in \ell^2(\GG)$ with $\langle P_0\Upsilon_0, \psi^\pm\rangle \neq 0$ and $H_{min}^*\psi^+=z\psi^+$ and $H_{min}^*\psi^-=\bar z \psi^-$.
\end{enumerate}
\end{Theo}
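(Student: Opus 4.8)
The strategy is the standard nested-discs (Weyl limit-point/limit-circle) machinery, adapted to the rectangular transfer-matrix setting. First I would establish that $\WW^z_n$ is indeed a closed disc by exhibiting it as the image of the matrix-valued ``half-plane'' $\{A\in\CC^{r_{n+1}\times r_{n+1}}:\Im(A)\ge\nul\}$ under a fractional-linear (Cayley-type) map and recognizing that the one-dimensional scalar quantity
$\alpha^z_{0,n}+\beta^z_{0,n}A(\II-\delta^z_{0,n}A)^{-1}\gamma^z_{0,n}$
is itself a matrix M\"obius action contracted against the root-channel vector. The cleanest route: identify $\WW^z_n$ with the set of values $\langle P_0\Upsilon_0,(H_{0,n}-B-z)^{-1}P_0\Upsilon_0\rangle$ where $B$ ranges over self-adjoint ``boundary perturbations'' supported on the last shell $S_n$ coupled through $\Phi_n$ (i.e.\ $B=\Phi_n A^{-1}\Phi_n^*$ type terms, with the resolvent identity producing exactly the stated Schur-complement expression). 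Since each such resolvent has positive imaginary part and the map $B\mapsto$ value is an operator-monotone-flavored fractional map, the image of a matrix half-plane is a scalar disc in the upper half-plane; its closure is closed. Nesting $\WW^z_{n+1}\subset\WW^z_n$ follows because enlarging $n$ only adds more freedom that can reproduce the previous disc as a sub-case (take $A$ block-diagonal with a piece realizing the old boundary condition on $S_n$), and a nested sequence of closed discs in $\CC$ shrinks to either a disc of positive radius or a point — that is (i).

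For (ii), the key is the identity from Theorem~\ref{Theo-1}'s circle-of-ideas: the radius of $\WW^z_n$ is controlled by $\|(\text{something like }(1+(\delta^z_{0,n})^*\delta^z_{0,n})^{-1/2}\gamma^z_{0,n})\|$-type quantities, equivalently by $\min_{\bfu\in\D^z_n}\|\bfu\|^2$, and the limit disc degenerates to a point exactly when these norms diverge, i.e.\ when $\D^z_n$ has no bounded sequence, i.e.\ when the only $\ell^2$ solution of $H_{min}^*\psi=z\psi$ with $\langle P_0\Upsilon_0,\psi\rangle\ne0$ is absent. Standard Weyl theory: if one deficiency index vanishes then $H_{min}$ has a self-adjoint extension with that index data trivial, forcing uniqueness of the $m$-function value, hence the intersection is a single point; and if $H=H_{min}^*$ is genuinely self-adjoint then the functional calculus gives $\langle P_0\Upsilon_0,(H-z)^{-1}P_0\Upsilon_0\rangle$ as the unique accumulation value of the Weyl discs (each $\WW^z_n$ contains the $m$-function of $H$ by an approximation/monotone-convergence argument on the truncations $H_{0,n}$, using that $(H_{0,n}-z)^{-1}\to(H-z)^{-1}$ strongly when $\HH_c$ is a core).

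For (iii) I would run the classical limit-circle $\Leftrightarrow$ ``all solutions $\ell^2$'' dichotomy, but phrased through the transfer matrices and using $\bar z\notin\bigcup A_n$ to keep $R^z_{m,n}$ and the relation $T_1^*J_{r_{n+1}}T_2=J_{r_m}$ (Proposition~\ref{prop-2}(i) extended off the real axis via the $\bar z$-conjugation relations) available. Positive radius of $\bigcap\WW^z_n$ means $\sup_n\min_{\bfu\in\D^z_n}\|\bfu\|<\infty$, which via Proposition~\ref{prop-1}(iii)--(iv) produces a bounded sequence in $\D^z_n=\TT^z_n\cdots\TT^z_0\smat{1\\0}$; a diagonal/compactness extraction yields a genuine $\ell^2$ eigenfunction $\psi^+$ with $\Upsilon_0^*\psi^+_0\ne0$ (this is where the symplectic bilinear form is used: it guarantees the limiting boundary data are nondegenerate and that the reconstructed $\psi^+$ via Proposition~\ref{prop-3}(ii) is square-summable, by a telescoping-Wronskian bound $\sum_n\|\psi_n\|^2<\infty$). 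Conversely, given such $\psi^\pm$ one builds, for $A$ ranging near the boundary data of $\psi^\pm$, a one-parameter family of distinct resolvent-type values all lying in $\bigcap\WW^z_n$, forcing positive radius.

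\emph{Main obstacle.} The delicate point is (iii): showing that a bounded sequence in $\D^z_n$ (or in the transfer-matrix products applied to $\smat{1\\0}$) actually reconstructs an honest $\ell^2(\GG)$ function, not merely a bounded-per-shell formal solution. In the scalar/one-channel case this is a Wronskian-telescoping argument; here the rectangular shapes and the affine ambiguity in $\TT^z_n$ mean one must choose representatives compatibly along the tower and control the full vector $\psi_n\in\CC^{s_n}$ (not just its channel projections $\Upsilon_{n+1}^*\psi_{n+1},\Phi_n^*\psi_n$) in $\ell^2$. I expect this to require the Hermitian-symplectic relation of Proposition~\ref{prop-2} to produce a telescoping identity for $\sum_{k\le n}\|\psi_k\|^2$ in terms of boundary data at level $n$, together with assumption (A2) to recover the ``orthogonal complement'' components of $\psi_n$ from the channel data with uniformly controlled norm — this is the step I would write out most carefully.
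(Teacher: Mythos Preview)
Your plan for (i) and for the self-adjoint half of (ii) is essentially the paper's: the paper also writes $\WW^z_n$ as the set of values $\alpha^{z,A}_{0,n}=\langle\hat\Upsilon_0,(H_{0,n}-\hat\Phi_nA\hat\Phi_n^*-z)^{-1}\hat\Upsilon_0\rangle$, then uses a matrix Cayley change $A\leftrightarrow R$ (with $I=\Im(\delta^z_{0,n})$) to identify $\WW^z_n$ with $\{\alpha+c\,e_1^*Re_1:R\in\DD(r_{n+1})\}$, a genuine disc of radius $\tfrac12\|\beta^z_{0,n}\sqrt{I^{-1}}\|\,\|\sqrt{I^{-1}}\gamma^z_{0,n}\|$. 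Nesting is obtained from the $\trr$-relation $R^z_{0,n+1}=R^z_{0,n}\trr R^z_{n+1}$, and the limit-point identification in the self-adjoint case is exactly your strong-resolvent-convergence argument.

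Where your route diverges is (iii) and the radius/solution link underlying (ii). You propose to measure the radius by $\min_{\bfu\in\D^z_n}\|\bfu\|^2$ and then reconstruct an $\ell^2(\GG)$ eigenfunction from bounded channel data via Proposition~\ref{prop-3}(ii) plus a symplectic telescoping. You are right to flag this as the obstacle: the channel projections $(\Upsilon_{n+1}^*\psi_{n+1},\Phi_n^*\psi_n)$ do not control $\|\psi_n\|_{\CC^{s_n}}$, and nothing like (A2) bounds the component of $\psi_n$ orthogonal to $\ran\Upsilon_n+\ran\Phi_n$. Also note that the identity $\|\bfv^\lambda_n\|^2=(\min_{\bfu\in\D^\lambda_n}\|\bfu\|^2)^{-1}$ you are alluding to is proved in the paper only for \emph{real} $\lambda$ (it enters Theorem~\ref{Theo-1}), so its relevance to the radius of $\WW^z_n$ for $\im(z)>0$ would need a separate argument.

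The paper bypasses this entirely. Its key step is the explicit radius formula (Proposition~\ref{prop-W1}(ii))
\[
\frac{1}{4\,\im(z)^2\,r_{z,n}^2}\;=\;\Bigl(\min_{\psi\in\Psi^z}\sum_{k\le n}\|\psi_k\|^2\Bigr)\Bigl(\min_{\psi\in\Psi^{\bar z}}\sum_{k\le n}\|\psi_k\|^2\Bigr),
\]
obtained not through transfer matrices but through the resolvent representation of truncated solutions: for $\psi\in\Psi^z$ one has $\psi_{0,n}=(H_{0,n}-z)^{-1}\hat\Phi_n B$ with $B=\Upsilon_{n+1}^*\psi_{n+1}$, whence $\im(z)\,\|\psi_{0,n}\|^2=B^*\,\Im(\delta^z_{0,n})\,B=\|\sqrt{I}B\|^2$ and a Cauchy--Schwarz against $\beta^z_{0,n}\sqrt{I^{-1}}$ gives the minimum. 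This formula already contains the \emph{full} $\ell^2$-norm $\sum_{k\le n}\|\psi_k\|^2$, so both directions of (iii) are immediate: if $r_{z,\infty}>0$ one picks near-minimizers $\psi^{z,n}$, truncates, and extracts a weak $\ell^2$-limit; conversely an $\ell^2$ solution bounds the minimum uniformly. No reconstruction from channel data, no telescoping Wronskian, and Proposition~\ref{prop-2} is not used here at all.
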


\begin{rem}
{\rm (i)}
For semi-infinite Jacobi operators (semi-infinite tri-diagonal Hermitian matrices) it is well known that the limit point property is equivalent to $H_{min}^*$ being self-adjoint. 
Furthermore, in the case where $\bigcap_{n} \WW^z_n$ is a limit disc, each point on the boundary (the limit circle) characterizes a self-adjoint extension of $H_{min}$.
A nice proof of these facts from the discrete analogue of Weyl-Titchmarch theory can be found in Techl's monograph on Jacobi operators \cite{Tes}.
For one-channel operators we have shown the equivalence under certain conditions, for instance, if all matrix entries of $H$ are real \cite{Sa-OC}. \\
{\rm (ii)} 
One might work with some matrix $\Upsilon_0$ (using $r_0>1$) consisting of column vectors in $\ell^2(S_0)$ and Weyl-discs (with same formula as above) of higher dimensions where one can have
different types of limit discs (different dimensions). In the case of  Jacobi operators with matrix entries (operators on strips) this is a typical choice to get the usual $2m\times 2m$ transfer matrices. The Weyl-Titchmarsh theory in this case is developed in \cite{Ber} and for the nice geometric picture of these higher dimensional Weyl discs see \cite{Fuk, SB}.
\end{rem}

Theorem~\ref{Theo-1} gives the following useful criteria.
By $\spec(H)$, $\spec_{ac}(H)$ we denote the spectrum and the absolutely continuous spectrum of $H$ as closed sets.

\begin{Theo}\label{Theo-1a}
{\rm (i)}
Let be given an interval $[a,b]\subset \RR$. Suppose for some increasing sequence $(n_k)_k$ of positive integer and all $\lambda\in[a,b]$ there is a measurable function
$\lambda\mapsto T^\lambda_{0,n_k} \in \TT^\lambda_{0,n_k}$, such that 
$$
\sup_{k\in\NN} \left\|T^\lambda_{0,n_k} \pmat{1\\0}\right\|\,<\,\infty\quad \text{for Lebesgue almost all $\lambda\in(a,b)$}\;.
$$
Then, $[a,b]\subset \supp \mu_{ac} \subset \spec_{ac}(H)$ meaning that the measure $\mu$ has an absolutely continuous part whose support contains $[a,b]$. In other words, there is absolutely continuous spectrum everywhere in $[a,b]$.\\[.2cm]
{\rm (ii)}
Suppose that we find some almost everywhere positive, locally integrable function $w(\lambda)\in L^1_{loc}1{(a,b)}$ and measurable functions $\lambda \mapsto T^\lambda_{0,n}\in \TT^\lambda_{0,n}$ such that
for any compact set $K\subset (a,b)$ of positive Lebesgue measure we have
$$
\liminf_{n\to \infty} \int_K \log\left( \left\|T^\lambda_{0,n} \pmat{1\\0} \right\|\,w(\lambda)\,\right)\,w(\lambda)\,d\lambda\,<\,\infty\;.
$$
Then, $[a,b] \subset \supp_{ac} \mu \subset \spec_{ac}(H)$, i.e. there is absolutely continuous spectrum everywhere in $[a,b]$.
\end{Theo}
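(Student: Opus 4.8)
The plan is to deduce both parts from the spectral averaging formula of Theorem~\ref{Theo-1}, using the classical principle that if a sequence of absolutely continuous measures $d\mu_n(\lambda)=f_n(\lambda)\,d\lambda$ converges weakly to a finite measure $d\mu$ and the densities $f_n$ do not degenerate to zero on a set of positive measure, then $\mu$ has a nontrivial absolutely continuous component there. Concretely, from Theorem~\ref{Theo-1} we know
$$
d\mu(\lambda)\,=\,d\nu(\lambda)\,+\,\lim_{n\to\infty}\frac{1}{\pi}\Big(\min_{\bfu^\lambda_n\in\D^\lambda_n}\|\bfu^\lambda_n\|^2\Big)^{-1} d\lambda
$$
in the weak topology. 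Since $\nu$ is a pure point measure it is irrelevant for the a.c.\ part, so it suffices to control $\liminf_n$ of the densities $g_n(\lambda):=\tfrac1\pi\big(\min_{\bfu^\lambda_n\in\D^\lambda_n}\|\bfu^\lambda_n\|^2\big)^{-1}$. The key observation is that for any measurable selection $\lambda\mapsto T^\lambda_{0,n}\in\TT^\lambda_{0,n}$ we have $T^\lambda_{0,n}\smat{1\\0}\in\D^\lambda_n$ by the definition of $\D^z_n$, hence $\min_{\bfu^\lambda_n\in\D^\lambda_n}\|\bfu^\lambda_n\|^2\le\|T^\lambda_{0,n}\smat{1\\0}\|^2$, which gives the pointwise lower bound $g_n(\lambda)\ge\tfrac1\pi\|T^\lambda_{0,n}\smat{1\\0}\|^{-2}$.

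For part (i): under the hypothesis $\sup_{k}\|T^\lambda_{0,n_k}\smat{1\\0}\|=:M(\lambda)<\infty$ for a.e.\ $\lambda\in(a,b)$, we get $g_{n_k}(\lambda)\ge\tfrac1\pi M(\lambda)^{-2}>0$ for a.e.\ $\lambda\in(a,b)$, uniformly in $k$. I would now run the standard weak-convergence argument: for a nonnegative continuous test function $\varphi$ supported in $(a,b)$, Fatou's lemma gives $\int\varphi\,d\mu\ge\int\varphi\,d\mu_{ac}\ge\liminf_k\int\varphi\, g_{n_k}\,d\lambda$, but I actually want the reverse direction — to extract a.c.\ mass. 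The cleaner route is: since $d\mu_{n_k}\to d\mu$ weakly and $g_{n_k}\,d\lambda\ge \big(\tfrac1\pi M(\lambda)^{-2}\mathbf{1}_{\{M\le m\}}\big)\,d\lambda=:d\sigma_m(\lambda)$ on any subset $\{M\le m\}\cap(a,b)$ of positive measure (for $m$ large enough this set fills up $(a,b)$ up to null sets), weak limits respect such lower bounds against open sets — more precisely, restricting to the subsequence $n_k$, for every open $O\subset(a,b)$ one has $\mu(O)\ge\liminf_k\mu_{n_k}(O)\ge\sigma_m(O)$ by the portmanteau theorem. Since $\sigma_m$ is absolutely continuous with a.e.-positive density on $(a,b)$ (as $m\to\infty$), $\mu$ dominates an a.c.\ measure whose density is a.e.\ positive on $(a,b)$; hence $\mu_{ac}$ has support containing $[a,b]$, and $\supp\mu_{ac}\subset\spec_{ac}(H)$ is standard. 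The one technical point requiring care is passing from the weak limit along the full sequence (which is what Theorem~\ref{Theo-1} provides) to the subsequence $(n_k)$: since the full sequence converges weakly to $d\mu$, so does every subsequence, so this is harmless.

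For part (ii): here the bound $\sup_k\|T^\lambda_{0,n_k}\smat{1\\0}\|<\infty$ is replaced by an integrated log-condition, and the natural tool is a Jensen-inequality / entropy argument in the spirit of Last--Simon \cite{LS}. On a compact $K\subset(a,b)$ with $\int_K w\,d\lambda=1$ (normalize), treating $w\,d\lambda$ as a probability measure on $K$, Jensen's inequality applied to $-\log$ gives
$$
-\log\!\int_K\!\Big(\|T^\lambda_{0,n}\smat{1\\0}\|\,w(\lambda)\Big)^{-2}w(\lambda)\,d\lambda\;\le\;2\int_K\log\!\Big(\|T^\lambda_{0,n}\smat{1\\0}\|\,w(\lambda)\Big)w(\lambda)\,d\lambda,
$$
so the hypothesis $\liminf_n\int_K\log(\|T^\lambda_{0,n}\smat{1\\0}\|w(\lambda))w(\lambda)\,d\lambda<\infty$ forces $\limsup_n\int_K \|T^\lambda_{0,n}\smat{1\\0}\|^{-2}w(\lambda)^{-1}w(\lambda)\,d\lambda=\limsup_n\int_K\|T^\lambda_{0,n}\smat{1\\0}\|^{-2}\,d\lambda>0$ along a subsequence. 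Combined with $g_n(\lambda)\ge\tfrac1\pi\|T^\lambda_{0,n}\smat{1\\0}\|^{-2}$, this says $\limsup_n\mu_{ac,n}(K)\ge c_K>0$ for every compact $K\subset(a,b)$ of positive measure, with $c_K$ depending only on the $\liminf$ value and on $w$ restricted to $K$. Since $d\mu_n\to d\mu$ weakly, $\mu(\bar K^{\circ})\ge\limsup$... — again portmanteau in the form $\mu(U)\ge\limsup_n\mu_n(U)$ fails in general, so I would instead argue via densities: the bound $\limsup_n\int_K\|T^\lambda_{0,n}\smat{1\\0}\|^{-2}\,d\lambda\ge c_K>0$ for \emph{all} compact $K\subset(a,b)$ of positive measure is exactly the hypothesis of a Last--Simon type lemma (cf.\ \cite[Lemma]{LS}) guaranteeing that the weak limit $\mu$ has a.c.\ part with support all of $[a,b]$; alternatively one invokes that a finite measure whose approximants have uniformly-in-$K$ lower-bounded a.c.\ mass on every positive-measure $K$ must itself be a.c.-dominating on $(a,b)$. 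I expect the main obstacle to be precisely this last step: transferring a \emph{lower bound on integrals of densities} through a merely \emph{weak} limit of measures, which is false for arbitrary sets but true once phrased correctly (over all compact subsets simultaneously, or using lower semicontinuity of $\mu\mapsto\mu_{ac}(K)$ under weak limits with uniformly $L^1$-bounded densities). Once that lemma is in place, both conclusions $[a,b]\subset\supp\mu_{ac}$ and $\supp\mu_{ac}\subset\spec_{ac}(H)$ follow immediately.
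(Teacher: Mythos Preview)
Your proposal is essentially correct and follows the same approach as the paper. Both parts hinge on the pointwise bound $g_n(\lambda)\ge\tfrac1\pi\|T^\lambda_{0,n}\smat{1\\0}\|^{-2}$ coming from $T^\lambda_{0,n}\smat{1\\0}\in\D^\lambda_n$, exactly as you identify.

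For part~(i) the paper is more compressed: since every approximating measure $\tfrac1\pi\|\bfv^\lambda_{n_k}\|^2\,d\lambda$ dominates the \emph{fixed} absolutely continuous measure $\tfrac1\pi(\sup_k\|T^\lambda_{0,n_k}\smat{1\\0}\|^2)^{-1}\,d\lambda$, so does the weak limit $\mu-\nu$; your portmanteau-plus-truncation route gets to the same place with more scaffolding (the truncation $\{M\le m\}$ is in fact unnecessary because $M(\lambda)^{-2}\le\pi g_{n_0}(\lambda)$ is already integrable).

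For part~(ii) your Jensen step is right in spirit, but the paper attributes the argument to Deift--Killip \cite{DK} rather than Last--Simon, and packages the entire ``Jensen plus transfer through weak limits'' step as a single lemma (Proposition~\ref{prop-B1} in the appendix): if $\liminf_n\int_K -\log\!\big(\tfrac{d\mu_n}{d\lambda}\tfrac1w\big)\,w\,d\lambda<\infty$ for all compact $K\subset(a,b)$ of positive measure, then $\mu(K)>0$ for all such $K$, whence $[a,b]\subset\supp\mu_{ac}$. The ``obstacle'' you flag---passing a lower bound on $\int_K g_n\,d\lambda$ through the weak limit---is handled there by the one-line observation $\mu(K)=\inf_{\phi}\int\phi\,d\mu\ge\limsup_n\int_K g_n\,d\lambda$ over continuous $\phi\ge\mathbf{1}_K$, then Jensen. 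So what you leave as a cited lemma is exactly what the paper supplies, just under a different name.
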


Part (ii) is using the criterion of Deift-Killip \cite{DK} in their work on absolutely continuous spectrum for Schrödinger operators with $\ell^2$ potentials on $\ell^2(\ZZ)$.
Now, with some stronger condition we get to pureness of absolutely continuous spectrum like in the criterion of Last-Simon \cite{LS}.

\begin{Theo}\label{Theo-4}
\begin{enumerate}[\rm (i)]
\item 
Suppose for some $p>1$, $[a,b]\subset \RR$  and $\lambda\in [a,b]$ one finds measurable functions $\lambda\mapsto T^\lambda_{0,n} \in \TT^\lambda_{0,n}$ and $\lambda \mapsto u_n(\lambda)\in \CC$ such that
$$
\liminf_{n\to\infty} \int_a^b \left\| T^\lambda_{0,n} \pmat{u_n(\lambda)\\1} \right\|^{2p}\,d\lambda\,<\,\infty\;.
$$
Then, the positive measure $\mu-\nu$ (as in Theorem~\ref{Theo-1}) restricted to $(a,b)$ is purely absolutely continuous with a density which is in $L^p(a,b)$.
 Hence, $\mu$ is purely absolutely continuous in $(a,b) \setminus \liminf_n A^1_{0,n}$.
\item 
Suppose for some $p>1$, $[a,b]\subset \RR$  and $\lambda\in [a,b]$ one finds a measurable function $\lambda\mapsto T^\lambda_{0,n} \in \TT^\lambda_{0,n}$ such that
$$
\liminf_{n\to\infty} \int_a^b \| T^\lambda_{0,n}  \|^{2p}\,d\lambda\,<\,\infty
$$
 then $[a,b] \subset \supp \mu_{ac} \subset \spec_{ac}(H)$ and the measure $\mu-\nu$ is purely absolutely continuous in $(a,b)$ with an $L^p$ density.
 Hence, $\mu$ is purely absolutely continuous in $(a,b) \setminus \liminf_n A^1_{0,n}$ and there is spectrum.
\end{enumerate}
\end{Theo}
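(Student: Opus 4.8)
The plan is to deduce both parts from Theorem~\ref{Theo-1} together with the second representation $d\mu(\lambda)-d\nu(\lambda)=\lim_{n}\frac1\pi(\min_{\bfu^\lambda_n\in\D^\lambda_n}\|\bfu^\lambda_n\|^2)^{-1}d\lambda$, combined with a lower-semicontinuity / Fatou argument in the weak topology of measures. The point is that the vector $T^\lambda_{0,n}\smat{u_n(\lambda)\\1}$ lies in the affine set $\TT^\lambda_{0,n}\smat{u_n(\lambda)\\1}$, which for generic $u_n$ is an affine translate of $\TT^\lambda_{0,n}\smat{0\\1}=\N^\lambda_n$; more usefully, writing $\smat{u_n\\1}=\smat{u_n\\0}+\smat{0\\1}$ we see that for any choice of right-inverse $B$ the top block of $T^\lambda_{0,n}\smat{u_n(\lambda)\\1}$ differs from an element of $\D^\lambda_n$ by a controlled amount. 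The cleanest route, however, is the following: since $\D^\lambda_n=\TT^\lambda_{0,n}\smat{1\\0}$ and $\N^\lambda_n=\TT^\lambda_{0,n}\smat{0\\1}$, for \emph{any} $T^\lambda_{0,n}\in\TT^\lambda_{0,n}$ and any scalar $u_n(\lambda)$ the vector $T^\lambda_{0,n}\smat{u_n(\lambda)\\1}=u_n(\lambda)\,\bfu^\lambda_n+\bfv^\lambda_n$ with $\bfu^\lambda_n\in\D^\lambda_n$, $\bfv^\lambda_n\in\N^\lambda_n$. The key algebraic input from the symplectic structure (Proposition~\ref{prop-2}) is that $T^*J T=J$, which gives for such a transfer matrix the ``unimodularity'' identity relating the norm of $T\smat{u\\1}$ to the Weyl-disc data; this is what lets one bound $\min_{\bfw\in\D^\lambda_n}\|\bfw\|$ from below by a constant times $\|T^\lambda_{0,n}\smat{u_n(\lambda)\\1}\|^{-1}$ (after possibly replacing $u_n$ by a nearby value so that the relevant $2\times2$ change-of-basis determinant stays bounded away from $0$ on a set of full measure).

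Granting that pointwise comparison, I would argue as follows. By hypothesis $g_n(\lambda):=\|T^\lambda_{0,n}\smat{u_n(\lambda)\\1}\|^{2p}$ satisfies $\liminf_n\int_a^b g_n\,d\lambda<\infty$; pass to a subsequence along which the $\liminf$ is the limit and is finite. On that subsequence, $\frac1\pi(\min_{\bfw\in\D^\lambda_n}\|\bfw\|^2)^{-1}\le C\,g_n(\lambda)^{1/p}$ for a.e.\ $\lambda\in(a,b)$, and since $x\mapsto x^{1/p}$ is concave, Jensen together with the uniform $L^1$ bound on $g_n$ shows that the densities $f_n(\lambda):=\frac1\pi(\min\|\cdot\|^2)^{-1}$ are bounded in $L^p(a,b)$: indeed $\int_a^b f_n^p\,d\lambda\le C^p\int_a^b g_n\,d\lambda\le C'$ uniformly in $n$ along the subsequence. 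A bounded sequence in $L^p(a,b)$ with $p>1$ is weakly sequentially compact, so a further subsequence of $f_n\,d\lambda$ converges weakly-$*$ to some $f\,d\lambda$ with $f\in L^p(a,b)$ and $\|f\|_{L^p}\le\liminf\|f_n\|_{L^p}$. By Theorem~\ref{Theo-1} the full sequence $d\nu+f_n\,d\lambda$ converges weakly to $d\mu$ as finite Radon measures; hence on $(a,b)$ we get $d\mu=d\nu+f\,d\lambda$ with $f\in L^p(a,b)$, which is exactly the claim that $\mu-\nu$ restricted to $(a,b)$ is purely absolutely continuous with an $L^p$ density. Since $\nu$ is a pure point measure supported on $\liminf_nA^1_{0,n}$ (Theorem~\ref{Theo-1}), $\mu$ itself is purely absolutely continuous on $(a,b)\setminus\liminf_nA^1_{0,n}$. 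This proves (i).

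For (ii), note $\|T^\lambda_{0,n}\smat{u_n(\lambda)\\1}\|\le\|T^\lambda_{0,n}\|\,(|u_n(\lambda)|^2+1)^{1/2}$, so the operator-norm hypothesis with the choice $u_n\equiv0$ (or $u_n$ chosen on the nonexceptional set as above) immediately implies the hypothesis of part (i); thus $\mu-\nu$ is purely absolutely continuous on $(a,b)$ with $L^p$ density. What remains for (ii) is the extra assertion $[a,b]\subset\supp\mu_{ac}$, i.e.\ that there is \emph{genuine} spectrum. For this I would invoke Theorem~\ref{Theo-1a}(i): the operator-norm bound $\int_a^b\|T^\lambda_{0,n}\|^{2p}\,d\lambda$ being $\liminf$-finite forces, along a subsequence and for a.e.\ $\lambda\in(a,b)$, that $\|T^\lambda_{0,n}\smat{1\\0}\|\le\|T^\lambda_{0,n}\|$ stays bounded (by extracting a pointwise-a.e.\ convergent subsequence from an $L^{2p}$-bounded sequence), which is precisely the hypothesis of Theorem~\ref{Theo-1a}(i) and yields $[a,b]\subset\supp\mu_{ac}\subset\spec_{ac}(H)$.

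The main obstacle I expect is the pointwise comparison in the first paragraph: controlling $\min_{\bfw\in\D^\lambda_n}\|\bfw\|$ from below by $\|T^\lambda_{0,n}\smat{u_n(\lambda)\\1}\|^{-1}$ uniformly in $n$. The subtlety is that $\D^\lambda_n$ is a whole affine subspace (not a single vector), so one is comparing the \emph{minimum norm over} $\D^\lambda_n$ with the norm of one specific vector in a \emph{different} affine subspace $\TT^\lambda_{0,n}\smat{u_n(\lambda)\\1}$; the symplectic relation $T_1^*J_{r_{n+1}}T_2=J_{r_0}=J_1$ of Proposition~\ref{prop-2}(i), valid for all $T_1,T_2\in\TT^\lambda_{0,n}$, is the tool that ties these together, because it says that for $\bfw=T_1\smat{1\\0}\in\D^\lambda_n$ and $\bfx=T_2\smat{u\\1}\in\TT^\lambda_{0,n}\smat{u\\1}$ one has the fixed bilinear pairing $\bfw^*J_{r_{n+1}}\bfx=\smat{1&0}J_1\smat{u\\1}=1$, independent of $n$ and of the choices; hence $\|\bfw\|\,\|\bfx\|\ge|\bfw^*J_{r_{n+1}}\bfx|=1$, giving $\min_{\bfw\in\D^\lambda_n}\|\bfw\|\ge\|\bfx\|^{-1}=\|T^\lambda_{0,n}\smat{u_n(\lambda)\\1}\|^{-1}$ with \emph{no} exceptional set and \emph{no} adjustment of $u_n$ needed. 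Once this is in hand the rest is the soft functional-analytic machinery (Jensen, weak-$*$ compactness in $L^p$, weak convergence of measures from Theorem~\ref{Theo-1}) sketched above.
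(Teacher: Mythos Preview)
Your argument for part~(i) is correct and is essentially the paper's proof. The key step you identify at the end---that for any $T_1,T_2\in\TT^\lambda_{0,n}$ one has $T_1^*J_{r_{n+1}}T_2=J_1$ by Proposition~\ref{prop-2}(i), hence $|\bfw^*J_{r_{n+1}}\bfx|=1$ for $\bfw\in\D^\lambda_n$ and $\bfx=T^\lambda_{0,n}\smat{u_n(\lambda)\\1}$, giving $\|\bfv^\lambda_n\|^2=(\min_{\bfw\in\D^\lambda_n}\|\bfw\|^2)^{-1}\le\|\bfx\|^2$---is exactly what the paper does. The remaining weak-$L^p$ compactness step is also the same. (Your mention of Jensen is unnecessary: $f_n\le\tfrac1\pi g_n^{1/p}$ gives $f_n^p\le\tfrac1{\pi^p}g_n$ directly.)

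For part~(ii), your reduction to part~(i) via $u_n\equiv 0$ is fine, but the existence-of-spectrum argument through Theorem~\ref{Theo-1a}(i) has a genuine gap. You claim one can extract ``a pointwise-a.e.\ convergent subsequence from an $L^{2p}$-bounded sequence''; this is false, and even the weaker statement that one can extract a pointwise a.e.\ \emph{bounded} subsequence fails in general. Counterexample: identify $([a,b],d\lambda)$ with a probability space carrying i.i.d.\ nonnegative random variables $X_n$ with $\EE(X_n^{2p})=1$ and unbounded support; then $\|X_n\|_{L^{2p}}=1$ for all $n$, yet for every infinite $S\subset\NN$ one has $\sup_{n\in S}X_n=\infty$ almost everywhere, since $\PP(X_n\le M\ \text{for all }n\in S)=\PP(X_1\le M)^{|S|}=0$ whenever $\PP(X_1>M)>0$. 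So Theorem~\ref{Theo-1a}(i) cannot be invoked this way.

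The paper instead obtains $[a,b]\subset\supp\mu_{ac}$ from Theorem~\ref{Theo-1a}(ii): with $w\equiv 1$ and Jensen's inequality,
\[
\int_K \log\Big\|T^\lambda_{0,n}\pmat{1\\0}\Big\|\,d\lambda
\;\le\;\frac{|K|}{2p}\,\log\!\left(\frac{1}{|K|}\int_K \|T^\lambda_{0,n}\|^{2p}\,d\lambda\right),
\]
and the right-hand side has finite $\liminf$ by hypothesis, for every compact $K\subset(a,b)$.
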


\begin{rem}
Note that in all the theorems one may construct $T^\lambda_{0,n}$ by selecting $T^\lambda_n\in\TT^\lambda_n$ for each $n$ and taking the product $T^\lambda_{0,n}=T^\lambda_n T^\lambda_{n-1} \cdots T^\lambda_0$.
In Theorem~\ref{Theo-4}~(i) we do not get existence of spectrum. Particularly, for Jacobi operators on the half line one finds such $u_n(\lambda)$ for $\lambda$ in the resolvent set where the measure $\mu$ is zero. 
But as $\mu=0$  is in fact an absolutely continuous measure, there is no contradiction here.
\end{rem}

\subsection{Some models with random decaying shell-matrix potential.}

As a first application for random models we will obtain a Last-Simon type of proof for absolutely continuous spectrum for a random decaying shell-matrix potential which couple an increasing number of wires.
By shell-matrix potential we mean a direct sum operator $V=\bigoplus_n V_n$ with $V_n\in \Her(\ell^2(S_n))$ being a Hermitian $s_n\times s_n$ matrix and $(V\psi)_n=V_n \psi_n$.
Let be given a sequence $(s_n)_{n=0}^\infty$ of natural numbers
with $s_{n+1}\geq s_n>0$ and let 
$$\GG_1\,:=\,\big\{(n,j)\,:\,n\in \NN_0\,,\, j\in\{1,\ldots,s_n\}\,\big\}\,,\quad S_n\,:=\,\{n\} \times \{1,\ldots,s_n\}\,.$$
Furthermore let be given a sequence $(a_j)_{j\in \NN}$ of real numbers with $\sup_j a_j - \inf_j a_j < 4$.
Then we define the free operator on $\ell^2(\GG_2)$ by
$$
(\Delta^{(1)} \psi)_{n,j}\,=\, -\psi_{n-1,j}\,-\,\psi_{n+1,j}\,+\,a_j \psi_{n,j}
$$
for $j\in\{1,\ldots, s_n\}$ where $\psi_{n-1,j}=0$ if $j>s_{n-1}$ or $n=0$ and $(\psi_{n,j})_{(n,j) \in \GG_1}\,\in\,\ell^2(\GG_1)$. This operators describes independent wires at different mean energies $a_j$ for the $j$-th wire. Moreover, the $j$-th wire starts at the $n$-th shell where $s_{n-1} < j \leq s_n$. After the coupling with $V_n$ this becomes a stair-like structure.
Let $V_n=V_n(\omega) \in \Her(s_n)$ be a  random Hermitian $s_n\times s_n$ matrix, that is a Hermitian $s_n\times s_n$ matrix valued random variable defined on some abstract probability space $(\Omega,\Aa,\PP)$.
We assume that  $(V_n)_n$ are independent and
\begin{equation} \label{eq-V-estimate}
\sum_{n=1}^\infty \left(\, \|\EE(V_n)\|+\EE(\|V_n\|^2+\|V_n\|^4)\,\right)\,<\,\infty\,
\end{equation}
where we use the standard matrix norm in any $\CC^{s_n \times s_n}$ and $\EE$ denotes the expectation value, that is $\EE f(V_n)=\int f(V_n(\omega)) d\PP(\omega)$.
Then, we consider the random operator
$$
(H^{(1)}_\omega \psi)_n\,=\,(\Delta^{(1)} \psi)_n\,+\,V_n(\omega)\,\psi_n
$$
where we identify $\psi_n=(\psi_{n,1},\ldots,\psi_{n,s_n})^\top \in \ell^2(S_n)$.
Let us consider
$$
I_1\,:=\,\bigcap_{j=1}^\infty (-2+a_j,2+a_j)
$$
which is not empty by the assumption on the $a_j$ above. In terms of a graph structure (applying an edge from $x$ to $y\neq x$ whenever $\langle \delta_x, H^{(1)}_\omega\,\delta_y\rangle \neq 0$) the graph looks stair-like, the horizontal lines come from the basic operator $\Delta^{(1)}$ and the vertical stripes symbol random graphs with random edge-weights coming from the $V_n$, where the 'strength' of $V_n$ is decaying.

\begin{center}
\includegraphics[width=7cm]{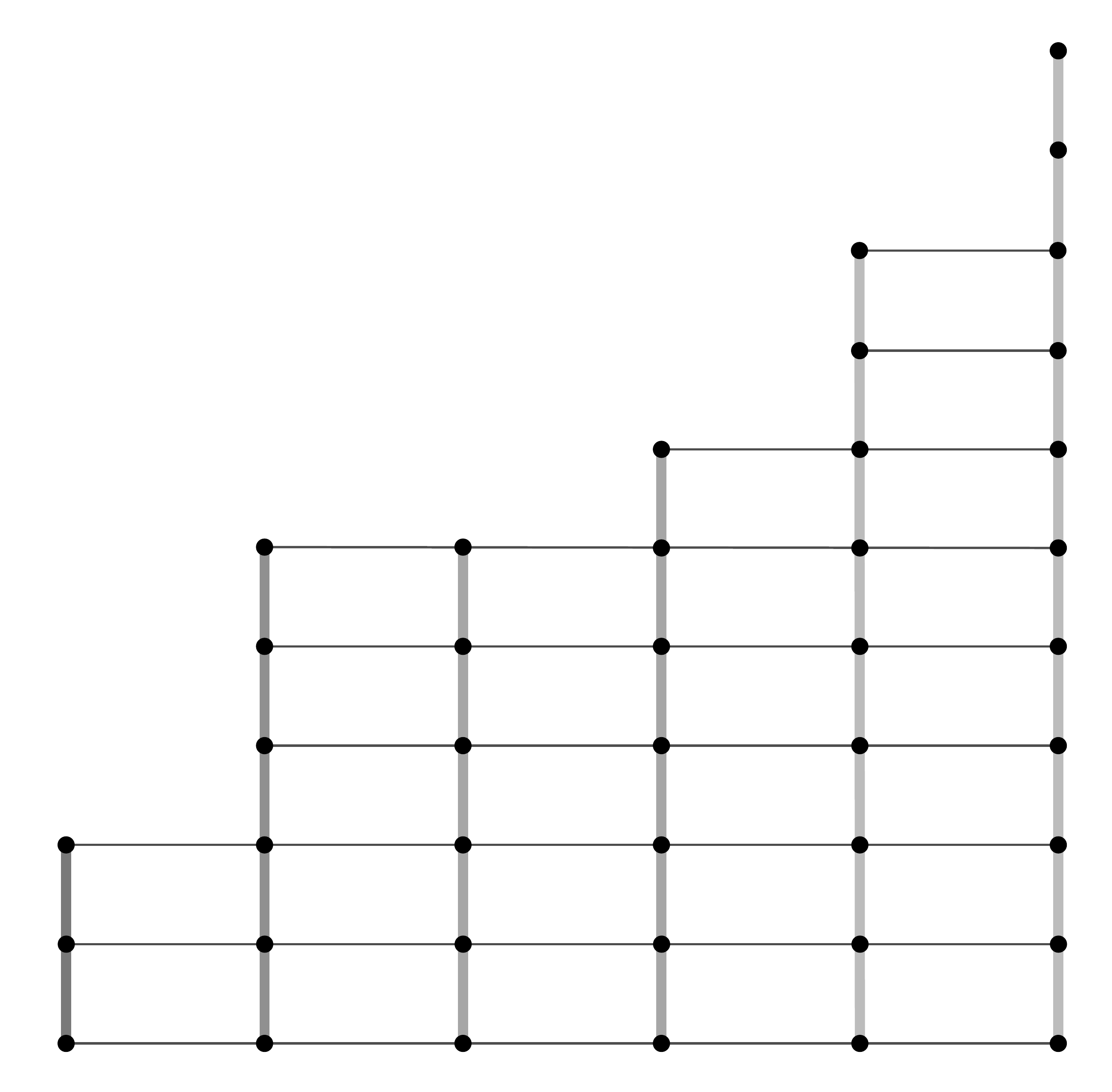}
\end{center}

\begin{Theo}\label{Theo-5}
Almost surely, the random operator $H^{(1)}_\omega$ has purely absolutely continuous spectrum in $I_2$  i.e. 
for $\PP$-almost all $\omega\in\Omega$, 
$$I_1\subset \spec_{ac}(H^{(1)}_\omega)\,,\quad
I_1\cap \spec_{pp}(H^{(1)}_\omega)=\emptyset\,,\quad
I_1\cap \spec_{sc}(H^{(1)}_\omega)=\emptyset\,$$
where $\spec_{ac}(H), \,\spec_{pp}(H),\,\spec_{sc}(H)$ denotes the absolutely continuous spectrum, the pure point spectrum, and the singular continuous spectrum of $H$, respectively (as closed sets).
\end{Theo}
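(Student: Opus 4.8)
The plan is to reduce the random stair-like operator $H^{(1)}_\omega$ to the abstract framework and apply the Last-Simon type criterion of Theorem~\ref{Theo-4}~(i) (together with Theorem~\ref{Theo-1a} for inclusion of spectrum). First I would fix the quasi-spherical partition $\GG_1=\bigsqcup_n S_n$ with $S_n=\{n\}\times\{1,\dots,s_n\}$, which already satisfies \eqref{eq-cond-sph} since $\Delta^{(1)}$ only couples neighbouring shells. The connection matrix $W_n:S_{n-1}\to S_n$ is minus the natural embedding $\CC^{s_{n-1}}\hookrightarrow\CC^{s_n}$ (padding with zeros), so $r_n=s_{n-1}$, the sequence $(r_n)$ is non-decreasing, and (A1) holds. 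One chooses $\Phi_{n-1}=\smat{\II_{s_{n-1}}}\in\CC^{s_{n-1}\times r_n}$ (the identity, since $r_n=s_{n-1}$) and $\Upsilon_n=\smat{\II_{s_{n-1}}\\ \nul}\in\CC^{s_n\times r_n}$, giving $W_n=-\Upsilon_n\Phi_{n-1}^*$. The potential is $V_n=\diag(a_1,\dots,a_{s_n})+V_n(\omega)$, and $r_0=1$, $\Upsilon_0=\delta_0$. Self-adjointness with the natural domain follows because $H^{(1)}_\omega$ is a.s. bounded: \eqref{eq-V-estimate} forces $\|V_n(\omega)\|\to 0$ a.s. by Borel--Cantelli (since $\sum_n\EE\|V_n\|^2<\infty$), so $H^{(1)}_\omega$ is a bounded perturbation of the bounded free operator. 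Assumption (A2) needs to be checked: one must verify that $\beta^\lambda_n=\Upsilon_n^*(V_n-\lambda)^{-1}\Phi_n$ has full rank $r_n=s_{n-1}$ for a.e.\ $\lambda$; since $\Upsilon_n,\Phi_n$ are fixed partial isometries and $V_n-\lambda$ is invertible for all but $s_n$ values of $\lambda$, $\beta^\lambda_n$ is a rational matrix function and it suffices to exhibit one $\lambda$ (or one parameter value) where the rank is full, which is a finite computation that holds generically and hence a.s.

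Next I would analyse the transfer matrices at the relevant energies $\lambda\in I_1$. The key structural point is that, away from the finitely many shells where $s_n$ jumps, the transfer matrices $T^\lambda_n$ are (up to the affine freedom in $\BB^z_n$) essentially the direct sum of the $s_{n-1}$ scalar free Jacobi transfer matrices $\smat{a_j-\lambda & -1\\ 1 & 0}$ at energies $\lambda-a_j\in(-2,2)$, each of which is conjugate to a rotation (elliptic). At the shells where the width increases, one extra wire is switched on with its own elliptic transfer matrix and the earlier channels are unaffected. Using Proposition~\ref{prop-3} one relates $T^\lambda_{0,n}\smat{u_n\\1}$ to a formal solution of $H^{(1)}_\omega\Psi=\lambda\Psi$; the plan is to choose $u_n(\lambda)$ so that this solution is, on the unperturbed part, the decaying (or bounded) Weyl solution for each elliptic wire. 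Then one runs a Prüfer-variable / EFGP-type transfer matrix estimate: writing the product of perturbed transfer matrices as the product of the free elliptic ones times $(\II+\text{error}_n)$ with $\|\text{error}_n\|\lesssim \|V_n(\omega)\|$, one expands and controls $\EE\,\|T^\lambda_{0,n}\smat{u_n\\1}\|^{2p}$ for $p\in(1,2]$. The cross terms linear in $V_n$ have expectation controlled by $\|\EE V_n\|$, the diagonal terms by $\EE\|V_n\|^2$, and the fourth-order contributions (needed because $2p\le 4$) by $\EE\|V_n\|^4$; summability of all three is exactly hypothesis \eqref{eq-V-estimate}. This yields $\sup_n\int_{I_1}\EE\,\|T^\lambda_{0,n}\smat{u_n(\lambda)\\1}\|^{2p}\,d\lambda<\infty$, hence by Fatou $\liminf_n\int_{I_1}\|T^\lambda_{0,n}\smat{u_n(\lambda)\\1}\|^{2p}\,d\lambda<\infty$ for $\PP$-a.e.\ $\omega$, which is the hypothesis of Theorem~\ref{Theo-4}~(i) and gives purely a.c.\ spectrum in $I_1\setminus\liminf_n A^1_{0,n}$; a separate argument that the a.s.\ point measure $\nu$ does not charge $I_1$ (no compactly supported eigenfunctions at these energies, which can be read off the explicit form of $\Delta^{(1)}$) upgrades this to the full statement that $I_1\cap\spec_{pp}=I_1\cap\spec_{sc}=\emptyset$ a.s., while Theorem~\ref{Theo-1a}~(i) applied to the same bounded sequence gives $I_1\subset\spec_{ac}(H^{(1)}_\omega)$.

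The main obstacle I anticipate is the uniform-in-$\lambda$, uniform-in-$n$ control of the transfer matrix products over the whole interval $I_1$, accounting correctly for the variable width. Two difficulties combine here: first, near the edges of $I_1$ (where some $\lambda-a_j$ approaches $\pm 2$) the elliptic rotations degenerate and the conjugating matrices blow up, so the deterministic bound on $\|T^\lambda_{0,n}\smat{u_n\\1}\|$ before averaging is not uniform — this is the standard obstruction in Last-Simon/Kotani-Simon arguments and is handled by keeping the $d\lambda$-integral and showing the $L^{2p}(I_1)$-norm stays finite rather than the sup-norm; second, the shells where a new wire appears perturb the channel structure and introduce $O(1)$ (not small) factors, but these occur only at the prescribed locations $n$ with $s_{n-1}<j\le s_n$ and, since a new elliptic block is simply appended while the old blocks pass through unchanged, they contribute a bounded multiplicative constant to $\|T^\lambda_{0,n}\smat{u_n\\1}\|$ that does not destroy $L^{2p}$-summability (one chooses $u_n$ to match the new channel's Weyl vector at the moment it appears). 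Making the bookkeeping of the affine transfer-matrix freedom compatible with a single measurable choice $\lambda\mapsto T^\lambda_{0,n}$ — taking $T^\lambda_{0,n}=T^\lambda_n\cdots T^\lambda_0$ with the canonical right-inverse $B=(\beta^\lambda_n)^*(\beta^\lambda_n(\beta^\lambda_n)^*)^{-1}$ at each step — is routine once (A2) is established, and the reduction to the scalar EFGP estimates then parallels \cite{LS, KLS}.
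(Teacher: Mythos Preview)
Your plan is close to the paper's and would succeed with one important addition; let me point out the differences and the gap.

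\textbf{Simplifications the paper makes.} The paper applies Theorem~\ref{Theo-4}(ii) with $p=2$, bounding the full matrix norm $\|T^\lambda_{0,n}\|$ rather than choosing a vector $u_n(\lambda)$; this avoids all discussion of Weyl solutions and channel-by-channel bookkeeping. The explicit choice of transfer matrix is not the canonical right-inverse but simply $B=(A_n+V_n(\omega)-\lambda)\Upsilon_n$ and $\bbb=\Upsilon_n$, giving
\[
T^\lambda_n=\pmat{(A_n+V_n-\lambda\II)\Upsilon_n & -\Upsilon_n \\ \Upsilon_n & \nul},
\]
which is entire in $\lambda$ and makes (A2) immediate. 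Your worry about $O(1)$ factors at width jumps disappears with the paper's conjugation: writing $A_n-\lambda\II=2\cos K_n$ and $Q_n=\smat{e^{\imath K_n} & e^{-\imath K_n}\\ \II & \II}$, one gets $Q_n^{-1}\widehat T^\lambda_n Q_{n-1}=\smat{e^{\imath K_n}\Upsilon_n & \\ & e^{-\imath K_n}\Upsilon_n}=:\Rr_n$, which is an \emph{exact isometry} even when $s_n>s_{n-1}$. The perturbed conjugated transfer matrix is then $\Rr_n+\Vv_n$ with $\|\Vv_n\|\lesssim\|V_n(\omega)\|$, and squaring $\|\bfu_n\|^2$ and taking expectations yields the fourth-moment bound from \eqref{eq-V-estimate} directly.

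\textbf{The genuine gap.} Theorem~\ref{Theo-4} only tells you that the spectral measure $\mu$ \emph{at the chosen root vector} $P_0\Upsilon_0$ is purely absolutely continuous in $I_1$. This does \emph{not} by itself give $I_1\cap\spec_{pp}(H^{(1)}_\omega)=\emptyset$ and $I_1\cap\spec_{sc}(H^{(1)}_\omega)=\emptyset$, since $P_0\Upsilon_0$ need not be cyclic. The paper closes this by rerunning the same argument for every restricted operator $H^{(1)}_{\omega,n}:=H^{(1)}_\omega\upharpoonright\ell^2(\GG^+_n)$, $\GG^+_n=\bigcup_{m\ge n}S_m$, at every root $\delta_{(n,j)}$, obtaining a full-measure set $\Omega_0$ on which all these spectral measures are purely a.c.\ in $I_1$; a soft modification of \cite[Lemma~2.2]{FLSSS} then transfers this to purely a.c.\ spectrum of $H^{(1)}_\omega$ itself. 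Without this step your conclusion does not follow.
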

\begin{rem}
Note, that in the non-random case ($V_n(\omega)=V_n$ is constant in $\omega$) the condition \eqref{eq-V-estimate} looks trace-class like, $\sum_n \|V_n\|<\infty$. However, since $s_n$ can grow towards infinity arbitrarily fast, it does not imply that the shell-matrix potential $V=\bigoplus_n V_n$ is actually trace class.
\end{rem}

With a unitary conjugation a similar result follows for the Bethe lattice. Therefore, let $\GG_2$ be the rooted binary Bethe lattice (or binary tree).
$\GG_2$ is constructed the following way: there is some root $0\in \GG_2$ being the zeroth generation and then each point in the $n$-th generation is connected to two 'children' in the $n+1$st generation. 

We let the $n$-th shell $S_n$ denote the set of points in the $n$-th generation (graph-distance $n$ from the root). Then, we have $s_n=\#S_n=2^n$ and we let $V_n=V_n(\omega) \in \Her(s_n)$ be a random
sequence of $s_n \times s_n$ Hermitian matrices as above satisfying \eqref{eq-V-estimate}.
Like above, $\psi_n=(\psi_{n,1},\ldots,\psi_{n,2^n})^\top \in \ell^2(S_n)=\CC^{2^n}$ is the part of $\psi\in \ell^2(\GG_2)$ in the $n$-th shell. We let $\Delta^{(2)}$ be the negative adjacency operator on $\ell^2(\GG_2)$ and define $H^{(2)}_\omega$ as above,
$$
(\Delta^{(2)} \psi)_{n,j}=-\psi_{n-1,\lceil j/2 \rceil}-\psi_{n+1,2j-1}-\psi_{n+1,2j}\,, \quad
(H^{(2)}_\omega \psi)_n\,=\,(\Delta^{(2)} \psi)_n\,+\, V_n(\omega) \psi_n\,.
$$
For $n=0$ we have again that $\psi_{n-1,j}=0$. With $\lceil j/2 \rceil$ we denote the smallest integer which is bigger or equal to $j/2$.
In terms of a graph structure it looks like this, where again the vertical stripes are random graphs describing $V_n$.
\begin{center}
\includegraphics[width=7cm]{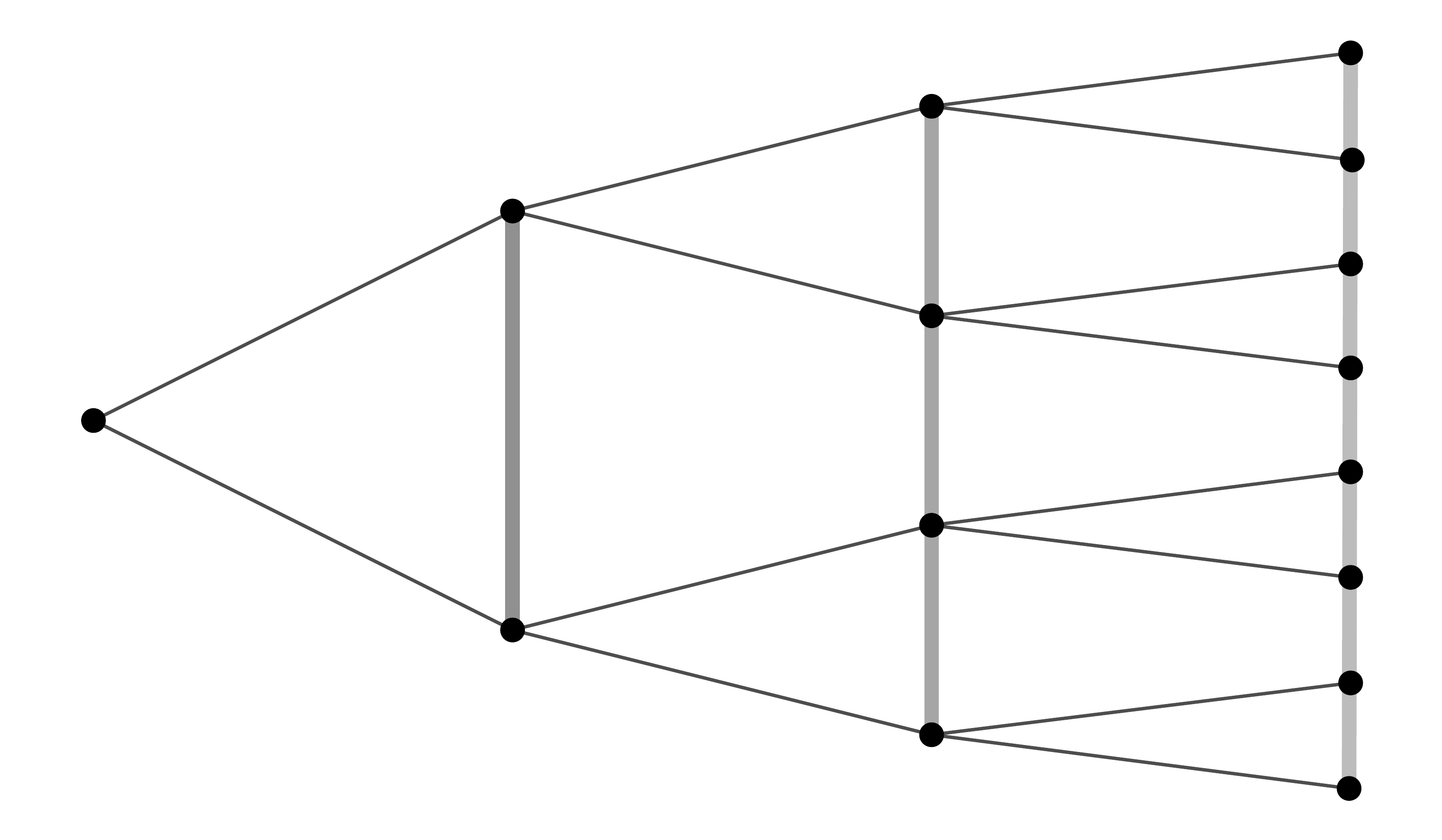}
\end{center}

The spectrum of $\Delta^{(2)}$ is absolutely continuous and given by the interval $[-2\sqrt{2}\,,\,2\sqrt{2}]$. In the interior we find absolutely continuous spectrum of $H^{(2)}_\omega$.
\begin{Theo}\label{Theo-6}
Almost surely, the random operator $H^{(2)}_\omega$ has purely absolutely continuous spectrum in $(-2\sqrt{2},2\sqrt{2})$, that is to say that almost surely,
$$
(-2\sqrt{2}\,,\, s\sqrt{2})\,\subset\,\spec_{ac}(H^{(2)}_\omega)$$
and
$$
\spec_{pp}(H^{(2)}_\omega) \cap (-2\sqrt{2}\,,\,2\sqrt{2})=\emptyset\,,\quad
\spec_{sc}(H^{(2)}_\omega)\cap (-2\sqrt{2}\,,\, 2 \sqrt{2}) = \emptyset$$
\end{Theo}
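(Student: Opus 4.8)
The idea is to deduce Theorem~\ref{Theo-6} from Theorem~\ref{Theo-5} by a unitary change of basis that unfolds the binary tree into a disjoint family of one--dimensional wires, after which $H^{(2)}_\omega$ becomes a constant multiple of an operator of exactly the type treated in Theorem~\ref{Theo-5} with $s_n=2^n$ and all mean energies equal to $0$. First I would construct an orthonormal basis of $\ell^2(\GG_2)$ organised into wires: the \emph{radial wire} $f^0_k:=2^{-k/2}\,\one_{S_k}$ for $k\geq0$, and, for each $m\geq1$ and each vertex $v$ of generation $m-1$ with children $w_1,w_2$ and descending subtrees $T(w_1),T(w_2)$, a \emph{difference wire} $f^v_k$ for $k\geq m$, namely the normalisation of the function taking the value $+1$ on $T(w_1)\cap S_k$, the value $-1$ on $T(w_2)\cap S_k$, and $0$ elsewhere. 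These vectors are manifestly orthonormal --- wires attached to incomparable vertices have disjoint supports, and a difference attached to a descendant integrates to zero over the region where a difference attached to an ancestor is constant --- and complete by the dimension count $1+\sum_{i=0}^{n-1}2^i=2^n=s_n$ of basis vectors supported on $S_n$; moreover each $f^v_k$ is supported on $S_k$. The computational heart of the argument is the identity $\Delta^{(2)}f^\bullet_k=-\sqrt2\,f^\bullet_{k-1}-\sqrt2\,f^\bullet_{k+1}$, valid for every wire with the convention $f^\bullet_{k_0-1}=0$ at its onset generation $k_0$: the parent term vanishes, the two children contribute with equal weight and opposite sign, and the $2^{k-m}$--fold multiplicities inside each subtree convert the adjacency hopping $1$ into the Jacobi hopping $\sqrt2$.

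Enumerate the wires as $j=1,2,3,\dots$ so that the radial wire is $j=1$ and the $2^{m-1}$ difference wires of generation $m-1$ receive the labels $2^{m-1}+1,\dots,2^m$; then wire $j$ begins at shell $\lceil\log_2 j\rceil$, which is precisely the shell at which the $j$-th wire of $\GG_1$ appears when $s_n=2^n$. Let $U\colon\ell^2(\GG_2)\to\ell^2(\GG_1)$, with this choice $s_n=2^n$, be the unitary sending the $k$-th vector of the $j$-th wire to $\delta_{(k,j)}$. Since each such vector lies in $\ell^2(S_k)$ and maps into $\ell^2(S_k)$, the operator $U$ carries $\ell^2(S_n)$ of $\GG_2$ onto $\ell^2(S_n)$ of $\GG_1$; write $U_n$ for the restriction, a non-random unitary $2^n\times2^n$ matrix. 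The identity of the previous paragraph reads $U\Delta^{(2)}U^*=\sqrt2\,\Delta^{(1)}$ for the $\GG_1$-operator with all $a_j=0$, while the block-diagonal potential transforms by $U\big(\bigoplus_nV_n(\omega)\big)U^*=\bigoplus_nU_nV_n(\omega)U_n^*$. Therefore
$$
U\,H^{(2)}_\omega\,U^*\;=\;\sqrt2\,\Big(\Delta^{(1)}+\bigoplus_n\tfrac1{\sqrt2}\,U_nV_n(\omega)U_n^*\Big)\;=:\;\sqrt2\,H'_\omega\,,
$$
and $H'_\omega$ is precisely an operator of the form considered in Theorem~\ref{Theo-5}: the widths are $s_n=2^n$, the mean energies are $a_j\equiv0$ (so $\sup_j a_j-\inf_j a_j=0<4$), and the random shell potentials $V'_n:=\tfrac1{\sqrt2}U_nV_n(\omega)U_n^*\in\Her(2^n)$ are independent (each $V'_n$ being a measurable function of $V_n$ alone) and satisfy \eqref{eq-V-estimate}, since $\|\EE(V'_n)\|=\tfrac1{\sqrt2}\|\EE(V_n)\|$, $\EE\|V'_n\|^2=\tfrac12\EE\|V_n\|^2$ and $\EE\|V'_n\|^4=\tfrac14\EE\|V_n\|^4$.

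Applying Theorem~\ref{Theo-5} to $H'_\omega$ now yields, for $\PP$-almost every $\omega$, that $I_1=\bigcap_j(-2,2)=(-2,2)$ satisfies $(-2,2)\subset\spec_{ac}(H'_\omega)$ and $(-2,2)\cap\spec_{pp}(H'_\omega)=(-2,2)\cap\spec_{sc}(H'_\omega)=\emptyset$. Unitary conjugation preserves the absolutely continuous, singular continuous and pure point parts of the spectrum, and multiplication by the scalar $\sqrt2>0$ transports each of these parts forward under the affine bijection $\lambda\mapsto\sqrt2\,\lambda$ of $\RR$, which preserves and reflects Lebesgue-null sets; hence $\spec_{ac}(H^{(2)}_\omega)=\sqrt2\,\spec_{ac}(H'_\omega)\supseteq(-2\sqrt2,2\sqrt2)$ and $\spec_{pp}(H^{(2)}_\omega)\cap(-2\sqrt2,2\sqrt2)=\spec_{sc}(H^{(2)}_\omega)\cap(-2\sqrt2,2\sqrt2)=\emptyset$, which is the assertion. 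I expect the only genuinely delicate step to be the explicit verification that $\Delta^{(2)}$ diagonalises into the stated family of identical half-line Jacobi wires of hopping $\sqrt2$ with exactly these onset shells --- in particular the multiplicity bookkeeping producing the factor $\sqrt2$ and the boundary case $k=m$ where the parent term drops so that the wire really starts at generation $m$; everything downstream of that identity is routine.
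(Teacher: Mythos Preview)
Your proof is correct and follows essentially the same route as the paper: both construct the Haar-type orthonormal basis on the tree (your radial wire is the paper's $\chi_k$ and your difference wires are the paper's $\zeta$-vectors), verify the identity $\Delta^{(2)}f^\bullet_k=-\sqrt2\,f^\bullet_{k-1}-\sqrt2\,f^\bullet_{k+1}$, and conclude that a shell-preserving unitary conjugates $H^{(2)}_\omega$ to $\sqrt2\,\Delta^{(1)}$ plus a conjugated shell potential satisfying \eqref{eq-V-estimate}, whence Theorem~\ref{Theo-5} applies. Your handling of the scalar $\sqrt2$ by absorbing $1/\sqrt2$ into $V'_n$ so that Theorem~\ref{Theo-5} applies verbatim is in fact slightly tidier than the paper's, which leaves the operator in the form $\sqrt2\,\Delta^{(1)}+V'$ and invokes Theorem~\ref{Theo-5} a bit loosely.
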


Another special case of Theorem~\ref{Theo-5} gives a result on random decaying matrix potentials on a strip. For uniform compactly supported matrix potentials this result has been already proved in \cite{FHS3}.
Therefore, let $\GG_3=\NN_0 \times S$ be the product graph of a finite graph $S$ and the half-line $\NN_0$, hence $\ell^2(\GG_3)=\ell^2(\NN_0)\otimes \ell^2(S)=\ell^2(\NN_0,\CC^s)$ and we choose $S_n=\{n\}\times S$ and may use $S=\{1,\ldots,s\}$. 
Then, a vector $\psi$ in $\ell^2(\GG_3)$ is given by a sequence $(\psi_n)_{n=0}^\infty$ with $\psi_n\in \ell^2(S)\cong \CC^s$ where $s=\#(S)$.
The unperturbed Laplacian on $\GG_3$ is given by
\begin{equation} \label{eq-H0}
(\Delta^{(3)} \psi)_n\,=\, -\psi_{n-1}-\psi_{n+1}+A\psi_n\,
\end{equation}
where formally $\psi_{-1}=0$ and $A\in \Her(s)$ is some basic Hermitian adjacency or Laplace operator on the graph $S$, possibly complex (with magnetic phases). 
Let $A$ have the eigenvalues $a_1,\ldots, a_s \in\RR$, then the spectrum of $\Delta^{(1)}$ is given by the union of $s$ bands:
$$
I\,:=\,\spec(\Delta^{(3)})\,=\,\bigcup_{j=1}^s [a_j-2, a_j+2]\,
$$
We will also consider the intersection of the open bands as before
$$
I_3\,:=\,\bigcap_{j=1}^s (a_j-2,a_j+2)\,=\,(-2+\max_j a_j, 2+\min_j a_j)\,.
$$
 Then, as before, we add a random, independent, decaying matrix potential $V_\omega=\bigoplus_n V_n(\omega)$ satisfying \eqref{eq-V-estimate} and consider the random operator
$H^{(3)}_\omega=\Delta^{(3)}+V_\omega$ given by
$$
(H^{(3)}_\omega \psi)_n\,=\,-\psi_{n-1}-\psi_{n+1}+A\psi_n+V_n(\omega) \psi_n\,.
$$
\begin{Theo}\label{Theo-7}
{\rm (i)} The random operator $H^{(3)}_\omega$ has, almost surely, 
purely absolutely continuous spectrum in the intersection of the open bands $I_3$, that is, for $\PP$-almost all $\omega\in \Omega$,
$$
I_3\subset \spec_{ac}(H^{(3)}_\omega)\,,\quad
I_3\cap \spec_{pp}(H^{(3)}_\omega)=\emptyset\,,\quad
I_3\cap \spec_{sc}(H^{(3)}_\omega)=\emptyset
$$
{\rm (ii)} Almost surely, $I=\spec_{ac}(H^{(3)}_\omega)=\spec_{ess}(H^{(3)}_\omega)$, that is, the absolutely continuous spectrum of $H^{(3)}_\omega$ is equal to the union of bands $I$ which is also equal to the essential spectrum of $H^{(3)}_\omega$, almost surely.
\end{Theo}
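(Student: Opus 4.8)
The plan is to reduce both parts to Theorem~\ref{Theo-5} and the criteria of Theorems~\ref{Theo-1a} and \ref{Theo-4}, after diagonalising the adjacency matrix $A$, and to pin down the essential spectrum by a compactness argument. First I would write $A=U\diag(a_1,\dots,a_s)U^{*}$ with $U$ unitary and conjugate $H^{(3)}_\omega$ by the shell-wise unitary $\bigoplus_{n\ge0}U$; this turns $H^{(3)}_\omega$ into $\Delta+\widetilde V_\omega$, where $\Delta$ is precisely the decoupled-wire operator $\Delta^{(1)}$ of Theorem~\ref{Theo-5} with constant width $s_n\equiv s$ (so $\GG_1=\GG_3$ and $I_1=I_3$) and $\widetilde V_n=U^{*}V_n(\omega)U$. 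The $\widetilde V_n$ are again independent, and because the matrix norm and $\EE$ commute with the fixed conjugation they again satisfy \eqref{eq-V-estimate}. Since $\sum_n\EE\|V_n\|^2<\infty$ forces $\sum_n\|V_n(\omega)\|^2<\infty$ almost surely, $\|\widetilde V_n\|\to0$ a.s., so $\Delta+\widetilde V_\omega$ is a.s.\ bounded self-adjoint with its natural domain and assumptions (A1)--(A2) hold (with $\Phi_n=\Upsilon_n=\II_s$, $r_n=s$ for $n\ge1$, $r_0=1$). Unitary conjugation preserves all spectral types, so it suffices to argue for $\Delta+\widetilde V_\omega$. Part~(i) is then immediate: if $I_3=\emptyset$ nothing is claimed, and otherwise $\max_j a_j-\min_j a_j<4$, which is exactly the hypothesis of Theorem~\ref{Theo-5}, so that theorem gives purely absolutely continuous spectrum in $I_1=I_3$.

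For Part~(ii) I would first establish $\spec_{ess}(H^{(3)}_\omega)=I$ almost surely: $V_\omega=\bigoplus_n V_n(\omega)$ is, a.s., the operator-norm limit of its finite-rank truncations $\bigoplus_{n\le N}V_n(\omega)$ (as $\sup_{n>N}\|V_n(\omega)\|\to0$), hence compact, so Weyl's theorem gives $\spec_{ess}(H^{(3)}_\omega)=\spec_{ess}(\Delta^{(3)})$. In the $A$-eigenbasis $\Delta^{(3)}=\bigoplus_{j=1}^{s}(L_0+a_j)$ with $L_0$ the free half-line Laplacian on $\ell^2(\NN_0)$, whose spectrum is the purely absolutely continuous interval $[-2,2]$ and has no isolated points; thus $\spec(\Delta^{(3)})=\spec_{ess}(\Delta^{(3)})=\bigcup_j[a_j-2,a_j+2]=I$. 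Since $\spec_{ac}\subset\spec_{ess}$ always, this already gives $\spec_{ac}(H^{(3)}_\omega)\subset I$ a.s.

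It remains to prove $I\subset\spec_{ac}(H^{(3)}_\omega)$ a.s. Because $\spec_{ac}$ is closed it suffices to show $[c,d]\subset\spec_{ac}(H^{(3)}_\omega)$ for every band index $k$ and every compact $[c,d]\subset(a_k-2,a_k+2)$, which I would do via Theorem~\ref{Theo-1a}(i). In the diagonalised variables the free transfer matrix $T^\lambda_n|_{V_n=0}=\smat{A-\lambda\II_s & -\II_s\\ \II_s & \nul}$ splits into the $2\times2$ blocks $\smat{a_j-\lambda & -1\\ 1 & 0}$; for $\lambda\in[c,d]$ the $k$-th block (and every $j$ with $|a_j-\lambda|<2$) is elliptic, while the remaining ``hyperbolic'' blocks have an eigenvalue of modulus $>1$. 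For $n\ge1$ the set $\TT^\lambda_n$ is a singleton (Remark~\ref{rem-T}(i)), the only freedom being at $n=0$, where $\D^\lambda_0$ is an affine subspace of $\CC^{2s}$ of dimension $s-1$ (one normalisation equation on $B\in\CC^{s}$ since $r_0=1$). Since the $k$-th channel is elliptic, the hyperbolic--unstable subspace of the perturbed product has dimension $h\le s-1$, so for a suitable root vector $\Upsilon_0$ and Lebesgue-a.e.\ $\lambda\in[c,d]$ one can solve the $h+1$ linear conditions selecting a point of $\D^\lambda_0$ whose image under the infinite product lies on the stable--plus--elliptic manifold; with that choice $T^\lambda_{0,n}\smat{1\\0}$ is bounded in $n$ for $\Delta$, and the summability \eqref{eq-V-estimate} lets one run the Pr\"ufer-variable / Last--Simon argument of the proof of Theorem~\ref{Theo-5} — now only on the elliptic block, with the cross-channel coupling treated perturbatively and the drift controlled by $\sum_n\|\EE V_n\|<\infty$ — to obtain $\sup_k\|T^\lambda_{0,n_k}\smat{1\\0}\|<\infty$ for a.e.\ $\lambda\in[c,d]$ along a subsequence. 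Theorem~\ref{Theo-1a}(i) then yields $[c,d]\subset\supp\mu_{ac}\subset\spec_{ac}(H^{(3)}_\omega)$ a.s.; taking $[c,d]\uparrow(a_k-2,a_k+2)$, the union over $k$, and the closure gives $I\subset\spec_{ac}(H^{(3)}_\omega)$.

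The routine parts are the reduction, Part~(i), and the essential-spectrum computation. The crux is the last step: extending the heart of the proof of Theorem~\ref{Theo-5} to energies at which only some of the wires are open. Concretely this means (a) using the $(s-1)$ degrees of freedom in the $n=0$ transfer matrix to annihilate the $h\le s-1$ unstable hyperbolic modes, and (b) propagating the a.e.\ boundedness estimate through a single elliptic channel while keeping the coupling to the remaining channels under control via the $\ell^2$-type hypothesis \eqref{eq-V-estimate}; both (a) and (b) must be carried out measurably in $\lambda$ and almost surely in $\omega$.
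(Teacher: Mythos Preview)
Your reduction for part~(i) and the essential-spectrum computation are correct and are essentially the paper's own argument (the paper observes that $V_\omega$ is Hilbert--Schmidt rather than merely compact, but either suffices for Weyl's theorem here).

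For part~(ii), however, your route diverges sharply from the paper's, and the hardest step is only sketched. The paper does \emph{not} attempt a transfer-matrix argument at energies where some channels are hyperbolic. Instead, for each subset $J\subset\{1,\dots,s\}$ it considers the energy set $I_J$ on which exactly the channels in $J$ are open, and writes $H^{(3)}_\omega$ in block form with respect to the splitting $\ell^2(\NN_0\times J)\oplus\ell^2(\NN_0\times\complement J)$. The $J$-block is itself a strip operator on $|J|$ wires, to which part~(i) applies directly (pure a.c.\ spectrum throughout $I_J$); the $\complement J$-block has no essential spectrum in $I_J$; and the off-diagonal blocks come solely from $V_\omega$ and are Hilbert--Schmidt by~\eqref{eq-V-estimate}. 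The inclusion $I_J\subset\spec_{ac}(H^{(3)}_\omega)$ then follows from the black-box perturbation result \cite[Theorem~6]{FHS3}, and the union over $J$ is dense in $I$. No control of $2s\times2s$ transfer-matrix products at partially hyperbolic energies is ever needed.

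Your plan instead uses the $(s-1)$-dimensional freedom in $\D^\lambda_0$ to land on a ``stable-plus-elliptic manifold'' of the perturbed product and then runs the Pr\"ufer estimate on the elliptic block. The genuine gap is your step~(b). Even granting a perturbed stable manifold of the correct dimension for the random product (already nontrivial for $\ell^2$-type rather than $\ell^1$ decay, which is all~\eqref{eq-V-estimate} provides), you must still show that along an orbit on this manifold the elliptic components remain bounded: each $V_n$ couples the elliptic block to the decaying-but-nonzero hyperbolic components, and the resulting inhomogeneous elliptic recursion has to be controlled jointly with the Last--Simon martingale estimate, measurably in $\lambda$ and almost surely in $\omega$. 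Carrying this out would essentially amount to reproving, by transfer-matrix methods, the perturbation lemma the paper simply cites.
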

\begin{rem}
{\rm (i)} Note that the result is actually slightly more general than \cite{FHS3} as they have the additional assumptions that the distribution of all matrix potentials is supported in one compact set $K$.
Under these additional assumption, the bound $\sum_n \EE(\|V_n\|^4)<\infty$ follows automatically from
$\sum_n \EE(\|V_n\|^2)<\infty$.\\
{\rm (ii)} We do not prove pure absolutely continuous spectrum in part (ii).
As a set, the essential spectrum and the absolutely continuous spectrum are the same. We expect the absolutely continuous spectrum to be pure away from (internal and external) band edges. This will be dealt with elsewhere.
\end{rem}


\section{The algebra of the boundary resolvent data}

We want to consider the partial semi-group structure associated to the boundary resolvent data. For this reason we introduce the following binary operation between suitable matrices.
\begin{defini}\label{def-1}
\begin{enumerate}[{\rm (i)}]
\item We let $\Mm(q,r)$ denote
the set of $(q+r) \times (q+r)$ matrices $Q$ with the assigned $(q,r)$ splitting.
Such a matrix will be considered as a collection of 4 blocks (matrices) of size 
$q \times q$, $q\times r$, $r\times q$ and $r\times r$. This means,
$$
\Mm(q,r)\,=\,\left\{ Q=\pmat{\alpha & \beta \\ \gamma & \delta}\,\in\,\CC^{(q+r)\times (q+r)}\,:\, \alpha \in \CC^{q\times q},\, \beta \in \CC^{q\times r},\, \gamma \in \CC^{r\times q},\,\delta\in \CC^{r\times r}\right\}
$$
\item Let $Q\,\in\, \Mm(q,r)$ and $R\,\in\,\Mm(r,s)$ with the corresponding splittings
$$
Q=\pmat{\alpha & \beta \\ \gamma & \delta}\;,\quad R=\pmat{\tilde \alpha & \tilde \beta \\ \tilde \gamma & \tilde\delta}\;.
$$  
We say that the ordered pair $(Q,R)$ is $\trr_r$ suitable if $\II_r-\alpha\tilde \delta$ is invertible (in which case also $\II_r-\tilde \delta\alpha$ is invertible) where $\II_r$ denotes the $r\times r$ identity matrix. 

If $(Q,R)$ is $\trr_r$ suitable then we define 
$$
Q\trr_r R\,=\,
\pmat{ \alpha & \beta \\ \gamma & \delta}\;\trr_r\; \pmat{\tilde \alpha & \tilde \beta \\ \tilde \gamma & \tilde \delta}
\,:=\,
\pmat{\hat \alpha & \hat \beta \\ \hat \gamma & \hat \delta}\;\in\;\Mm(q,s)
$$
by
\begin{align} \label{eq-sg-1}
\hat \alpha\,&:=\, \alpha\,+\,\beta\,(\II-\tilde \alpha \delta)^{-1}\, \tilde\alpha\,\gamma\;,\quad &
\hat \beta\,:=\,\beta\,(\II-\tilde\alpha \delta)^{-1}\, \tilde \beta \\
\label{eq-sg-2}
\hat \delta\,&:=\,\tilde \delta\,+\,\tilde \gamma\,(\II- \delta \tilde \alpha)^{-1}\, \delta\,\tilde \beta\;, \quad &\hat \gamma\,:=\,\tilde \gamma\,(\II-\delta \tilde \alpha)^{-1}\, \gamma\;.
\end{align}
The index $r$ indicates that the operation $\triangleright_r$ does in fact depend on the splitting of the matrices $Q$ and $R$ and particularly on $r$, the size of the second assigned diagonal block of $Q$ or the first diagonal block of $R$. If the blocks are clear we may omit the index $r$. 
\item Furthermore, we define the following subset of $\Mm(q,r)$
$$
\Mm(q,r,+)\,:=\,\left\{Q=\pmat{\alpha & \beta \\ \gamma& \delta} \in \Mm(q,r)\,:\,  \Im(\alpha)>0,\, \Im(\delta)>0,\,\Im(Q)\geq 0 \right\}
$$
where $\Im(M)\,:=\,\frac{1}{2\imath} (M-M^*)$ denotes the imaginary part in the sense of $C^*$ algebra operations for square matrices $M$. Similarly, we let $\Mm(q,r,-)=-\Mm(q,r,+)$.
\end{enumerate}
\end{defini}

We use this operation for going forward along the shells in the graph.
The symbol $\trr$ shall indicate this radial move going from a lower shell in the graph outward, cf. Proposition~\ref{prop-bd}.

\begin{prop}\label{prop-sg}(Associativity)
\begin{enumerate}[{\rm a)}]
\item Let be given a $(q+r)\times (q+r)$ matrix $Q$, a $(r+s)\times(r+s)$ matrix $R$ and an $(s+t)\times (s+t)$ matrix $S$.
Then, if all the appearing combinations below are suitable, then
$$
(Q\,\trr_r R) \trr_s S\,=\,Q\,\trr_r\, (R\,\trr_s\, S) 
$$
and therefore we may define this to be $Q \trr_r R \trr_s S$. If the splitting of $Q,R,S$ is clear, we may simply write 
$Q\trr R \trr S$.
\item If $Q\in \Mm(q,r,\pm)$, $R\in \Mm(r,s,\pm)$ then $(Q,R)$ is $\trr_r$ suitable and $Q\trr_r R\in \Mm(q,s,\pm)$.
Particularly, $\big( \bigcup_{q,r} \Mm(q,r,\pm) ,\trr\,\big)$ are partial semi-groups.
\end{enumerate}
\end{prop}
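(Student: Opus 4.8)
The plan is to recognize $\trr_r$ as the \emph{Redheffer star product} attached to the series (cascade) connection of two input--output systems, and to push both parts through this picture. Read $Q=\smat{\alpha&\beta\\\gamma&\delta}\in\Mm(q,r)$ as the map sending incoming amplitudes $a_1\in\CC^q$ at a ``left port'' and $a_2\in\CC^r$ at a ``right port'' to outgoing amplitudes $b_1=\alpha a_1+\beta a_2$, $b_2=\gamma a_1+\delta a_2$. A one-line elimination shows that, whenever $(Q,R)$ is suitable, the matrix $Q\trr_r R$ defined by \eqref{eq-sg-1}--\eqref{eq-sg-2} is exactly the map $\smat{a_1\\a_2}\mapsto\smat{b_1\\b_2}$ obtained by coupling the right port of $Q$ to the left port of $R$ with reversed orientation, i.e.\ by solving $\smat{b_1\\x}=Q\smat{a_1\\y}$ and $\smat{y\\b_2}=R\smat{x\\a_2}$ for the internal vectors $x,y\in\CC^r$; this system is uniquely solvable precisely when $\II-\delta\tilde\alpha$ is invertible (equivalently $\II-\tilde\alpha\delta$, by the push-through identity $A(\II-BA)^{-1}=(\II-AB)^{-1}A$), which is the suitability condition, and the formulas \eqref{eq-sg-1}--\eqref{eq-sg-2} simply fall out of reading off the resulting map.

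For a), I would use this dictionary to reduce associativity (classically known for the Redheffer star product) to the order-independence of block Gaussian elimination. Cascading three systems $Q\in\Mm(q,r)$, $R\in\Mm(r,s)$, $S\in\Mm(s,t)$ produces a single linear system in the four internal vectors $x_1,y_1\in\CC^r$ (the $Q$--$R$ junction) and $x_2,y_2\in\CC^s$ (the $R$--$S$ junction) driven by the two external inputs; forming $(Q\trr_r R)\trr_s S$ corresponds to eliminating $(x_1,y_1)$ first and then $(x_2,y_2)$, while $Q\trr_r(R\trr_s S)$ eliminates them in the reverse order. Since each step is a block Schur-complement step on the \emph{same} coefficient matrix, both orders produce its unique reduced form, hence the same element of $\Mm(q,t)$, which we may then call $Q\trr_r R\trr_s S$. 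The point that needs care — and is the main obstacle in this part — is that the stated bundle of hypotheses (all of $(Q,R)$, $(R,S)$, $(Q\trr_r R,S)$, $(Q,R\trr_s S)$ suitable) is exactly what licenses these elimination steps: suitability of $(Q,R)$ together with suitability of $(Q\trr_r R,S)$ already forces the full coefficient matrix to be invertible, so both elimination orders are legitimate and, literally, agree. (A brute-force verification of $(Q\trr_r R)\trr_s S=Q\trr_r(R\trr_s S)$ by repeated use of the push-through identity is also available but long and unilluminating.)

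For b), suitability is immediate: from $Q\in\Mm(q,r,+)$ one has $\Im\delta>0$ and from $R\in\Mm(r,s,+)$ one has $\Im\tilde\alpha>0$, so $\tilde\alpha$ is invertible with $\Im(\tilde\alpha^{-1})<0$, whence $\tilde\alpha^{-1}-\delta$ has negative imaginary part and is therefore invertible, and so is $\II-\tilde\alpha\delta=\tilde\alpha(\tilde\alpha^{-1}-\delta)$ (and $\II-\delta\tilde\alpha$ likewise) — this is the argument already used just before Proposition~\ref{prop-1}, and the ``$-$'' case follows by negation. With the cascade picture in hand, preservation of positivity is a flux-conservation identity: for $w=\smat{a_1\\a_2}$ and $\smat{b_1\\b_2}=(Q\trr_r R)w$ with internal vectors $x,y$ as above,
\[
\big\langle w,\Im(Q\trr_r R)\,w\big\rangle=\Im\big\langle\smat{a_1\\y},Q\smat{a_1\\y}\big\rangle+\Im\big\langle\smat{x\\a_2},R\smat{x\\a_2}\big\rangle-\Im\big(\langle y,x\rangle+\langle x,y\rangle\big),
\]
and the last bracket vanishes because $\langle y,x\rangle+\langle x,y\rangle$ is real; hence $\Im(Q\trr_r R)\ge0$ (and $\le0$ in the ``$-$'' case). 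The remaining — and slightly delicate — point is the \emph{strict} inequalities for the corner blocks: taking $a_2=0$, if $\Im\langle a_1,\hat\alpha a_1\rangle=0$ then both nonnegative terms above must vanish, $\Im\langle x,\tilde\alpha x\rangle=0$ forces $x=0$ (since $\Im\tilde\alpha>0$), then $y=\tilde\alpha x=0$ and $b_1=\alpha a_1$, so $\Im\langle a_1,\alpha a_1\rangle=0$ forces $a_1=0$ (since $\Im\alpha>0$); thus $\Im\hat\alpha>0$, and symmetrically $\Im\hat\delta>0$ using $\Im\delta>0$ at the $Q$-side and $\Im\tilde\delta>0$ at the $R$-side. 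This gives $Q\trr_r R\in\Mm(q,s,\pm)$, and combined with a) it exhibits $\big(\bigcup_{q,r}\Mm(q,r,\pm),\trr\big)$ as partial semigroups.
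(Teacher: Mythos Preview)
Your proof is correct and takes a genuinely different route from the paper's.

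For part a), the paper does not work with the cascade picture directly. Instead it proves a preparatory lemma (Lemma~\ref{lem-id-trr}) expressing $Q\trr_r R$ as a corner of the inverse of a $2\times 2$ block matrix built from $\Gamma_1^{-1}=Q^{-1}$, $\Gamma_2^{-1}=R^{-1}$ and off-diagonal coupling $-\Phi\widetilde\Upsilon^*$. Applying this lemma twice in two different groupings exhibits both $(Q\trr R)\trr S$ and $Q\trr(R\trr S)$ as the \emph{same} corner of the \emph{same} $3\times 3$ block inverse, which is morally the coefficient matrix of your cascade system. The paper then needs a continuity argument to cover the case where $Q,R,S$ are not invertible. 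Your Redheffer/elimination viewpoint sidesteps this continuity step and is more self-contained; on the other hand the paper's Lemma~\ref{lem-id-trr} is reused verbatim to prove Proposition~\ref{prop-bd}, so its approach unifies associativity and the resolvent identity under one computation.

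For part b), the paper again leans on Lemma~\ref{lem-id-trr}: it first gets $\Im(Q\trr R)>0$ when $\Im Q>0$ and $\Im R>0$ (the inverse of a matrix with negative imaginary part has positive imaginary part), then passes to $\Im(Q\trr R)\geq 0$ by continuity. For the strict corner inequalities the paper introduces the auxiliary matrix $P=Q+\smat{\nul&\nul\\\nul&-\tilde\alpha^{-1}}$, argues directly that $\Im P>0$, and reads off $\Im\hat\alpha>0$ from a Schur-complement identity. Your flux-conservation identity is cleaner: it gives $\Im(Q\trr R)\geq 0$ and the strict positivity of $\Im\hat\alpha$, $\Im\hat\delta$ in one stroke, without continuity and without inverting $Q$ or $R$. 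Both arguments use the same ingredients ($\Im\alpha>0$, $\Im\tilde\alpha>0$, etc.) at the endgame; yours just packages them more transparently.
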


Before coming to the proof of this proposition, let us state the main fact for this definition.

\begin{prop} \label{prop-bd}
For $z\in \CC$, $z\not \in \RR$ and all $m\leq n$ we have
$$
R^z_{m,n}\,\in\,\begin{cases}  \Mm(r_m, r_{n+1},+)\ & \text{if}\;\;\im(z)>0 \\ 
\Mm(r_m,r_{n+1},-) & \text{if}\;\;\im(z)<0 \end{cases}
$$ 
Furthermore,  for $l\leq m \leq n-1$, $z\in\CC$, $z\not \in A_{l,m} \cup A_{m+1,n} \cup B_{l,m,n}$ we have
$$
R^z_{l,n}\,=\,R^z_{l,m}\,\trr_{r_{m+1}}\, R^z_{m+1,n}\;.
$$
\end{prop}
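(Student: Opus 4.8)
The plan is to prove the two assertions separately; the first is immediate and the second carries the content.

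\textbf{Sign of the imaginary part.} For $z$ with $\im(z)\neq 0$ I would write $R^z_{m,n}=X^{*}(H_{m,n}-z)^{-1}X$, where $X=\bigl(\,P_{m,n}^{*}P_m\Upsilon_m\ \big|\ P_{m,n}^{*}P_n\Phi_n\,\bigr)$ is the matrix whose columns are the embedded columns of $\Upsilon_m$ and $\Phi_n$. Since $H_{m,n}$ is a finite Hermitian matrix, the first resolvent identity gives $\Im\bigl((H_{m,n}-z)^{-1}\bigr)=\im(z)\,(H_{m,n}-z)^{-1}\bigl((H_{m,n}-z)^{-1}\bigr)^{*}$, which is strictly positive (resp.\ negative) definite for $\im(z)>0$ (resp.\ $<0$) because the resolvent is invertible. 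Conjugating by $X$ yields $\Im(R^z_{m,n})=X^{*}\Im\bigl((H_{m,n}-z)^{-1}\bigr)X$, so $\Im(R^z_{m,n})$ has the correct sign ($\geq 0$ or $\leq 0$), and it is strictly of that sign on the two diagonal blocks because $P_{m,n}^{*}P_m\Upsilon_m$ and $P_{m,n}^{*}P_n\Phi_n$ have full column rank $r_m$ and $r_{n+1}$ (the embeddings are injective and $\Upsilon_m,\Phi_n$ are of full rank by construction). Hence $R^z_{m,n}\in\Mm(r_m,r_{n+1},\pm)$ according to the sign of $\im(z)$.

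\textbf{The composition formula, set-up.} For the identity $R^z_{l,n}=R^z_{l,m}\trr_{r_{m+1}}R^z_{m+1,n}$ I would split $\ell^2(\GG_{l,n})=\ell^2(\GG_{l,m})\oplus\ell^2(\GG_{m+1,n})$. By \eqref{eq-cond-sph} the only coupling between the two summands comes from $W_{m+1}=-\Upsilon_{m+1}\Phi_m^{*}$, so writing $\Phi:=P_{l,m}^{*}P_m\Phi_m$ and $\Upsilon:=P_{m+1,n}^{*}P_{m+1}\Upsilon_{m+1}$ for the embedded versions (both of full column rank $r_{m+1}$, supported in $\ell^2(\GG_{l,m})$ resp.\ $\ell^2(\GG_{m+1,n})$) one has the $2\times 2$ block form $H_{l,n}-z=\smat{H_{l,m}-z & -\Phi\Upsilon^{*}\\ -\Upsilon\Phi^{*} & H_{m+1,n}-z}$. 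The three observations to carry along are $\Upsilon^{*}(H_{m+1,n}-z)^{-1}\Upsilon=\alpha^z_{m+1,n}$, $\Phi^{*}(H_{l,m}-z)^{-1}\Phi=\delta^z_{l,m}$, and that the embedded columns of $\Upsilon_l$ (resp.\ $\Phi_n$) into $\ell^2(\GG_{l,n})$ lie in the first (resp.\ second) summand.

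\textbf{The composition formula, computation and extension.} Working first on $\{z\in\CC:\im(z)\neq 0\}$, where all resolvents exist, I would apply the block (Schur-complement) inverse formula to the displayed matrix. The two Schur complements are the low-rank perturbations $S_1=(H_{l,m}-z)-\Phi\,\alpha^z_{m+1,n}\,\Phi^{*}$ and $S_2=(H_{m+1,n}-z)-\Upsilon\,\delta^z_{l,m}\,\Upsilon^{*}$, to which the Woodbury identity applies; the inner ``capacitance'' inverse that appears is $\bigl(\II-\delta^z_{l,m}\alpha^z_{m+1,n}\bigr)^{-1}$, which for $\im(z)\neq 0$ exists by the $C^{*}$-imaginary-part computation recorded just before Proposition~\ref{prop-1} (for real $z$ this invertibility is exactly what excluding $B_{l,m,n}$ provides). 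Sandwiching $(H_{l,n}-z)^{-1}$ between the embedded $\Upsilon_l$ and $\Phi_n$ and using the three observations above, each of the four blocks of $R^z_{l,n}$ comes out as a combination of the blocks of $R^z_{l,m}$ and $R^z_{m+1,n}$, and two push-through identities ($M(\II-NM)^{-1}=(\II-MN)^{-1}M$ and $\II+M(\II-NM)^{-1}N=(\II-MN)^{-1}$) then turn these into precisely the expressions \eqref{eq-sg-1}--\eqref{eq-sg-2} defining $R^z_{l,m}\trr_{r_{m+1}}R^z_{m+1,n}$. Finally, both sides are rational matrix-valued functions of $z$; they agree on $\{z:\im(z)\neq 0\}$, which is dense in $\CC$, hence wherever both are defined — in particular on $\CC\setminus(A_{l,m}\cup A_{m+1,n}\cup B_{l,m,n})$ — and this agreement simultaneously provides the analytic extension certifying $z\notin A_{l,n}$ there.

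\textbf{Main obstacle.} The only genuine difficulty is organisational: correctly tracking all four blocks through the Schur-complement-plus-Woodbury expansion and applying the push-through identities in the right order so the output matches the (slightly asymmetric) definition of $\trr_{r}$ in \eqref{eq-sg-1}--\eqref{eq-sg-2}; sign and transpose slips are the real danger. Consistency of the resulting formula with the associativity of $\trr$ in Proposition~\ref{prop-sg} can be used as a cross-check.
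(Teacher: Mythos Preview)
Your proposal is correct and takes essentially the same route as the paper: the first part via the resolvent imaginary-part identity together with the full column rank of $\Upsilon_m,\Phi_n$, and the second via the block decomposition of $H_{l,n}-z$ followed by a Schur-complement/Woodbury computation and analytic extension to real $z$. The only difference is organizational: the paper isolates the block-inverse computation once and for all as the abstract identity Lemma~\ref{lem-id-trr}, applies it here with $\Gamma_1=(H_{l,m}-z)^{-1}$, $\Gamma_2=(H_{m+1,n}-z)^{-1}$, and then reuses the same lemma to prove associativity of $\trr$ in Proposition~\ref{prop-sg}.
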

 
The crucial parts of these propositions come from the following Lemma.
\begin{lemma}\label{lem-id-trr}
Let $\Gamma_1 \in \CC^{n_1 \times n_1}$ and $\Gamma_2\in \CC^{n_2\times n_2}$ be invertible matrices,
$\Upsilon\in \CC^{n_1\times q}$, $\Phi\in \CC^{n_1 \times r}$, $\tilde \Upsilon \in \CC^{n_2 \times r}$ and $\tilde \Phi \in \CC^{n_2 \times  s}$ such that
$$
Q\,=\,\pmat{\alpha & \beta \\ \gamma & \delta}= \pmat{\Upsilon^* \\ \Phi^*} \Gamma_1 \pmat{\Upsilon & \Phi}\;, \quad
R\,=\,  \pmat{\tilde \alpha & \tilde \beta \\ \tilde \gamma & \tilde \delta}=\pmat{\widetilde \Upsilon^* \\ \widetilde \Phi^* } \Gamma_2 \pmat{\widetilde \Upsilon & \widetilde \Phi},
$$
and let $(Q,R)$ be $\trr_r$ suitable, then
$$
Q \trr_r R\,=\,\pmat{ \Upsilon^* & \nul \\ \nul & \widetilde \Phi^*} \,
\pmat{\Gamma_1^{-1} & -\Phi \widetilde \Upsilon^* \\ -\widetilde \Upsilon \Phi^* & \Gamma_2^{-1}}^{-1}\,
\pmat{ \Upsilon & \nul \\ \nul & \widetilde \Phi }
$$
if the occurring inverse exists.
\end{lemma}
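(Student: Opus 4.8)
The plan is to reduce everything to one block-matrix inversion. Write
\[
G\;:=\;\pmat{\Gamma_1^{-1} & -\Phi\,\widetilde\Upsilon^* \\ -\widetilde\Upsilon\,\Phi^* & \Gamma_2^{-1}}\,,
\]
so that the asserted identity reads $Q\trr_r R=\pmat{\Upsilon^* & \nul \\ \nul & \widetilde\Phi^*}\,G^{-1}\,\pmat{\Upsilon & \nul \\ \nul & \widetilde\Phi}$. The key observation is that $G$ is a perturbation of rank at most $2r$ of a block-diagonal matrix: with $\mathcal G_0:=\pmat{\Gamma_1^{-1} & \nul \\ \nul & \Gamma_2^{-1}}$, $\mathcal U:=\pmat{\Phi & \nul \\ \nul & \widetilde\Upsilon}$ and $\mathcal J:=\pmat{\nul & \II_r \\ \II_r & \nul}$, a direct multiplication gives $G=\mathcal G_0-\mathcal U\,\mathcal J\,\mathcal U^*$. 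Since $\Gamma_1,\Gamma_2$ are invertible, $\mathcal G_0$ is invertible with $\mathcal G_0^{-1}=\pmat{\Gamma_1 & \nul \\ \nul & \Gamma_2}$, and the Woodbury identity yields
\[
G^{-1}\;=\;\mathcal G_0^{-1}\,+\,\mathcal G_0^{-1}\mathcal U\,\bigl(\mathcal J^{-1}-\mathcal U^*\mathcal G_0^{-1}\mathcal U\bigr)^{-1}\mathcal U^*\mathcal G_0^{-1}\,,
\]
valid exactly when the $2r\times 2r$ capacitance matrix $\mathcal J^{-1}-\mathcal U^*\mathcal G_0^{-1}\mathcal U$ is invertible; since $\mathcal J$ and $\mathcal G_0$ are invertible, a standard determinant (Sylvester) identity shows this is equivalent to $G$ itself being invertible, so we are precisely in the regime ``the occurring inverse exists''.

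Next I would compute the ingredients using only the defining relations $\alpha=\Upsilon^*\Gamma_1\Upsilon$, $\beta=\Upsilon^*\Gamma_1\Phi$, $\gamma=\Phi^*\Gamma_1\Upsilon$, $\delta=\Phi^*\Gamma_1\Phi$ and $\widetilde\alpha=\widetilde\Upsilon^*\Gamma_2\widetilde\Upsilon$, $\widetilde\beta=\widetilde\Upsilon^*\Gamma_2\widetilde\Phi$, $\widetilde\gamma=\widetilde\Phi^*\Gamma_2\widetilde\Upsilon$, $\widetilde\delta=\widetilde\Phi^*\Gamma_2\widetilde\Phi$. One finds $\mathcal G_0^{-1}\mathcal U=\pmat{\Gamma_1\Phi & \nul \\ \nul & \Gamma_2\widetilde\Upsilon}$ and $\mathcal U^*\mathcal G_0^{-1}\mathcal U=\pmat{\delta & \nul \\ \nul & \widetilde\alpha}$, whence, using $\mathcal J^{-1}=\mathcal J$, the capacitance matrix equals $\pmat{-\delta & \II_r \\ \II_r & -\widetilde\alpha}$. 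Solving the corresponding $2\times2$ block linear system gives
\[
\pmat{-\delta & \II_r \\ \II_r & -\widetilde\alpha}^{-1}\;=\;\pmat{\widetilde\alpha(\II_r-\delta\widetilde\alpha)^{-1} & (\II_r-\widetilde\alpha\delta)^{-1} \\ (\II_r-\delta\widetilde\alpha)^{-1} & \delta(\II_r-\widetilde\alpha\delta)^{-1}}\,,
\]
which exists precisely when $\II_r-\widetilde\alpha\delta$ (equivalently $\II_r-\delta\widetilde\alpha$) is invertible, i.e.\ precisely under the $\trr_r$-suitability hypothesis on $(Q,R)$; in particular this also re-derives that $G$ is automatically invertible under the standing assumptions of the lemma.

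Finally I would substitute this back into the Woodbury formula and conjugate by $\pmat{\Upsilon & \nul \\ \nul & \widetilde\Phi}$ on the right and its adjoint on the left. The block-diagonal term contributes $\pmat{\alpha & \nul \\ \nul & \widetilde\delta}$, and in the correction term every occurrence of $\Upsilon^*\Gamma_1\Phi$, $\Phi^*\Gamma_1\Upsilon$, $\widetilde\Phi^*\Gamma_2\widetilde\Upsilon$, $\widetilde\Upsilon^*\Gamma_2\widetilde\Phi$ collapses to $\beta,\gamma,\widetilde\gamma,\widetilde\beta$ respectively. The resulting $(q,s)$-split matrix then has blocks
\[
\alpha+\beta\,\widetilde\alpha(\II_r-\delta\widetilde\alpha)^{-1}\gamma,\qquad \beta(\II_r-\widetilde\alpha\delta)^{-1}\widetilde\beta,\qquad \widetilde\gamma(\II_r-\delta\widetilde\alpha)^{-1}\gamma,\qquad \widetilde\delta+\widetilde\gamma\,\delta(\II_r-\widetilde\alpha\delta)^{-1}\widetilde\beta\,,
\]
and comparing with \eqref{eq-sg-1}--\eqref{eq-sg-2} these are exactly $\hat\alpha,\hat\beta,\hat\gamma,\hat\delta$ once one invokes the elementary push-through identity $X(\II-YX)^{-1}=(\II-XY)^{-1}X$ to rewrite $\widetilde\alpha(\II_r-\delta\widetilde\alpha)^{-1}=(\II_r-\widetilde\alpha\delta)^{-1}\widetilde\alpha$ in the first block and $\delta(\II_r-\widetilde\alpha\delta)^{-1}=(\II_r-\delta\widetilde\alpha)^{-1}\delta$ in the last (the same identity supplies the equivalence ``$\II_r-\widetilde\alpha\delta$ invertible $\iff$ $\II_r-\delta\widetilde\alpha$ invertible'' used above). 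The only genuine work is bookkeeping: tracking which inverses appear and why they exist --- $\Gamma_1^{-1},\Gamma_2^{-1}$ (hypothesis) and $(\II_r-\widetilde\alpha\delta)^{-1}$ (the $\trr_r$-suitability), and no others --- and checking that Woodbury is applied only when $G$ is invertible, which, as noted, is automatic here. A computation-heavy alternative that avoids Woodbury is to invert $G$ directly by Schur complementation with respect to the $(1,1)$-block $\Gamma_1^{-1}$, whose Schur complement is $\Gamma_2^{-1}-\widetilde\Upsilon\,\delta\,\widetilde\Upsilon^*$; the same push-through manipulations turn the four resulting blocks, after the conjugation, into $\hat\alpha,\hat\beta,\hat\gamma,\hat\delta$, but this route is longer.
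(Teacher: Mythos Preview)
Your proof is correct and takes a genuinely different route from the paper. The paper inverts the block matrix $G$ by Schur complementation with respect to the diagonal blocks $\Gamma_1^{-1}$ and $\Gamma_2^{-1}$, introduces the Schur complements $A=\Gamma_1^{-1}-\Phi\,\tilde\alpha\,\Phi^*$ and $D=\Gamma_2^{-1}-\widetilde\Upsilon\,\delta\,\widetilde\Upsilon^*$, and then avoids inverting $A$ explicitly by using the resolvent identity $A^{-1}=\Gamma_1+A^{-1}\Phi\,\tilde\alpha\,\Phi^*\Gamma_1$ to solve for $\zeta:=\Upsilon^*A^{-1}\Phi=\beta(\II-\tilde\alpha\delta)^{-1}$; the blocks $\hat\alpha,\hat\beta$ are then read off and the remaining two are left to ``analogously''. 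Your approach instead recognizes $G$ as a rank-$2r$ perturbation of $\mathcal G_0$ and applies Woodbury, which trades the large Schur complement $A$ for the small $2r\times 2r$ capacitance matrix $\smat{-\delta & \II_r \\ \II_r & -\tilde\alpha}$. This buys you all four blocks in one stroke and makes the role of the $\trr_r$-suitability hypothesis (invertibility of $\II_r-\tilde\alpha\delta$) completely transparent, since it is literally the invertibility of that capacitance matrix; it also shows cleanly that invertibility of $G$ is \emph{automatic} under the hypotheses of the lemma, whereas the paper states it as a separate assumption (``if the occurring inverse exists''). The paper's route is shorter to write down because it computes only half the blocks explicitly, but yours is more symmetric and self-contained.
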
 
\begin{proof}
Let
$$
\Gamma\,:=\,\pmat{\Gamma_1^{-1} & -\Phi \widetilde \Upsilon^* \\ -\widetilde \Upsilon \Phi^* & \Gamma_2^{-1}}^{-1}\,=\,
\pmat{A^{-1} & A^{-1}\Phi\widetilde\Upsilon^* \Gamma_2 \\ D^{-1}\widetilde \Upsilon\Phi^* \Gamma_1 & D^{-1}} 
$$
where $A,D$ denote the Schur complements
$$
A=\Gamma_1^{-1}\,-\,\Phi \widetilde \Upsilon^* \Gamma_2 \widetilde \Upsilon \Phi^* \,=\, \Gamma_1^{-1}\,-\,\Phi \,\tilde \alpha\, \Phi^*,\quad D=\Gamma_2^{-1}\,-\,\widetilde \Upsilon\, \delta\, \widetilde \Upsilon^*\;.
$$
Furthermore, let
$$
\pmat{ \Upsilon^* & \nul \\ \nul & \widetilde \Phi^*} \,\Gamma\, \pmat{ \Upsilon & \nul \\ \nul & \widetilde \Phi }\,=:\, \pmat{\hat \alpha & \hat \beta \\ \hat \gamma & \hat \delta}
$$
Using the resolvent identity $A^{-1}=\Gamma_1+A^{-1}\Phi\,\tilde \alpha\,\Phi^* \Gamma_1$ we obtain 
for $\zeta:=\Upsilon^*A^{-1}\Phi$,
$$
\zeta\,=\,\Upsilon^* A^{-1} \Phi\,=\,\beta\,+\,\zeta\,\tilde \alpha\,\delta \;\;\Rightarrow\;\;\zeta\,=\,\beta\,(\II-\tilde \alpha \delta)^{-1}
$$
where the inverses exist because $Q$ and $R$ are $r$-suitable. 
Then, 
$$
\hat \alpha\,=\,\pmat{\Upsilon \\ \nul}^*\Gamma \pmat{\Upsilon \\ \nul }\,=\,\Upsilon^* A^{-1} \Upsilon\,=\, \alpha\,+\,\zeta\,\tilde\alpha\,\gamma\,=\,\alpha\,+\,\beta\,(\II-\tilde\alpha \delta)^{-1}\,\tilde\alpha\,\gamma\;
$$
and furthermore
$$
\hat \beta\,=\,\Upsilon^* A^{-1} \Phi \widetilde \Upsilon^*\,\Gamma_2\,\widetilde \Phi\,=\,
\zeta\,\tilde\beta\,=\,\beta\,(\II-\tilde \alpha \delta)^{-1}\,\tilde \beta\;.
$$
These equations correspond exactly to \eqref{eq-sg-1}. Analogously, one obtains \eqref{eq-sg-2} as well.
\end{proof}

\begin{proof}[Proof of Proposition~\ref{prop-sg}]
Part a). We can use Lemma~\ref{lem-id-trr} twice in different ways to get  
$$
(Q\trr R) \trr S \,=\,
\pmat{\II_q & \nul & \nul \\ \nul & \nul & \II_t}
\pmat{Q^{-1} & \smat{\nul & \nul \\ -\II_r & \nul} \\ \smat{\nul & -\II_r \\ \nul & \nul} & R^{-1} & 
\smat{\nul & \nul \\ -\II_s & \nul} \\ & \smat{\nul & -\II_s \\ \nul & \nul} & S^{-1} }^{-1} \pmat{\II_q & \nul  \\ \nul & \nul \\ \nul & \II_t}\,=\, Q \trr (R \trr S)
$$
For the left equation we use
$\Gamma_1^{(l)} =\pmat{Q^{-1} & \smat{\nul & \nul \\ -\II_r & \nul} \\ \smat{\nul & -\II_r \\ \nul & \nul} & R^{-1} }^{-1}$ and $\Gamma_2^{(l)} = S$. For the right equation we use $\Gamma_1^{(r)}=Q$ and
$\Gamma_2^{(r)}=\pmat{R^{-1} & \smat{\nul & \nul \\ -\II_s & \nul} \\ \smat{\nul & -\II_s \\ \nul & \nul} & S^{-1} }^{-1}$ .
Formally, this calculation only proves the identity if all the appearing inverses and particularly $\Gamma_1^{(l)}$ and $\Gamma_2^{(r)}$  exist. By continuity we get the statement in general.

For part b) we use the same notations as above in Definition~\ref{def-1}. Note first that $\Im(\delta)>0$ and $\Im(\tilde \alpha)>0$ implies $\Im(\tilde \alpha^{-1}-\delta)<0$ and therefore $(\II-\tilde \alpha \delta)^{-1}=(\tilde \alpha^{-1}-\delta)^{-1} \tilde \alpha^{-1}$ exists.
This implies that $Q$ and $R$ are $r$-suitable.

Now, for the case $\Im(Q)>0,\,\Im(R)>0$ we can use Lemma~\ref{lem-id-trr} with
$\Gamma_1=Q$, $\Gamma_2=R$ to see immediately that $\Im(Q \trr R)>0$. 
By continuity we get $\Im(Q\trr R)\geq 0$ for $Q\in \Mm(q,r,+)$ and $R\in \Mm(r,s,+)$.
Consider the matrix 
$$
P:=\pmat{ \alpha & \beta \\ \gamma & \delta - \tilde \alpha^{-1}}\,=\,Q+\pmat{\nul & \nul \\ \nul & -\tilde \alpha^{-1}}\;.
$$
We claim $\Im(P)>0$. Indeed, $\Im(P)\geq 0$ is clear by $\Im(\tilde \alpha)>0$ and $\Im(Q)\geq 0$.
Assume
$$
v=\pmat{v_1 \\ v_2} \in \CC^{q+r}\,:\,
0\,=\,v^* \Im(P) v\,=\, v^* \Im(Q) v\,+\,
v_2^* \Im(-\tilde \alpha^{-1})\,v_2
$$
Because $\Im(Q)\geq 0$ and $\Im(-\tilde\alpha^{-1})>0$ this implies $v_2=0$ and thus $v_1^* \Im(\alpha) v_1\,=\,0$ which immediately gives $v_1=0$ as  $\Im(\alpha)>0$. Thus $v=0$ and $\Im(P)>0$.
Using the Schur complement formula and \eqref{eq-sg-1} we see that
$$
\hat \alpha\,=\,\left[\, \pmat{\II_q & \nul}\,P^{-1}\, \pmat{\II_q \\ \nul} \,\right]^{-1} \qtx{implying}
\Im(\hat \alpha)>0\;.
$$
Similarly one proves $\Im(\hat \delta)>0$.
\end{proof}
 
So finally we use Lemma~\ref{lem-id-trr} to prove the important Proposition~\ref{prop-bd}.
 
\begin{proof}[Proof of Proposition~\ref{prop-bd}]
The first statement is clear by definition as $\Upsilon_m$ and $\Phi_n$ are matrices of full rank equal to the number of columns. Only note that for $m=n$ the matrix $(\Upsilon_n \Phi_n)$ may not be of full rank and therefore the imaginary part of $\smat{\alpha^z_n & \beta^z_n \\ \gamma^z_n & \delta^z_n}$ may not be strictly positive.

Now, let $\Upsilon, \Phi$ denote the maps $\Upsilon_l$ and $\Phi_m$ with the natural extension of the  image set to $\ell^2(\GG_{l,m})$ which means
$\Upsilon \varphi = P_{l,m}^* P_l \Upsilon_l \varphi$ and $\Phi \psi = P_{l,m}^* P_m \Phi_m \psi$.
Similarly, we denote by $\hat \Upsilon, \hat \Phi$ the natural extensions of $\widetilde\Upsilon_{m+1}$ and $\widetilde\Phi_n$ with the image set extended to
$\ell^2(\GG_{m+1,n})$. Furthermore, let $\Gamma_1:=(H_{l,m}-z)^{-1}$,  $\Gamma_2:=(H_{m+1,n}-z)^{-1}$.
Then, we have exactly the situation as in Lemma~\ref{lem-id-trr} with $\Gamma=(H_{l,n}-z)^{-1}$ where $\Gamma$ is defined as in the proof of Lemma~\ref{lem-id-trr}. Thus, the lemma gives the result directly.
\end{proof}

\section{Transformation to matrix multiplication}

A simple, natural partial semi-group structure is given by matrix multiplication (of rectangular matrices). Thus,  it is a valid question whether one can transform the $\trr$ operation into a matrix multiplication. 
In fact, considering one-channel operators one has a particular transfer matrix structure coming from the resolvent boundary data.
The form of these transfer matrices leads to the definitions \eqref{eq-tr-1}, \eqref{eq-tr-2}, the only subtlety is that $\beta^z_n$ now is not necessarily a square matrix and the inverse of $\beta^z_n$ appearing in \cite{Sa-OC} makes no immediate sense. 
Still, working with right-inverses, Proposition~\ref{prop-1} can be seen as transforming the relation of Proposition~\ref{prop-bd} into multiplication of sets of matrices. 
The main focus of this section is the proof of Proposition~\ref{prop-1} and we will also obtain Proposition~\ref{prop-2}.

In analogy to the defined transfer matrices we may generally make the following definition.
\begin{defini}
\begin{enumerate}[{\rm (i)}]
\item For $q\leq r$ we denote by $\Mm_\trr(q,r)$ the set of $(q+r) \times (q+r)$ matrices with assigned $(q,r)$ splitting where the upper right $q \times r$ matrix part $\beta$ has full rank $q$ (that is to say that $\beta$ as a linear map is surjective), i.e.
$$
\Mm_\trr(q,r)\,:=\, \left\{ Q=\pmat{\alpha & \beta \\ \gamma & \delta}\,\in\,\Mm(q,r) \,:\, \beta \in \CC^{q\times r}=\Ll(\CC^r,\CC^q)\;\text{is surjective} \;\right\}\,.
$$
\item Let $Q = \pmat{\alpha & \beta \\ \gamma & \delta} \in \Mm_\trr(q,r)$  and
let $\BB=\{B\in \CC^{r \times q}\,:\, \beta\,B\,=\,\II_q\}$ denote the set of right-inverses to $\beta$. 
Furthermore let $\BBb\:=\{B\in \CC^{r \times q}\,:\, \beta\,B\,=\,\alpha\}$.
Then, we define the associated affine spaces of transfer-matrices 
$$
\TT_Q\,:=\, \left\{ \,\pmat{B & -\bbb \\ \delta B & \gamma-\delta \bbb }\,\in\,\CC^{2r \times 2q} \,:\, B\,\in\,\BB\,,
\;\bbb\,\in\,\BBb\,\right\}
$$
and
$$
\T_Q\,:=\, \left\{\,\pmat{B & -B\alpha \\ \delta B & \gamma-\delta B \alpha }\,\in\,\CC^{2r \times 2q} \,:\, B\,\in\,\BB\right\}\,\subset\,\TT_Q
$$
as well as the left and right-parts 
$$
\D_Q\,:=\,\left\{\pmat{B\\ \delta B}\,:\,B \in \BB \right\}
 \qtx{and} \N_Q\,:=\,\left\{\pmat{-\bbb \\ \gamma-\delta \bbb }\,:\,\bbb \in \BBb \right\}
$$
Note that $\D_Q=\TT_Q\smat{\II_q \\ \nul}=\T_Q\smat{\II_q \\ \nul}$ and
$\N_Q=\TT_Q \smat{\nul \\ \II_q}$. 
The used notation is based on '{\bf D}irichlet' and 'von {\bf N}eumann' boundary conditions.
\end{enumerate}
\end{defini}

\begin{rem}
If $\alpha$ is invertible, then $\BBb=\BB\,\alpha$ and one could replace $\bbb=B_2\alpha$ for some $B_2\in\BB$. But if $\alpha$ is 
not invertible, then one may only have $\BB\alpha\subset \BBb$.
\end{rem}

\begin{prop}\label{prop-lin}
Let $Q\in \Mm_\trr(q,r)$, $R\in \Mm_\trr(r,s)$, so particularly $q \leq r \leq s$.
Assume $Q$ and $R$ are $\trr_r$-suitable, then  $Q \trr R \in \Mm_\trr(q,s)$ and we have:
$$
\D_{Q\trr R}\,=\,\T_R\,\D_{Q}\,=\,\TT_R\,\D_Q\qtx{and}
\T_R\,\N_Q\,\subset\, \TT_R\,\N_Q\,=\, \N_{Q \trr R}\;.
$$
In particular we find
$\TT_R \TT_Q \,\subset\,\TT_{Q\trr R}\,.$
\end{prop}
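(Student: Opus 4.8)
The plan is to establish the four assertions in order: first $Q\trr R\in\Mm_\trr(q,s)$; then the core inclusion $\TT_R\TT_Q\subset\TT_{Q\trr R}$; and finally the $\D$- and $\N$-identities, obtained by restricting the core inclusion to the two block-columns of the identity and supplementing with a right-inverse construction for the reverse containments. For the first point, writing $Q\trr R=\smat{\hat\alpha & \hat\beta \\ \hat\gamma & \hat\delta}$, formula \eqref{eq-sg-1} gives $\hat\beta=\beta(\II-\tilde\alpha\delta)^{-1}\tilde\beta$, which as a linear map is the composite $\CC^s\xrightarrow{\tilde\beta}\CC^r\xrightarrow{(\II-\tilde\alpha\delta)^{-1}}\CC^r\xrightarrow{\beta}\CC^q$ of two surjections and (by $\trr_r$-suitability) an isomorphism, hence surjective; so $\TT_{Q\trr R},\D_{Q\trr R},\N_{Q\trr R}$ are well defined.

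For the core inclusion fix $T_Q=\smat{B & -\bbb \\ \delta B & \gamma-\delta\bbb}\in\TT_Q$ and $T_R=\smat{\tilde B & -\tilde\bbb \\ \tilde\delta\tilde B & \tilde\gamma-\tilde\delta\tilde\bbb}\in\TT_R$, so $\beta B=\II_q$, $\beta\bbb=\alpha$, $\tilde\beta\tilde B=\II_r$, $\tilde\beta\tilde\bbb=\tilde\alpha$. Multiplying out the two $2\times2$ block matrices one finds $T_RT_Q=\smat{\hat B & -\hat\bbb \\ \ast & \ast}$ with $\hat B:=(\tilde B-\tilde\bbb\delta)B$ and $\hat\bbb:=(\tilde B-\tilde\bbb\delta)\bbb+\tilde\bbb\gamma$. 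The two algebraic identities that drive everything are $\tilde\beta(\tilde B-\tilde\bbb\delta)=\II_r-\tilde\alpha\delta$ and $\delta(\II-\tilde\alpha\delta)=(\II-\delta\tilde\alpha)\delta$: the first gives $\hat\beta\hat B=\beta(\II-\tilde\alpha\delta)^{-1}(\II-\tilde\alpha\delta)B=\II_q$ and likewise $\hat\beta\hat\bbb=\alpha+\beta(\II-\tilde\alpha\delta)^{-1}\tilde\alpha\gamma=\hat\alpha$, so $\hat B$ is a right-inverse of $\hat\beta$ and $\hat\bbb$ solves $\hat\beta\hat\bbb=\hat\alpha$; substituting $\tilde\beta\hat B=(\II-\tilde\alpha\delta)B$ and $\tilde\beta\hat\bbb=(\II-\tilde\alpha\delta)\bbb+\tilde\alpha\gamma$ into the lower two blocks of $T_RT_Q$ and using the commutation identity together with \eqref{eq-sg-2} shows those blocks equal $\hat\delta\hat B$ and $\hat\gamma-\hat\delta\hat\bbb$. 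Hence $T_RT_Q\in\TT_{Q\trr R}$, i.e. $\TT_R\TT_Q\subset\TT_{Q\trr R}$. (One can streamline the block bookkeeping by noting that each $T\in\TT_Q$ acts by $\smat{u\\v}\mapsto\smat{x\\\delta x+\gamma v}$ for an affine-linear lift $x$ with $\beta x=u-\alpha v$, and composing two such maps exhibits the lift for $Q\trr R$ directly.)

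For the $\D$-identity, since $\D_Q=\TT_Q\smat{\II_q\\\nul}=\T_Q\smat{\II_q\\\nul}$ and the same for $Q\trr R$, applying the core inclusion to the first block-column gives $\T_R\D_Q\subset\TT_R\D_Q=\TT_R\TT_Q\smat{\II_q\\\nul}\subset\TT_{Q\trr R}\smat{\II_q\\\nul}=\D_{Q\trr R}$, so it remains to prove $\D_{Q\trr R}\subset\T_R\D_Q$. Given $\smat{\hat B\\\hat\delta\hat B}\in\D_{Q\trr R}$ with $\hat\beta\hat B=\II_q$, set $B:=(\II-\tilde\alpha\delta)^{-1}\tilde\beta\hat B$; then $\beta B=\hat\beta\hat B=\II_q$, so $B\in\BB$. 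The key observation is that $\hat\beta\hat B=\II_q$ forces $\ran\hat B\cap\ker\tilde\beta=\{\nul\}$ (if $\hat Bx\in\ker\tilde\beta$ then $x=\hat\beta\hat Bx=\beta(\II-\tilde\alpha\delta)^{-1}\tilde\beta\hat Bx=\nul$), so $\tilde\beta$ is injective on $\ran\hat B$ and one may choose a right-inverse $\tilde B$ of $\tilde\beta$ whose range contains $\ran\hat B$, whence $\tilde B\tilde\beta\hat B=\hat B$ and therefore $\tilde B(\II-\tilde\alpha\delta)B=\hat B$. With $\tilde\bbb:=\tilde B\tilde\alpha$ the matrix $T:=\smat{\tilde B & -\tilde B\tilde\alpha \\ \tilde\delta\tilde B & \tilde\gamma-\tilde\delta\tilde B\tilde\alpha}$ lies in $\T_R$, and using once more $\delta(\II-\tilde\alpha\delta)=(\II-\delta\tilde\alpha)\delta$ one checks $T\smat{B\\\delta B}=\smat{\hat B\\\hat\delta\hat B}$, closing the three-fold chain.

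For the $\N$-identity, $\T_R\N_Q\subset\TT_R\N_Q$ is immediate and $\TT_R\N_Q=\TT_R\TT_Q\smat{\nul\\\II_q}\subset\TT_{Q\trr R}\smat{\nul\\\II_q}=\N_{Q\trr R}$ from the core inclusion; the reverse $\N_{Q\trr R}\subset\TT_R\N_Q$ is the step I expect to be the main obstacle. Given $\smat{-\hat\bbb\\\hat\gamma-\hat\delta\hat\bbb}\in\N_{Q\trr R}$ with $\hat\beta\hat\bbb=\hat\alpha$, one is forced to take $\bbb:=(\II-\tilde\alpha\delta)^{-1}(\tilde\beta\hat\bbb-\tilde\alpha\gamma)$, which does lie in $\BBb$ since $\beta\bbb=\hat\beta\hat\bbb-\beta(\II-\tilde\alpha\delta)^{-1}\tilde\alpha\gamma=\hat\alpha-(\hat\alpha-\alpha)=\alpha$, and then to find $\tilde B\in\tilde\BB$, $\tilde\bbb\in\tilde\BBb$ with $-\tilde B\bbb-\tilde\bbb(\gamma-\delta\bbb)=-\hat\bbb$; writing $\tilde B=\tilde B_0+K_1$, $\tilde\bbb=\tilde\bbb_0+K_2$ with the columns of $K_1,K_2$ in $\ker\tilde\beta$ reduces this to a linear equation $K_1\bbb+K_2(\gamma-\delta\bbb)=E$ in which $E$ is forced (by the choice of $\bbb$) to have columns in $\ker\tilde\beta$ as well. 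Its solvability — which genuinely uses the two independent families $K_1,K_2$, so that the smaller set $\T_R$ would not suffice here — is the delicate surjectivity/rank bookkeeping where the hypothesis $q\le r\le s$ has to be spent carefully, and is the point I would expect to demand the most attention.
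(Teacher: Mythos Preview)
Your proof of $Q\trr R\in\Mm_\trr(q,s)$ and of the core inclusion $\TT_R\TT_Q\subset\TT_{Q\trr R}$ is correct and matches the paper's computations (the paper organizes them column by column, first the $\D$-part and then the $\N$-part, but the underlying algebra is the same). For the reverse inclusion $\D_{Q\trr R}\subset\T_R\D_Q$ you take a genuinely different route: rather than invoking the paper's Proposition~\ref{prop-A1} (which says that the right inverses of $\hat\beta=\beta(\II-\tilde\alpha\delta)^{-1}\tilde\beta$ are exactly the products $\tilde B(\II-\tilde\alpha\delta)B$ as $\tilde B,B$ range over right inverses of $\tilde\beta,\beta$), you recover $B$ from $\hat B$ directly and then pick a right inverse $\tilde B$ with $\ran\tilde B\supset\ran\hat B$. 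That argument is clean and self-contained, and avoids the appendix lemma entirely.

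The $\N$-reverse inclusion is where your proof is genuinely incomplete, and the parametrization you chose is the obstruction. You are right that applying $\tilde\beta$ forces $\bbb$ once $\hat\bbb$ is given, but the residual system $K_1\bbb+K_2(\gamma-\delta\bbb)=E$ (with the columns of $K_1,K_2$ constrained to $\ker\tilde\beta$) is solvable only when $\ker\smat{\bbb\\\gamma-\delta\bbb}\subset\ker E$, and this can fail when the forced $\bbb$ has deficient column rank (for instance whenever $\alpha$ is singular). The paper does not set up this column-kernel system at all: instead it fixes some $\tilde\bbb\in\tilde\BBb$, notes that $\tilde\BB-\tilde\bbb\delta=\tilde\BB(\II-\tilde\alpha\delta)$ as sets, and then argues via Proposition~\ref{prop-A1} (together with a limiting argument $\alpha\rightsquigarrow\alpha+\varepsilon\II$ to handle non-invertible $\alpha$) that $(\tilde\BB-\tilde\bbb\delta)\BBb=\{B:\hat\beta B=\alpha\}$. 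Since $\hat\beta(\hat\bbb-\tilde\bbb\gamma)=\alpha$, this immediately produces $\tilde B\in\tilde\BB$ and $\bbb\in\BBb$ with $(\tilde B-\tilde\bbb\delta)\bbb+\tilde\bbb\gamma=\hat\bbb$. So the paper's surjectivity ultimately comes from the right-inverse lemma applied to the factorization of $\hat\beta$, not from solving a linear system in $\ker\tilde\beta$; if you want to complete your argument you should either switch to that device or supply the missing rank analysis for $\smat{\bbb\\\gamma-\delta\bbb}$.
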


\begin{proof}
First, from \eqref{eq-sg-1} one immediately sees that $\hat \beta$ is surjective and thus has maximal rank, so
$Q\trr R \in \Mm_\trr (q,s)$.
We use the same notations for $Q$ and $R$ as always (see Definition~\ref{def-1}.ii\;) 
and let
$$
\pmat{\hat B & -\hat \bbb \\ X & Y}\,:=\,\pmat{\tilde B & - \tilde \bbb \\ \tilde \delta \tilde B & \tilde\gamma - \tilde \delta \tilde \bbb}
\pmat{B & - \bbb \\ \delta B & \gamma - \delta \bbb}\;,\;\;
Q\trr R = :\pmat{\hat \alpha & \hat \beta \\ \hat \gamma & \hat \delta}
$$
where $\beta B=\II_q\;, \tilde \beta \tilde B = \II_r$, $\beta \bbb=\alpha$, $\tilde \beta \tilde \bbb = \tilde \alpha$.
Then, we need to show
$$
\hat \beta \hat B=\II_q\;,\quad 
X=\hat\delta \hat B\;, \quad
\hat \beta \hat \bbb=\hat \alpha\;,
\quad Y=\hat \gamma-\hat \delta \hat \bbb\;.
$$
First, we have 
$\hat B = (\tilde B - \tilde \bbb \delta) B$ and using \eqref{eq-sg-1} we find $\hat \beta \hat B_1=\II_q$.
Then, we obtain
$$
\hat \delta \hat B\,=\,\tilde \delta \hat B + \tilde \gamma (\II-\delta\tilde \alpha)^{-1} \delta (\II-\tilde \alpha \delta) B\,=\,\tilde \delta \hat B_1 + \tilde \gamma \delta B\,=\,X\;.
$$
So we get indeed that $\smat{\hat B \\ X} \in \D_{Q\trr R}$ and hence $\TT_R \D_Q\subset \D_{Q \trr R}$.
Clearly, $\T_R \D_Q \subset \TT_R \D_Q$ and using the special case $\tilde \bbb=\tilde B \alpha$ we get
$\hat B=\tilde B (\II-\tilde \alpha \delta) B$. By Proposition~\ref{prop-A1} we get all right inverses of $\hat \beta$ by varying $\tilde B$ and $B$ over all right inverses to $\tilde \beta$ and $\beta$. Hence, $\T_R \D_Q=\D_{Q\trr R}$ which finishes the proof of the first statement. $\hfill \blacksquare$.

Considering $\hat\bbb$, we get
\begin{equation} \label{eq-b1}
\hat\bbb= \tilde B \bbb +\tilde \bbb \gamma - \tilde\bbb \delta \bbb\,=\, (\tilde B-\tilde\bbb \delta) \bbb\,+\,\tilde \bbb \gamma
\end{equation} 
which gives
\begin{equation} 
\label{eq-b2}
\hat \beta \hat \bbb\,=\,
\beta\,(\II-\tilde \alpha \delta)^{-1} \tilde \beta\hat \bbb\,=\,
\beta\,(\II-\tilde\alpha \delta)^{-1}\left[ (\II-\tilde \alpha \delta)\bbb\,+\,\tilde \alpha \gamma\right]
\,=\,
\alpha+\beta(\II-\tilde \alpha \delta)^{-1} \tilde \alpha \gamma\,=\,\hat \alpha\;.
\end{equation}
Finally, we need to check that $Y=\hat\gamma-\hat \delta\,\hat\bbb$\;. We have that
\begin{equation}
\label{eq-d1} 
Y\,=\,-\tilde \delta\,\hat\bbb\,+\,\tilde \gamma\,(\gamma-\delta \bbb)\;.
\end{equation}
and
\begin{equation}
\label{eq-d2} \hat \gamma - \hat \delta \,\hat \bbb\,=\,
\hat \gamma\,-\,\tilde \delta \hat \bbb\,-\, \tilde \gamma\, (\II-\delta \tilde \alpha)^{-1} \delta \tilde \beta\,\hat \bbb\;.
\end{equation}
So we see that the left hand sides of \eqref{eq-d1} and \eqref{eq-d2} are equal if and only if
\begin{equation}
\label{eq-d3} \tilde \gamma\,(\gamma-\delta \bbb)\,=\,
\tilde \gamma\,(\II-\delta \tilde \alpha)^{-1}\,\left[\gamma\,-\,\delta \tilde \beta\,\hat \bbb \right]\,
\end{equation} 
where we used $\hat \gamma = \tilde \gamma (\II-\delta \tilde \alpha)^{-1} \gamma$.
Let us transform the right hand side,
\begin{align} 
\gamma-\delta \tilde \beta\hat \bbb\,&=\,
\gamma\,-\,\delta \tilde \beta \,\left( (\tilde B-\tilde \bbb \delta)\bbb +\tilde \bbb \gamma \right)
 \,=\, \label{eq-d4}
 (\II-\delta\tilde \alpha) \gamma\,-\,
(\delta-\delta \tilde \alpha \delta) \bbb \;.
\end{align}
Using \eqref{eq-d4}  the right hand side of \eqref{eq-d3} becomes
$\tilde \gamma (\gamma-\delta\bbb) $ which is equal to the left hand side of \eqref{eq-d3}.
Thus, we proved $\smat{-\hat\bbb\\ Y} \in \N_{Q\trr R}$ and 
$\T_R\N_Q \subset \TT_R \N_Q\subset \N_{Q\trr R}$. $\hfill \blacksquare$\\
For equality in the second inclusion we first claim the following:
For any fixed $\tilde \bbb$ such that $\tilde\beta \tilde \bbb=\tilde \alpha$, for $\tilde \BB=\{\tilde B\,:\,\tilde \beta \tilde B=\II\}\,,\; \BBb=\{\bbb\,:\,\beta \bbb=\alpha\}$
 we get
$$
(\tilde \BB-\tilde \bbb \delta) \BBb\,=\,\{ B\in \CC^{s \times q}\,:\,\hat \beta B=\alpha\}\;.
$$
First, because $\II-\tilde \alpha\delta$ is invertible, we get 
$\tilde \BB-\tilde \bbb\delta = \tilde \BB(\II-\tilde \alpha \delta)$ and therefore,  by Proposition~\ref{prop-A1}
$(\tilde \BB-\tilde \bbb \delta) \BB (\alpha+\varepsilon\II)\,=\, \tilde \BB(\II-\tilde \alpha \delta)\BB (\alpha+\varepsilon\II)=
\hat \BB (\alpha+\varepsilon\II)$
where $\hat \BB=\{\hat B\,:\,\hat \beta \hat B\,=\,\II\}$. 
The limit $\varepsilon\to 0$ gives the claim.\footnote{Note that for small but non-zero $\varepsilon$, $\alpha+\varepsilon\II$ is invertible.
If $\alpha$ is invertible, then $\BB\alpha=\BBb$ and 
$\hat\BB\alpha\,=\,\{ B\in \CC^{s \times q}\,:\,\hat \beta B=\alpha\}$ and we do not need this trick $\varepsilon\to 0$, however, if $\alpha$ is not invertible this limit may give a bigger (higher dimensional) affine space as using $\hat\BB\alpha$.}

Finally, for any element $\hat \bbb_0$ such that $\hat \beta \hat \bbb_0= \hat \alpha$ and some fixed $\tilde \bbb$ as above, we have
$$
\hat \beta (\hat \bbb_0-\tilde \bbb \gamma)\,=\,\hat \alpha-\beta(\II-\tilde \alpha \delta)^{-1}\tilde \alpha \gamma\,=\,\alpha\;.
$$
Thus, we find $\tilde B\in \tilde \BB$ and $\bbb\in\BBb$ such that
$$
\hat \bbb_0-\tilde \bbb \gamma\,=\,(\tilde B-\tilde \bbb \delta)\bbb\quad\Rightarrow\quad
\hat\bbb_0\,=\,(\tilde B-\tilde \bbb \delta)\bbb\,=\,\tilde \bbb \gamma\;.
$$
and therefore, $\pmat{-\hat\bbb_0 \\ \hat\gamma-\hat \delta \hat \bbb_0}\,\in\, \TT_R \N_Q$ and we get
$\N_{Q\trr R}\subset \TT_R \N_Q$\;. But in fact, we get the full set $\N_{Q\trr R}$ just by varying $\tilde B$ and $\bbb$ for any fixed $B$ and $\tilde \bbb$.
\end{proof}

Now we obtain Proposition~\ref{prop-1} basically as a corollary:

\begin{proof}[Proof of Proposition~\ref{prop-1}]
First, if $z \not \in A_{m,n} \cup A_{m+1,l} \cup B_{l,m,n}$ then $R^z_{l,m}\in \Mm_\trr(r_l,r_{m+1})$ and $R^z_{m+1,n}\in \Mm_\trr(r_{m+1},r_{n+1})$ are $\trr_{r_{m+1}}$ suitable and one can define 
$R^z_{l,m}$ by $R^z_{l,m} \trr R^z_{m+1,n}\in \Mm_\trr(r_l,r_{n+1})$, so $R^z_{l,m}$ is well defined at least after analytic continuation. Moreover, $\beta^z_{l,n}$ is of full rank (cf. \eqref{eq-sg-1}, hence, $z\not \in A_{l,n}$.
This particularly gives $A_{l,n}\subset A_{l,n-1} \cup A_{n} \cup B_{l,n-1,n}$. Since $A_n$ is finite and $B_{l,m,n}$ is always finite we get by induction that $A_{l,n}$ is finite for any $n\geq l$ (note $A_{l,l}=A_l$) proving part (i).

Proposition~\ref{prop-bd} and Proposition~\ref{prop-lin} give part (ii) as
$\TT^z_{m,n}=\TT_{R^z_{m,n}}$ and $\T^z_{m,n}=\T_{R^z_{m,n}}$.
Using $l=0$, $r_0=1$ part (iii) follows from (ii) as
$\D^z_n=\TT^z_{0,n}\smat{1\\0}=\T^z_{0,n}\smat{1\\0}$ and
$\N^z_n=\TT^z_{0,n} \smat{0\\1}$. Part (iv) follows from (iii) by induction.
\end{proof}

Proposition~\ref{prop-2} follows directly from the following.

\begin{prop}[Symplectic structure] Let $Q\in\Mm_\trr(q,r)$, $q\leq r$.\\
{\rm (i)} If $Q$ is Hermitian, $Q=Q^*$, then $\TT_Q \subset \HSP$, that means $\TT_Q$ is a set of (rectangular) Hermitian-symplectic matrices.
Moreover, for any $T_1, T_2 \in \TT_Q$ we find
$T_1^* J_r T_2=J_q$.\\
{\rm (ii)} If $Q$ is symmetric (in matrix sense), $Q=Q^\top$, then
$\TT_Q\subset \SP$, that means, $\TT_Q$ is a set of (rectangular) symplectic matrices. Moreover, for any $T_1, T_2 \in \TT_Q$ we find
$T_1^\top J_r T_2=J_q$.
\end{prop}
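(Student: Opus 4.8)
The statement is a direct block-matrix identity, so the plan is to expand $T_1^{*}J_{r}T_{2}$ (and, in part (ii), $T_1^{\top}J_{r}T_{2}$) in the assigned $(r,r)$-block form of the elements of $\TT_Q$ and to show it collapses to $J_q$ using only the defining relations $\beta B_i=\II_q$, $\beta\bbb_i=\alpha$ together with the Hermiticity of $Q$, which in block form reads $\gamma=\beta^{*}$, $\delta^{*}=\delta$, $\alpha^{*}=\alpha$. Since the proposition concerns $\TT_Q$ and not merely $\T_Q$, I would keep $B_1,B_2\in\BB$ and $\bbb_1,\bbb_2\in\BBb$ as four independent data throughout; this costs nothing and in fact yields the slightly stronger bilinear identity $T_1^{*}J_rT_2=J_q$ for all pairs, not just $T^{*}J_rT=J_q$.

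For part (i), write $T_i=\pmat{B_i & -\bbb_i \\ \delta B_i & \gamma-\delta\bbb_i}$ with $B_i,\bbb_i\in\CC^{r\times q}$, so each $T_i$ is a $2r\times 2q$ matrix. Using $Q=Q^{*}$ one gets $T_1^{*}=\pmat{B_1^{*} & B_1^{*}\delta \\ -\bbb_1^{*} & \beta-\bbb_1^{*}\delta}$, and a short multiplication gives $J_rT_2=\pmat{-\delta B_2 & -\gamma+\delta\bbb_2 \\ B_2 & -\bbb_2}$. Multiplying $T_1^{*}(J_rT_2)$ block by block, I expect the four blocks to collapse as follows: the $(1,1)$-block cancels identically, $-B_1^{*}\delta B_2+B_1^{*}\delta B_2=\nul$; the $(1,2)$-block is $-B_1^{*}\gamma=-B_1^{*}\beta^{*}=-(\beta B_1)^{*}=-\II_q$; the $(2,1)$-block is $\beta B_2=\II_q$; and the $(2,2)$-block is $\bbb_1^{*}\gamma-\beta\bbb_2=(\beta\bbb_1)^{*}-\alpha=\alpha^{*}-\alpha=\nul$. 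Hence $T_1^{*}J_rT_2=J_q$, and specializing $T_1=T_2=T$ gives $T^{*}J_rT=J_q$, i.e. $\TT_Q\subset\HSP$. The hypothesis $Q\in\Mm_\trr(q,r)$ (that is, $\beta$ of full rank $q$) is used only to guarantee that $\BB$ and $\BBb$, hence $\TT_Q$, are nonempty.

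Part (ii) is the verbatim transpose of the same computation: replace $*$ by $\top$ everywhere and use $Q=Q^{\top}$, which gives $\gamma=\beta^{\top}$, $\delta^{\top}=\delta$, $\alpha^{\top}=\alpha$; the identical four blocks collapse to $J_q$, proving $T_1^{\top}J_rT_2=J_q$ and $\TT_Q\subset\SP$. No new idea is required.

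I do not anticipate a genuine obstacle here: the only care needed is bookkeeping — keeping the block sizes straight ($J_r$ is $2r\times 2r$, each $T_i$ is $2r\times 2q$, and the product is $2q\times 2q$) and applying $\beta B_i=\II_q$ and $\beta\bbb_i=\alpha$ in their adjointed forms $B_i^{*}\beta^{*}=\II_q$, $\bbb_i^{*}\beta^{*}=\alpha^{*}$ at the right moment. As a sanity check, this recovers the classical fact that block-Jacobi transfer matrices are (Hermitian-)symplectic: in the situation of Remark~\ref{rem-T}(i) one has $\gamma=\beta=\delta=(V_n-z\II)^{-1}$, $\BB=\{V_n-z\II\}$, $\BBb=\{\II\}$, and the identity above reduces to the standard $T^{*}J T=J$ for $T=\pmat{V_n-z\II & -\II \\ \II & \nul}$.
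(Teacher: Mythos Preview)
Your proof is correct and follows essentially the same approach as the paper: both compute $T_1^*J_rT_2$ directly by block multiplication, using $Q=Q^*$ to rewrite $\gamma=\beta^*$, $\delta=\delta^*$, $\alpha=\alpha^*$ and then applying $\beta B_i=\II_q$ and $\beta\bbb_i=\alpha$ (and their adjoints) to collapse the four blocks to $J_q$. Your presentation spells out each block separately and adds the helpful remark on where the full-rank hypothesis enters and the sanity check against the block-Jacobi case, but the argument itself is identical to the paper's.
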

Note that generally $\TT_Q$ are sets of rectangular matrices, so we do not talk about elements of the Hermitian-symplectic or symplectic group, we rather talk about elements in the partial semi-groups as defined in Definition~\ref{def-HSP}.
\begin{proof}
Using our standard notations, $Q=Q^*$ implies $\alpha=\alpha^*$, $\beta=\gamma^*$ and $\delta=\delta^*$. Let
$T_i=\smat{B_i & -\bbb_i \\ \delta B_i & \beta^*-\delta \bbb_i}\in \TT_Q$, we find
$$
T_1^* J_r T_2\,=\, \pmat{B_1^* \delta & -B_1^* \\ \beta-\bbb_1^* \delta & \bbb_1^*}
\pmat{B_2 & -\bbb_2 \\ \delta B_2 & \beta^*-\delta \bbb_2}\,=\,
\pmat{\nul & -B_1^* \beta^* \\ \beta B_2 & -\beta \bbb_2+\bbb_1^* \beta^*}\,=\,J_q
$$
where we used $\beta B_i =\II_q=B_i^* \beta^*$ and $\beta \bbb_i=\alpha=\alpha^*=\bbb_i^*\beta^*$.
Using $T=T_1=T_2$ this shows $T\in \HSP$. 
The second part,starting with $Q=Q^\top$, follows by the same calculations replacing $*$ by the transpose $\top$ everywhere.
\end{proof}

\section{Relation to formal solutions of the eigenvalue equation}

The main point of this section is to prove Proposition~\ref{prop-3}.
The first part, Proposition~\ref{prop-3}~(i) states the following:
If we have for $z, \bar z \not \in \bigcup_{n=0}^\infty A_n $ a formal solution of 
$H\Psi=z\Psi+v P_0 \Upsilon_0$, then, there exist $T^z_n \in \TT^z_n$, $n\in \NN_0$, such that
$$
\pmat{\Upsilon_{1}^* \Psi_{1} \\ \Phi_0^* \Psi_0 }\,=\, T^z_0 \pmat{\Upsilon_0^* \Psi_0 \\ v }\qtx{and}
\pmat{\Upsilon_{n+1}^* \Psi_{n+1} \\ \Phi_n^* \Psi_n}\,=\,T^z_n \,\pmat{\Upsilon_{n}^* \Psi_{n} \\ \Phi_{n-1}^* \Psi_{n-1}}\,.
$$
In the second part it is stated that for any solution of the transfer matrix equation with any choice of transfer matrices there is also a corresponding formal solution $\Psi$ satisfying the equations above.
Let us start with the following lemma.
\begin{lemma} \label{lem-no-ker}
Let $z, \bar z \not \in A_n$, then for any $T^z_n\in \TT^z_n$ we have $\dim(\ker T^z_n))=0$.
\end{lemma}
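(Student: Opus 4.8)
The plan is to show that a transfer matrix $T^z_n=\smat{B & -\bbb \\ \delta^z_n B & \gamma^z_n-\delta^z_n\bbb}\in\TT^z_n$ has trivial kernel by exhibiting a left-inverse, or at least by a direct argument on the block structure. First I would observe that the upper block row is $\pmat{B & -\bbb}$ acting on a vector $\smat{x\\y}\in\CC^{2r_n}$ (here $q=r_n\le r=r_{n+1}$), and the lower block row is $\delta^z_n$ applied to $\pmat{Bx} $ plus $\gamma^z_n$ applied to $y$ minus $\delta^z_n\bbb y$. Suppose $T^z_n\smat{x\\y}=0$. From the top: $Bx=\bbb y$. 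Substituting into the bottom: $0=\delta^z_n Bx+\gamma^z_n y-\delta^z_n\bbb y=\delta^z_n\bbb y+\gamma^z_n y-\delta^z_n\bbb y=\gamma^z_n y$, so $\gamma^z_n y=0$.

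The key step is then to use the symplectic/Hermitian structure, or rather the resolvent representation, to conclude $y=0$ and $x=0$. Recall $\gamma^z_n=(\beta^{\bar z}_n)^*$ and, by assumption (A2) together with the hypothesis $\bar z\notin A_n$, the matrix $\beta^{\bar z}_n$ has full rank $r_n$, hence $\gamma^z_n=(\beta^{\bar z}_n)^*$ is \emph{injective} (its kernel is $(\ran\beta^{\bar z}_n)^\perp=\{0\}$). Therefore $\gamma^z_n y=0$ forces $y=0$. Then $Bx=\bbb y=0$, and since $B$ is a right-inverse of $\beta^z_n$ it has full column rank $r_n$, hence is injective, so $x=0$. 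This gives $\ker T^z_n=\{0\}$, i.e. $\dim\ker T^z_n=0$.

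Alternatively, and perhaps more cleanly, I would appeal to Proposition~\ref{prop-2}: for $z=\lambda$ real one has $T_1^*J_{r_{n+1}}T_2=J_{r_n}$ for $T_1,T_2\in\TT^\lambda_n$, and taking $T_1=T_2=T^\lambda_n$ shows $(T^\lambda_n)^*J_{r_{n+1}}T^\lambda_n=J_{r_n}$, which is invertible, forcing $T^\lambda_n$ injective; for general $z,\bar z\notin A_n$ the analogous bilinear identity pairing $T^z_n$ with an element of $\TT^{\bar z}_n$ (using $R^{\bar z}_n=(R^z_n)^*$ on the appropriate blocks) yields the same conclusion. The main obstacle is bookkeeping: one must be careful that the claimed full-rank facts for $\gamma^z_n$ and $B$ genuinely hold under exactly the stated hypotheses $z,\bar z\notin A_n$ (this is where excluding $\bar z\in A_n$ is used, since $A_n$ controls the rank of $\beta^z_n$ only at $z$, and injectivity of $\gamma^z_n=(\beta^{\bar z}_n)^*$ needs the rank condition at $\bar z$), so I expect the writeup to spend its care on tracking which exclusions give which rank statement rather than on any substantial computation.
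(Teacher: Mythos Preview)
Your primary argument is correct and is essentially identical to the paper's own proof: both compute $T^z_n\smat{x\\y}=0$, deduce $Bx=\bbb y$ from the top block, substitute into the bottom block to get $\gamma^z_n y=0$, then use $\gamma^z_n=(\beta^{\bar z}_n)^*$ injective (since $\bar z\notin A_n$) to force $y=0$, and finally $Bx=0\Rightarrow x=\beta^z_n Bx=0$. Your alternative via the symplectic identity is not pursued in the paper, but the direct block computation is exactly what is written there.
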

\begin{proof} 
For $z\not \in A_n$ we have that $\TT^z_n$ exists. Hence, let $B\in \BB^z_n,\,\bbb \in \BBb^z_n$ and let
$$
\nul\,=\,T^z_n \pmat{v \\ w} \,=\,\pmat{B & -\bbb \\ \delta^z_n B & \gamma^z_n - \delta^z_n \bbb} \pmat{v \\ w}\,=\,\pmat{Bv-\bbb w \\ \delta^z_n Bv +(\gamma^z_n-\delta^z_n B)w}\,.
$$
Then, $Bv=\bbb w$ implies $\delta^z_n Bv=\delta^z_n \bbb w$ so that the second equation gives $\gamma^z_n w=0$. For $\bar z\not\in A_n$ the kernel of $\gamma^z_n=(\beta^{\bar z}_n)^*$ is just the zero vector so that $w=\nul$ 
which implies $v=\beta^z_n Bv = \nul$ and finally $\smat{v\\w}=\nul$. Hence, the kernel of $T^z_n$ consists of only the zero vector.
\end{proof}

\begin{proof}[Proof of Proposition~\ref{prop-3}]
Let $\Psi=(\Psi_n)_{n=0}^\infty$ be a formal solution of $H\Psi=z\Psi+v P_0\Upsilon_0$.
Moreover, let $\bfu_{n}\,:=\,\Upsilon_{n}^* \Psi_{n} \in \CC^{r_n} \,,\,\bfv_n:=\Phi_n^* \Psi_n\in \CC^{r_{n+1}}$, $n\geq 0$, and $\bfv_{-1}\,:=\,v$. 
Under the assumption that $\smat{\bfu_n \\ \bfv_{n-1}} \neq \nul$ (meaning it is not the zero vector) we will prove the existence of $T^z_n \in \TT^z_n$ such that
$\smat{\bfu_{n+1} \\ \bfv_n}=T^z_n \smat{\bfu_n \\ \bfv_{n-1}}$. For $n=0$ this is fulfilled by assumption and by Lemma~\ref{lem-no-ker} we obtain that
$\smat{\bfu_{n+1} \\ \bfv_{n}} \neq \nul$. Hence the existence of $T^z_n$ for any $n\geq 0$ then follows by induction.

So let $\smat{\bfu_n \\ \bfv_{n-1}}\neq \nul$. By the assumptions together with \eqref{eq-H} we have
\begin{equation}\label{eq-formal-sol}
(V_n-z)\,\Psi_n\,=\, \Upsilon_n \,\bfv_{n-1}\,+\,\Phi_n \,\bfu_{n+1}\qtx{for all} n\in \NN_0
\end{equation}
Assume for a moment that $V_n-z\II$ is invertible. 
Then by applying the inverse $(V_n-z)^{-1}$ on both sides and multiplying by $\Upsilon_n^*$ or $\Phi_n^*$ from the left we obtain
\begin{equation} \label{eq-formal-2}
\pmat{\bfu_n \\ \bfv_n}\,=\,R^z_n \,\pmat{\bfv_{n-1} \\ \bfu_{n+1}}\,=\,\pmat{ \alpha^z_n\,\bfv_{n-1}\,+\,\beta^z_n\,\bfu_{n+1} \\ \gamma^z_n\,\bfv_{n-1}\,+\,\delta^z_n \,\bfu_{n+1}}\;.
\end{equation}
As $z\not \in \A_n$ we have that $R^z_n$ is defined at least by analytic continuation, so the kernel of $(V_n-z)$ is orthogonal to the column vectors of $\Upsilon_n$ and $\Phi_n$. Hence, one may project
$\Psi_n$  to the subspace in $\ell^2(S_n)$ orthogonal to the kernel of $V_n-z\II$. Then, applying the inverse to \eqref{eq-formal-sol} in this subspace also gives \eqref{eq-formal-2}.  

Now take some $B_0\in \BB^z_n$ and let $\bfw\,:=\, \bfu_{n+1}\,-\,B_0(\bfu_n-\alpha^z_n \bfv_{n-1}) \in \CC^{r_{n+1}}$.
By the first equation of \eqref{eq-formal-2} we get $\beta^z_n \bfw=\nul$, thus, $\bfw\in \ker(\beta^z_n)$.
Let $\KK^z_n\:=\,\ker(\beta^z_n)^{r_n}$ denote the set of $r_{n+1} \times r_{n}$ matrices where each column vector is in the kernel of $\beta^z_n$.
Now, if $\bfu_n\neq \nul$, then at least one entry is not zero and one can create a matrix $K_1 \in \KK^z_n$ which has all column vectors equal to zero except for one, such that
$K_1\bfu_n=\bfw$, moreover, we let $K_2=\nul$. \\
If $\bfu_n=\nul$ then by assumption $\bfv_{n-1} \neq 0$ and we find $K_2\in \KK^z_n$ such that $K_2 \bfv_{n-1}\,=\,\bfw$ and we let $K_1=\nul$ in this case. Then, in either of the cases, we find for $B=B_0+K_1$ and $\bbb=B_0\alpha^z_n-K_2)$ that
$$
B\,\bfu_n\,-\,\bbb\,\bfv_{n-1}\,=\,\bfu_{n+1}\;.
$$
Plugging this into the second equation of \eqref{eq-formal-2} gives
$$
\bfv_n\,=\,\gamma^z_n \bfv_{n-1} \,+\,\delta^z_n(B\,\bfu_n\,-\,\bbb\,\bfv_{n-1})\,=\, \delta^z_n\, B\,\bfu_n\,+\,(\gamma^z_n-\delta^z_n \bbb)\,\bfv_n\;.
$$
Together, this means
$$
\pmat{ \bfu_{n+1} \\ \bfv_n}\,=\,
T^z_n\,\pmat{\bfu_n \\ \bfv_{n-1}} \qtx{where} T^z_n=\pmat{B & -\bbb \\ \delta^z_n B & \gamma^z_n - \delta^z_n \bbb}\,\in\,\TT^z_n\;. 
$$
$\hfill \blacksquare$

In part (ii) of Proposition~\ref{prop-3} we have the reversed situation: For some $z\not\in \bigcup_{n=0}^\infty A_n$ we are given some transfer matrices 
$T^z_n\in \TT^z_n$ and a starting vector $\smat{u \\ v}=:\smat{\bfu_0 \\ \bfv_{-1}}$ and we want to construct a
corresponding formal solution $\Psi=(\Psi_n)_{n=0}^\infty$ to $H\Psi=z\Psi+vP_0\Upsilon_0$.
Therefore, define
$$
\pmat{\bfu_{n+1} \\ \bfv_n}\,=\,T^z_n T^z_{n-1} \cdots T^z_0 \pmat{u \\ v} \qtx{and let} \pmat{\bfu_0 \\ \bfv_{-1}}\,:=\,\pmat{u \\ v}\;.
$$
It easy to check that $(\bfu_n, \bfv_{n-1})_{n \in \NN_0}$ satisfies the equations \eqref{eq-formal-2}.
If $V_n-z\II$ is invertible, then we define
$$
\Psi_n\,:=\,(V_n-z\II)^{-1}\,\left(\Upsilon_n \bfv_{n-1}+\Phi_n \bfu_{n+1} \right)
$$
for any $n\in \NN_0$.
If $V_n-z$ is not invertible, then 
all the column vectors of $\Upsilon_n$ and $\Phi_n$ are orthogonal to the kernel of $V_n-z\II$ 
because $R^z_n$ exists by analytic extension. Hence, 
we can restrict to the orthogonal complement of the kernel and take the inverse there. This way, $\Psi_n$ is always well defined.
It is clear that $(\Psi_n)_{n=0}^\infty$ satisfies \eqref{eq-formal-sol} and with \eqref{eq-formal-2} we find $\bfu_n=\Upsilon_n^*\Psi_n$ and $\bfv_n=\Phi_n^* \Psi_n$ for $n\geq 0$. Both together give that
$\Psi=\bigoplus_n \Psi_n=(\Psi_n)_n$ is indeed a formal solution of $H\Psi=z\Psi+vP_0\Upsilon_0$.
\end{proof}

\section{Weyl discs and limit point property}

In this part we prove Theorem~\ref{Theo-2} and prepare for the spectral averaging formula leading to Theorem~\ref{Theo-1}. 
Let us first introduce some notations.
The set of unitary $n\times n$ matrices will be denoted by $\UU(n)$, the disc of matrices with singular values smaller equal to one by $\DD(n)$, the set of Hermitian matrices by ${\rm Her}(n)$, the set of matrices with non-negative imaginary part by ${\rm Mat}_+(n)$ and ${\rm Mat}_-(n)=-{\rm Mat}_+(n)$. 
This means
$$
\UU(n)\,:=\,\{U\,\in\,\CC^{n\times n}\,:\,U^*U=\II\}\,,\quad
\DD(n)\,:=\,\{R\,\in\,\CC^{n\times n}\,:\,R^*R\leq\II\}
$$
$$
{\rm Her}(n)\,:=\,\{A\,\in\,\CC^{n\times n}\,:\,A^*=A \}\,,\quad
{\rm Mat}_\pm(n)\,:=\,\{A\,\in\,\CC^{n\times n}\,:\,\pm\Im(A)\geq \nul\}
$$
Recall that $\Im(A)=(A-A^*)/(2\imath)$ is the imaginary part in the sense of $C^*$ algebras, meaning that real and imaginary part are Hermitian matrices.

Let us consider the partial operators $H_{0,n}$ on $\ell^2(\GG_{0,n})$. We will view $\Upsilon_0$ and the $r_{n+1}$ column vectors of $\Phi_n$ as vectors in $\ell^2(\GG_{0,n})$ and simply write 
$\hat \Upsilon_0:=P_{0,n}^* P_0 \Upsilon_0$ and $\hat \Phi_n:=P_{0,n}^*P_n \Phi_n$.
Recall that with this convention we have
$$
R^z_{0,n}=\pmat{\alpha^z_{0,n} & \beta^z_{0,n} \\ \gamma^z_{0,n} & \delta^z_{0,n}}\,=\,
\pmat{\hat \Upsilon_0^* \\ \hat \Phi_n^*}\, (H_{0,n}-z)^{-1}\,\pmat{\hat \Upsilon_0 & \hat \Phi_n}
$$
For $z\in \CC^+$, meaning $\im(z)>0$, and $A\in {Mat}_+(r_n)$ let us define
$$
R^{z,A}_{0,n}=\pmat{\alpha^{z,A}_{0,n} & \beta^{z,A}_{0,n} \\ \gamma^{z,A}_{0,n} & \delta^{z,A}_{0,n}}\,=\,
\pmat{\hat \Upsilon_0^* \\ \hat \Phi_n^*}\, (H_{0,n}-\hat \Phi_n A \hat\Phi_n^*-z)^{-1}\,\pmat{\hat \Upsilon_0 & \hat\Phi_n}\,.
$$
Note that $\alpha^{z,A}_{0,n}$ are $1\times 1$ matrices and hence, just numbers in the upper half plane $\CC^+$ for $\im(z)>0$.
In analogy to Weyl circle and Weyl disc for Jacobi operators we define the following.

\begin{defini}
For $\im(z)>0$ the {\bf $\mathbf{n}$-th Weyl region} $\breve \WW_n^z\subset \CC^+$ and the {\bf $\mathbf{n}$-th Weyl disc} $\WW^z_n$ are defined by
$$
\breve{\WW}^z_n\,:=\,{\rm cl}\,{\{\alpha^{z,A}_{0,n}\,:\, A\in{\rm Her}(r_{n+1})\,\}}\qtx{and}
\WW^z_n\,:=\, {\rm cl}\,{\{\alpha^{z,A}_{0,n}\,:\,A\in{\rm Mat}_+(r_{n+1})\}}\;.
$$
where ${\rm cl}\,\Ss$ denotes the closure of a set $\Ss$ (this amounts to formally allowing $A$ to have infinite values.)
\end{defini}

The standard resolvent identity $(H-\Aa-z)^{-1}=(H-z)^{-1}+(H-z)^{-1} \Aa (H-\Aa-z)^{-1}$ yields
\begin{align}
\gamma^{z,A}_{0,n} \,&=\, \gamma^z_{0,n}\,+\,\delta^z_{0,n} \,A\, \gamma^{z,A}_{0,n} \quad\Rightarrow\quad
\gamma^{z,A}_{0,n}\,=\,(\II-\delta^z_{0,n} A)^{-1} \gamma^z_{0,n} \label{eq-gamma-A} \\
\alpha^{z,A}_{0,n}\,&=\,\alpha^z_{0,n}\,+\,\beta^z_{0,n} \,A\, \gamma^{z,A}_{0,n}\,=\, \alpha^z_{0,n}\,+\,\beta^z_{0,n} \,A(\II-\delta^z_{0,n}A)^{-1}\,\gamma^z_{0,n}\;. \label{eq-alpha-A}
\end{align}
Since $\Im(\delta^z_{0,n})>0$, the inverse $(\II-\delta^z_{0,n} A)^{-1}$ is well defined for $\Im(A)\geq 0$.\footnote{Indeed $\Im(\delta)>0, \Im(A)\geq 0$ and assuming $\delta A \psi=\psi$ we get $(A\psi)^* \delta A \psi=\psi^* A^* \psi$. The imaginary part of the right hand side is lower or equal to zero, and of the left hand side it is larger than zero, unless $A\psi=0$. Hence, $A\psi=0$ in which case $\psi=\delta A\psi=0$.}
Also, we see that the Weyl disc as defined here is the same as in Theorem~\ref{Theo-2}.

\begin{prop}\label{prop-W1}
Let $\im(z)>0$, then, we have the following: \\
{\rm (i)}
$\WW^z_n$ is a compact, closed disc in the upper half plane. If $r_{n+1}=1$ then
$\breve \WW^z_n=\partial \WW^z_n$ is the surrounding circle, if $r_{n+1}\geq 2$, then
$\breve \WW^z_n=\WW^z_n$.\\
{\rm (ii)} Let $z,\bar z \not \in \bigcup_{k=0}^\infty A_k$ and let $\Psi^z \subset \CC^G$ be the set of all formal\footnote{By formal solutions we mean that we consider the space of all $\GG$-sequences $\CC^\GG$ and not only $\ell^2$.} solutions $\psi=\bigoplus_{k=0}^\infty \psi_k$ of $H\psi=z\psi$ with $\Upsilon_0^* \psi_0=1$. 
Then, the radius $r_{z,n}$ of the Weyl disc $\WW^z_n$ satisfies 
$$
\frac{1}{4\,\im(z)^2\,r^2_{z,n}}\,=\,{\left( \min_{\psi^z\in \Psi^z } \sum_{k=0}^n \| \psi^z_k \|^2\right) \left( \min_{\psi^{\bar z}\in \Psi^{\bar z} } \sum_{k=0}^n \| \psi^{\bar z}_k \|^2\right) }\,.
$$
{\rm (iii)} $\WW^z_{n+1} \subset \WW^z_n$.
\end{prop}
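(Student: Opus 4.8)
The plan is to treat the three items in the order (iii), (i), (ii). Item (iii) I would get from a Schur–complement identity relating the Weyl family at level $n+1$ to the one at level $n$; item (i) is the genuinely computational step, identifying the Weyl family as a disc with explicit centre and radius; and (ii) then drops out by comparing that radius with a minimum over formal solutions which can be evaluated by hand. Throughout $\im(z)>0$; $\hat\Upsilon_0:=P_{0,n}^*P_0\Upsilon_0$ and $\hat\Phi_n:=P_{0,n}^*P_n\Phi_n$ are $\Upsilon_0,\Phi_n$ read in $\ell^2(\GG_{0,n})$, as in \eqref{eq-alpha-A}, and for the parts of (ii) that need it one uses Proposition~\ref{prop-1} and $z,\bar z\notin\bigcup_k A_k$ to know that $R^z_{0,k},R^{\bar z}_{0,k}$ exist and $\beta^z_{0,k},\beta^{\bar z}_{0,k}\neq\nul$ (as $r_0=1$).

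\smallskip\noindent\emph{Item (iii).} Writing $\GG_{0,n+1}=\GG_{0,n}\sqcup S_{n+1}$ and using $W_{n+1}=-\Upsilon_{n+1}\Phi_n^*$, the operator $H_{0,n+1}-\hat\Phi_{n+1}A'\hat\Phi_{n+1}^*-z$ (for $\Im A'\geq\nul$) is block–tridiagonal with diagonal blocks $H_{0,n}-z$ and $V_{n+1}-\Phi_{n+1}A'\Phi_{n+1}^*-z$ and off–diagonal blocks $-\hat\Phi_n\Upsilon_{n+1}^*,\ -\Upsilon_{n+1}\hat\Phi_n^*$. Since $\im(z)>0$, $\Im A'\geq\nul$, the lower diagonal block has strictly negative imaginary part, hence is invertible, and because $\hat\Upsilon_0$ has no component on $S_{n+1}$ the block–inversion (Schur complement) formula gives
$$
\alpha^{z,A'}_{0,n+1}=\hat\Upsilon_0^*\bigl(H_{0,n}-\hat\Phi_n A''\hat\Phi_n^*-z\bigr)^{-1}\hat\Upsilon_0=\alpha^{z,A''}_{0,n},
\qquad A'':=\Upsilon_{n+1}^*\bigl(V_{n+1}-\Phi_{n+1}A'\Phi_{n+1}^*-z\bigr)^{-1}\Upsilon_{n+1}.
$$
The inside matrix has $\Im<\nul$, so its inverse has $\Im>\nul$, and since $\Upsilon_{n+1}$ has full column rank $\Im A''>\nul\geq\nul$. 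Hence $\{\alpha^{z,A'}_{0,n+1}:\Im A'\geq\nul\}\subseteq\{\alpha^{z,A}_{0,n}:\Im A\geq\nul\}$, and taking closures gives $\WW^z_{n+1}\subseteq\WW^z_n$.

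\smallskip\noindent\emph{Item (i).} By \eqref{eq-alpha-A} the Weyl region $\breve\WW^z_n$ (resp. the disc $\WW^z_n$) is the image of $\Her(r_{n+1})$ (resp. $\{A:\Im A\geq\nul\}$) under the scalar matrix linear–fractional map $A\mapsto\alpha^z_{0,n}+\beta^z_{0,n}A(\II-\delta^z_{0,n}A)^{-1}\gamma^z_{0,n}$. First I would substitute $C:=\delta^z_{0,n}-A^{-1}$ (turning the map into $w=\alpha^z_{0,n}-\beta^z_{0,n}C^{-1}\gamma^z_{0,n}$) and then $W:=D^{1/2}C^{-1}D^{1/2}$ with $D:=\Im\delta^z_{0,n}>\nul$; a short computation then shows that $A$ Hermitian corresponds to $W$ normal with spectrum on the circle $\mathcal C:=\{\zeta:|\zeta+\tfrac i2|=\tfrac12\}$, that $\Im A\geq\nul$ corresponds to $\|W+\tfrac i2\II\|\leq\tfrac12$, and that $w=\alpha^z_{0,n}-u^*Wv$ with $u:=D^{-1/2}(\beta^z_{0,n})^*$, $v:=D^{-1/2}\gamma^z_{0,n}$. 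The triangle inequality and Cauchy–Schwarz give $|w-(\alpha^z_{0,n}+\tfrac i2 u^*v)|\leq\tfrac12\|u\|\|v\|$, so the family lies in the closed disc of centre $\alpha^z_{0,n}+\tfrac i2\beta^z_{0,n}D^{-1}\gamma^z_{0,n}$ and radius
$$
r_{z,n}=\tfrac12\|u\|\|v\|=\tfrac12\Bigl(\beta^z_{0,n}(\Im\delta^z_{0,n})^{-1}(\beta^z_{0,n})^*\Bigr)^{1/2}\Bigl((\gamma^z_{0,n})^*(\Im\delta^z_{0,n})^{-1}\gamma^z_{0,n}\Bigr)^{1/2}.
$$
What remains — and this is the \textbf{main obstacle} — is the converse: for $r_{n+1}=1$ the Hermitian $A$'s trace out exactly the boundary circle ($W=\zeta\in\mathcal C$, $w=\alpha^z_{0,n}-\zeta u^*v$), whereas for $r_{n+1}\geq2$ they already fill the whole disc, which I would prove by realising every boundary point with a normal $W$ having suitably placed eigenvectors and eigenvalues on $\mathcal C$ and invoking connectedness/compactness of the parameter set. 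The other assertions of (i) (compactness; position in $\CC^+$ via $\Im\alpha^{z,A}_{0,n}>0$ for $\Im A\geq\nul$) are routine.

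\smallskip\noindent\emph{Item (ii).} I would first show that the restrictions to $\GG_{0,n}$ of the elements of $\Psi^z$ are precisely $\{(H_{0,n}-z)^{-1}\hat\Phi_n\xi:\xi\in\BB^z_{0,n}\}$: by \eqref{eq-H}, $\psi\in\Psi^z$ satisfies $(H_{0,n}-z)(\psi|_{\GG_{0,n}})=\hat\Phi_n\xi$ with $\xi:=\Upsilon_{n+1}^*\psi_{n+1}$ (the only term of $(H\psi)_n$ dropped when passing to $H_{0,n}$), the normalisation $\Upsilon_0^*\psi_0=1$ reads $\beta^z_{0,n}\xi=1$, and conversely any such $\xi$ gives a solution on $\GG_{0,n}$ extending to an element of $\Psi^z$; moreover $\sum_{k=0}^n\|\psi_k\|^2=\|(H_{0,n}-z)^{-1}\hat\Phi_n\xi\|^2$. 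Since $H_{0,n}$ is self–adjoint, pairing $\phi_\xi:=(H_{0,n}-z)^{-1}\hat\Phi_n\xi$ with $\hat\Phi_n\xi=(H_{0,n}-z)\phi_\xi$ and taking imaginary parts gives $\xi^*(\Im\delta^z_{0,n})\xi=\im(z)\|\phi_\xi\|^2$, so by Cauchy–Schwarz
$$
\min_{\psi^z\in\Psi^z}\sum_{k=0}^n\|\psi^z_k\|^2=\frac1{\im(z)}\min_{\beta^z_{0,n}\xi=1}\xi^*(\Im\delta^z_{0,n})\xi=\frac1{\im(z)\,\beta^z_{0,n}(\Im\delta^z_{0,n})^{-1}(\beta^z_{0,n})^*}.
$$
Running the same argument at $\bar z$, with $\delta^{\bar z}_{0,n}=(\delta^z_{0,n})^*$ and $\beta^{\bar z}_{0,n}=(\gamma^z_{0,n})^*$, gives the analogous expression with $\beta^z_{0,n}$ replaced by $(\gamma^z_{0,n})^*$. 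Multiplying the two minima and comparing with the radius from (i) yields exactly $\tfrac{1}{4\im(z)^2 r_{z,n}^2}=\bigl(\min_{\psi^z}\sum_0^n\|\psi^z_k\|^2\bigr)\bigl(\min_{\psi^{\bar z}}\sum_0^n\|\psi^{\bar z}_k\|^2\bigr)$.
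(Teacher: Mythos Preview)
Your argument is largely correct and follows the paper's route closely. Your Schur--complement proof of (iii) is a direct version of what the paper does via the $\trr$ machinery of Proposition~\ref{prop-bd}, and your treatment of (ii) --- identifying $\psi|_{\GG_{0,n}}=(H_{0,n}-z)^{-1}\hat\Phi_n\xi$ with $\beta^z_{0,n}\xi=1$, using $\xi^*(\Im\delta^z_{0,n})\xi=\im(z)\|\psi|_{\GG_{0,n}}\|^2$ and minimising by Cauchy--Schwarz --- is exactly the paper's Claim~3. In (i) your parametrisation by $W$ with $\|W+\tfrac{i}{2}\II\|\leq\tfrac12$ is equivalent to the paper's parametrisation by $R\in\DD(r_{n+1})$ (they are related by $R=-2W-i\II$), and your centre and radius formulae agree with \eqref{eq-r_n}.

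The one genuine gap is the surjectivity step for $r_{n+1}\geq 2$ that you flag as the main obstacle. Your proposed argument --- realise every boundary point by a normal $W$ with spectrum on $\mathcal C$, then invoke connectedness/compactness of the parameter set --- does not work as stated: a connected compact subset of the closed disc containing the boundary circle may well \emph{be} just the circle. What actually closes the argument is the observation the paper makes: since normal matrices with spectrum on $\mathcal C$ are precisely $\tfrac12(R-i\II)$ with $R$ unitary, the question reduces to computing $\{u^*Rv:R\in\UU(r_{n+1})\}$; choosing unitaries $U,V$ with $U^*u=\|u\|e_1$, $V^*v=\|v\|e_1$ and replacing $R$ by $URV$ reduces further to $\|u\|\,\|v\|\,\{e_1^*Re_1:R\in\UU(r_{n+1})\}$, and for $r_{n+1}\geq 2$ the $(1,1)$ entry of a unitary can be any $\zeta$ with $|\zeta|\leq 1$ (embed a $2\times 2$ unitary with top-left entry $\zeta$). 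This simultaneously shows $\breve\WW^z_n=\WW^z_n=$ full disc. For $r_{n+1}=1$ you have also only shown $\breve\WW^z_n$ is the circle; that $\WW^z_n$ is the full disc follows from your bijection $\Im A\geq 0\Leftrightarrow |W+\tfrac{i}{2}|\leq\tfrac12$, which you state but should make explicit is two-sided.
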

\begin{proof}
Let $\breve \Ab^z_n:={\rm cl}\,{\{ (B-\delta^z_{0,n})^{-1}\,:\, B=B^*\}}$, 
$\Ab^z_n:={\rm cl}\,{\{ (B-\delta^z_{0,n})^{-1}\,:\, \Im(B)\leq 0\}}$,
and, moreover $\hat\Ab^z_n:=\{(B-\delta^z_{0,n})^{-1}\,:\,\Im(B)<0\}$. Note, $\Im(A)>0$ gives $\Im(A^{-1})<0$ and
$A(\II-\delta^z_{0,n}A)^{-1}=(A^{-1}-\delta^z_{0,n})^{-1}$ when $A$ is invertible. Using continuity and density of invertible matrices it is sufficient to consider invertible matrices $A$ in the definition of $\WW^z_n$ and $\widehat \WW^z_n$ and by \eqref{eq-alpha-A} one finds
$$
\breve \WW^z_n\,=\,\alpha^z_{0,n}\,+\,\beta^z_{0,n} \,\breve \Ab^z_n\, \gamma^z_{0,n}\;,\quad
\WW^z_n\,=\,\alpha^z_{0,n}\,+\,\beta^z_{0,n} \,\Ab^z_n\, \gamma^z_{0,n}\;,\quad
$$ 
Thus, $\breve \WW^z_n$ and $\WW^z_n$ are affine projections of $\breve \Ab^z_n$ and $\Ab^z_n$ under the same affine map. 
Note that
$$
A(\II-\delta^z_{0,n} A)^{-1}\,=\,\pmat{\II & \nul \\ -\delta^z_{0,n} & \II}\,\cdot\, A
$$
can be regarded as a generalized Möbius transform acting on ${\rm Mat}_+(r_n)$. Similarly as the Möbius transforms in the upper half plane it maps
generalized 'circles' and 'discs' into such 'circles' and 'discs' where generalized circles are Möbius transforms of the unitary group $\UU(r_{n+1})$ and 
generalized 'discs' are such images of $\DD(r_{n+1})$. Here, we will give some direct calculations. Let $\Im(\delta^z_{0,n})=I>\nul$. First, we show the following. 
$$\text{Claim 1:} \quad \hat \Ab^z_n=\left\{\tfrac12 \imath I^{-1}+\tfrac12 \sqrt{I^{-1}} R \sqrt{I^{-1}}\,:\,R^*R\,<\,1 \,\right\}$$
Clearly, by redefining $B$, the set $\hat A^z_n$ is given by $\{(\sqrt{I}B\sqrt{I}-\imath I)^{-1}\,:\,\Im(B)<0\}$.
Now, equating this expression in $B$ with the one in $R$ in the claim gives the relation
$$
R\,=\,2(B-\imath \II)^{-1}\,-\,\imath\,\II  
$$
For $\Im(B)<0$ such $R$ is well defined. Then,
$$
R^*R\,=\,4(B^*+\imath\II)^{-1}(B-\imath\II)^{-1} +2\imath (B-\imath \II)^{-1} - 2 \imath (B^*+\imath \II)^{-1}+\II
$$
Therefore, $R^*R<\II$ becomes equivalent to
$$
\nul \,>\,4\cdot \II\,+\,2\imath(B^*+\imath \II)\,-\, 2 \imath (B-\imath \II)\,=\, 2\imath (B^* - B) \,=\, 4 \Im(B)
$$
which is indeed fulfilled.\\
On the other hand, having $R^*R<\II$ we can define
$B\,=\,2(R+\imath \II)^{-1}\,+\,\imath\II$ and the calculation above shows $\Im(B)<0$ for $R^*R<\II$. This gives claim 1.  \hfill $\blacksquare$

The above relations also directly give a one to one correspondence  between $\{B^ *=B\}$ and $\{R^*R=\II\,:\,-\imath \not \in \spec(R)\}$ as well as between $\{B^*B\leq \II\}$
and $\{R^*R\leq \II\,:\,-\imath \not \in \spec(R)\}$. Thus, we obtain directly that
\begin{align*}
\breve \Ab^z_n\,&\,=\left\{\tfrac12 \imath I^{-1}+\tfrac12 \sqrt{I^{-1}} R \sqrt{I^{-1}}\,:\,R\in \UU(r_{n+1}) \,\right\} \qtx{and} \\
\Ab^z_n\,&=\left\{ \tfrac12 \imath I^{-1}+\tfrac12 \sqrt{I^{-1}} R \sqrt{I^{-1}}\,:\,R\in \DD(r_{n+1})\right\}\;.
\end{align*}
$\breve \WW^z_n$ and $\WW^z_n$ are of the form $\{ \alpha+\beta R \gamma \}$ with $\alpha\in \CC$, $\beta$ being a complex row vector,  $\gamma$ a complex column vector and $R$ varying in $\UU(r_{n+1})$ or $\DD(r_{n+1})$, respectively. 
For $r_{n+1}=1$ we see immediately that $\Ab^z_n$ is a disc in $\CC$ and $\breve \Ab^z_n$ the boundary circle and the same is true for $\breve \WW^z_n$ and $\WW^z_n$. For $r_{n+1}\geq 2$ both represent the same disc:
$$
\text{Claim 2: Let $r_{n+1}\geq 2$, then:}\quad  
\breve \WW^z_n= \WW^z_n
$$
Changing $R$ with $URV$ where $U, V$ are adequate unitaries, we can assume that $\beta^*$ and $\gamma$ are multiples of $e_1\in\RR^{r_{n+1}}$, the first canonical basis vector in $\RR^{r_n}$. This means $\breve \WW^z_n$ and $\WW^z_n$ are given by ${\alpha+c e_1^* R e_1}$ where $\alpha, c\in\CC$, $e_1^* R e_1$ is the top left entry of the matrix $R$ and $R$ varies in $\UU(r_{n+1})$ or $\DD(r_{n+1})$, respectively.
It is a well known fact of linear algebra that for $r_{n+1}\geq 2$ 
we find
\begin{align*}
\{e_1^* R e_1\,:\,R\in \UU(r_{n+1})\}\,&=\,
\{z\in\CC\,:\,|z|\leq 1\}\,=\,\{e_1^* R e_1\,:\,R\in \DD(r_{n+1})\}\,.
\end{align*}
This shows that $\WW^z_n$ is indeed a closed disc and $\breve\WW^z_n=\WW^z_n$.
\hfill $\blacksquare$\\[.3cm]
For part (ii) note that from the formulas above the representation $\alpha+ce_1^* R e_1$ gives the radius
\begin{equation}\label{eq-r_n}
r_{z,n}=|c|=\tfrac12 \|\beta^z_{0,n} \sqrt{I^{-1}} \|\, \| \sqrt{I^{-1}} \gamma^z_{0,n}\|\,. 
\end{equation}
We first prove:
$$
\text{Claim 3: $\im(z)>0, \,z\not\in \bigcup_{k=0}^\infty$ then:}\quad
\| \beta^z_{0,n} \sqrt{I^{-1}}\|^2\,=\,\frac{1}
{\im(z)\,\min_{\psi\in\Psi^z} \left(\sum_{k=0}^n \|\psi_k\|^2\, \right)}\,.
$$
Let us use the notation $\psi_{0,n}$ for $\bigoplus_{k=0}^n \psi_k \in \CC^{\GG_{0,n}}$.  Then from $H\psi=z\psi$ we have
$$
(H\psi)_{0,n} = H_{0,n} \psi_{0,n} - \hat\Phi_{n} \Upsilon_{n+1}^* \psi_{n+1} = z \psi_{0,n}
$$
Using $B=\Upsilon_{n+1}^* \psi_{n+1}$ this gives $\psi_{0,n}=(H_{0,n}-z)^{-1}\hat\Phi_{n} B$ and thus
$$
\beta^z_{0,n} B\,=\,\hat \Upsilon_{0}^* (H_{0,n}-z)^{-1} \hat\Phi_{n} B\,=\,\hat\Upsilon_{0}^*\psi_{0,n}=\Upsilon_0^*\psi_0=1
$$
as well as
$$
\im(z)\|\psi_{0,n}\|^2\,=\,\im(z)\,B^* \hat\Phi_{n}^* |H_{0,n}-z|^{-2} \hat\Phi_{n} B\,=\,B^* I B\,=\, \|\sqrt{I} B\|^2
$$
Now, $\beta^z_{0,n} \sqrt{I^{-1}} \sqrt{I} B =1$ and, therefore, using Cauchy-Schwarz we get
$$
\im(z) \|\psi_{0,n}\|^2 \|\beta^z_{0,n} \sqrt{I^{-1}}\|^2\,=\,
\|\sqrt{I} B\|^2\, \|\beta^z_{0,n} \sqrt{I^{-1}}\|^2\,\geq\,1
$$
Now, using Proposition~\ref{prop-3} one finds $\psi\in\Psi^z$
such that 
$$ B\,=\,\Upsilon_{n+1}^*\psi_{n+1}\,=\,
I^{-1} (\beta^z_{0,n})^* \,/\, (\beta^z_{0,n} I^{-1} (\beta^z_{0,n})^*)$$
 which gives equality above and proves Claim 3. Note that for  $z\not\in \bigcup_{k=0}^\infty A_k$ the row vector $\beta^z_{0,n}$ is not zero and as $I^{-1}>0$ we are not dividing by zero. 
Realizing that $\gamma^z_{0,n}=(\beta^{\bar{z}}_{0,n})^*$ one finds similarly
$$
\| \sqrt{I^{-1}}\gamma^z_{0,n}\|^2 \,=\,\frac{1}
{\im(z)\,\min_{\psi\in\Psi^{\bar z}} \left(\sum_{k=0}^n \|\psi_k\|^2\, \right)}\,.
$$
Together with Claim~3 and \eqref{eq-r_n} this proves part (ii).
\hfill $\blacksquare$\\[.3cm]
For part (iii) note that by Proposition~\ref{prop-bd} and \eqref{eq-sg-1} we find
$$
\alpha^z_{0,n+1}\,=\,\alpha_{0,n}^z\,+\,\beta^z_{0,n}((\alpha^z_{n+1})^{-1}-\delta^z_{0,n} )^{-1}\gamma^z_{0,n}\,=\,\alpha^{\alpha^z_{n+1},z}_{0,n}\,\in\, \WW^z_n\,.
$$
Formally varying the matrix-potential $V_{n+1}$ (which changes $\alpha_{n+1}^z$) then shows
$\breve \WW^z_{n+1} \subset  \WW^z_n$ which by (i) implies $\WW^z_{n+1} \subset \WW^z_n$.
\end{proof}

Now, as we see from (iii), the radius $r_{z,n}$ of the Weyl disc is shrinking and we can define
$r_{z,\infty}=\lim_{n\to\infty} r_{z,n}$ which is the radius of the limiting disc $\bigcap_{n=0}^\infty \WW^z_n$.
If $r_{z,\infty}=0$ then the intersection consists of one point and we have a limit point. If $r_{z,\infty}>0$ then we have a limit disc.

\begin{prop} \label{prop-W2}
{\rm (i)} Let $z, \bar z \not \in \bigcup_{n=0}^\infty A_n$. 
We have a limit disc, $r_{z,\infty}>0$, if and only if there exist  $\psi^z \in \Psi^z \cap \ell^2(\GG)$ and $\psi^{\bar z}\in \Psi^{\bar z} \cap \ell^2(\GG)$. In particular, in this case both deficiency indices are at least one, $d^\pm=\dim \ker(H_{min}^*\pm \imath)\geq 1$.\\
{\rm (ii)}
 If $H$ is self-adjoint on its natural domain, then we have the limit point case and 
for any sequence $w^z_n\in \WW^z_n$ we find
$$
\lim_{n\to\infty} w^z_n\,=\,\langle P_0 \Upsilon_0\;|\; (H-z)^{-1} P_0 \Upsilon_0\rangle\,=\,
\int (\lambda-z)^{-1}\,\mu_0(d\lambda)\;,
$$ and thus,
$$
\bigcap_{n=1}^\infty \WW^z_n=\{\,\langle P_0 \Upsilon_0\;|\; (H-z)^{-1} P_0 \Upsilon_0\rangle\,\}\quad \text{for any $z\in \CC^+$.} 
$$
\end{prop}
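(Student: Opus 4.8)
The plan is to read everything off the radius identity of Proposition~\ref{prop-W1}(ii). Put $m^z_n:=\min_{\psi\in\Psi^z}\sum_{k=0}^n\|\psi_k\|^2$, so that $r_{z,n}^2=(4\,\im(z)^2\,m^z_n\,m^{\bar z}_n)^{-1}$. For each fixed $\psi\in\Psi^z$ the partial sums $\sum_{k=0}^n\|\psi_k\|^2$ are non-decreasing in $n$, hence so are $(m^z_n)_n$ and $(m^{\bar z}_n)_n$; writing $m^z_n\uparrow m^z_\infty\in(0,\infty]$ and similarly for $\bar z$, the radii decrease to $r_{z,\infty}^2=(4\,\im(z)^2\,m^z_\infty\,m^{\bar z}_\infty)^{-1}$, so the limit disc case $r_{z,\infty}>0$ is equivalent to $m^z_\infty<\infty$ \emph{and} $m^{\bar z}_\infty<\infty$. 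If $\psi^z\in\Psi^z\cap\ell^2(\GG)$ exists then $m^z_n\le\|\psi^z\|^2$ for all $n$, whence $m^z_\infty<\infty$; together with the same statement for $\bar z$ this settles one implication.

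For the converse, assume $r_{z,\infty}>0$, so $m^z_\infty<\infty$, and for each $n$ pick a minimiser $\psi^{(n)}\in\Psi^z$ of $\sum_{k=0}^n\|\psi_k\|^2$ (the minimisation is effectively finite-dimensional, and an explicit minimiser already occurs in the proof of Proposition~\ref{prop-W1}(ii)). For fixed $N$ and all $n\ge N$, $\sum_{k=0}^N\|\psi^{(n)}_k\|^2\le m^z_n\le m^z_\infty$, so a diagonal extraction yields a subsequence along which $\psi^{(n)}_k\to\psi^\infty_k$ for every $k$, with $\sum_{k=0}^N\|\psi^\infty_k\|^2\le m^z_\infty$ for all $N$, hence $\psi^\infty\in\ell^2(\GG)$. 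Since $H$ has locally finite hopping, $(H\psi)_k$ depends on $\psi$ only through $\psi_{k-1},\psi_k,\psi_{k+1}$ (cf.~\eqref{eq-H}); passing to the limit coordinatewise in $H\psi^{(n)}=z\psi^{(n)}$ gives $H\psi^\infty=z\psi^\infty$ as a formal identity, and $\Upsilon_0^*\psi^\infty_0=1$, so $\psi^\infty\in\Psi^z\cap\ell^2(\GG)$; the same run with $\bar z$ produces $\psi^{\bar z}\in\Psi^{\bar z}\cap\ell^2(\GG)$. Finally $\psi^z\in\ell^2(\GG)$ with $H\psi^z=z\psi^z$ and $z\psi^z\in\ell^2(\GG)$ forces $\psi^z\in\dom(H_{min}^*)$, $H_{min}^*\psi^z=z\psi^z$, $\psi^z\ne 0$; as $\dim\ker(H_{min}^*-\zeta)$ is constant on $\CC^+$ and on $\CC^-$, both deficiency indices are $\ge 1$ ($d^-\ge1$ from $z\in\CC^+$ and $\psi^z$, $d^+\ge1$ from $\bar z\in\CC^-$ and $\psi^{\bar z}$).

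\noindent\textbf{Part (ii).}
Here I would bypass part (i) (which would anyway require $z,\bar z\notin\bigcup_nA_n$) and argue by strong resolvent convergence. For Hermitian $A\in\Her(r_{n+1})$ let $H^{[n]}_A$ be the self-adjoint operator on $\ell^2(\GG)$ equal to $H_{0,n}-\hat\Phi_nA\hat\Phi_n^*$ on $\ell^2(\GG_{0,n})$ and to $0$ on its orthogonal complement; since $P_0\Upsilon_0$ is supported in $S_0\subset\GG_{0,n}$ and $H^{[n]}_A$ is block diagonal, $\langle P_0\Upsilon_0,(H^{[n]}_A-z)^{-1}P_0\Upsilon_0\rangle=\alpha^{z,A}_{0,n}$. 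For \emph{any} sequence of Hermitian $A_n$ and any $\psi\in\HH_c$, once $n$ is large enough that $\supp\psi\subset\GG_{0,n-1}$ one has $\hat\Phi_nA_n\hat\Phi_n^*\psi=0$, hence $H^{[n]}_{A_n}\psi=H\psi$; since $\HH_c$ is a core for the self-adjoint operator $H$, the standard criterion for strong resolvent convergence (self-adjoint $H_n$ with $H_n\phi\to H\phi$ on a common core converge to $H$ in the strong resolvent sense) gives $H^{[n]}_{A_n}\to H$, so $\alpha^{z,A_n}_{0,n}\to m(z):=\langle P_0\Upsilon_0,(H-z)^{-1}P_0\Upsilon_0\rangle$ for all $z\in\CC\setminus\RR$. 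Now Proposition~\ref{prop-W1}(i) gives $\partial\WW^z_n\subset\breve\WW^z_n={\rm cl}\,\{\alpha^{z,A}_{0,n}:A\in\Her(r_{n+1})\}$, so picking a pair of antipodal points on $\partial\WW^z_n$ and Hermitian $A^\pm_n$ with $\alpha^{z,A^\pm_n}_{0,n}$ within $1/n$ of them yields $|\alpha^{z,A^+_n}_{0,n}-\alpha^{z,A^-_n}_{0,n}|\ge 2r_{z,n}-2/n$; applying the previous step to $(A^+_n)$ and $(A^-_n)$ sends the left side to $0$, forcing $r_{z,n}\to0$. Thus the limit point case holds for every $z\in\CC^+$; with $A_n=0$ one gets $\alpha^z_{0,n}\to m(z)$, and since $\alpha^z_{0,n}\in\WW^z_n$ and ${\rm diam}\,\WW^z_n=2r_{z,n}\to0$ along the nested family, $\bigcap_n\WW^z_n=\{m(z)\}$ and every sequence $w^z_n\in\WW^z_n$ converges to $m(z)$; the identity $m(z)=\int(\lambda-z)^{-1}\,\mu_0(d\lambda)$ is the spectral theorem for $H$.

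\noindent\textbf{Main obstacle.}
The genuinely delicate point is the hypothesis mismatch between the two parts: Proposition~\ref{prop-W1}(ii) --- the backbone of Part (i) --- is only proved for $z,\bar z\notin\bigcup_nA_n$, whereas Part (ii) must cover all of $\CC^+$. The strong-resolvent-convergence route above removes this, but it succeeds only because the pointwise convergence $\alpha^{z,A_n}_{0,n}\to m(z)$ can be upgraded, via the antipodal-point device and Proposition~\ref{prop-W1}(i), to shrinking of the whole disc uniformly over Hermitian $A_n$. Everything else --- the diagonal extraction in Part (i) and the check, using locally finite hopping, that the coordinatewise limit of the minimisers is again a formal solution --- is routine.
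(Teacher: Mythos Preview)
Your proof is correct and follows essentially the same approach as the paper. In Part~(i) you use a diagonal extraction where the paper invokes weak $\ell^2$-compactness of the truncated minimisers---these are equivalent here since each shell is finite-dimensional; in Part~(ii) your antipodal-point device makes explicit the step the paper compresses into ``this immediately implies the result'' after establishing strong resolvent convergence of $H_{0,n}+\hat\Phi_n A_n\hat\Phi_n^*$ to $H$ for arbitrary Hermitian $A_n$.
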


\begin{proof}
If $\psi^z\in \Psi^z\cap \ell^2(\GG)$ and $\psi^{\bar z} \in \Psi^{\bar z}\cap \ell^2(\GG)$ exist then Proposition~\ref{prop-W1}~(ii) immediately gives $r_{z,\infty}>0$.\\
Now let $r_{z,\infty}>0$. Then by 
 Proposition~\ref{prop-W1}~(ii)
you find $\psi^{z,n} \in \Psi^z$ such that
$$
\sum_{k=0}^n \|\psi^{z,n}_k\|^2\,<C\,
$$
for $n\in \NN$. Now, take $ \phi^{z,n}_k= \psi^{z,n}$ for $k\leq n$ and $\phi^{z,n}_k=0_k\in \CC^{s_k}$ for $k>n$.
This way, $\| \phi^{z,n}\|^2 < C$ for all $n$.
Then, there is a weakly convergent sub-sequence 
$\phi^{z,n_j} \to \psi^z \in \ell^2(\GG)$. In particular, $\psi^{z,n_j}_k \to \psi^z_k$ for any $k\in \NN_0$. Using continuity in the eigenvalue equation we find $\psi^z \in \Psi^z$.
Note, $H_{min}^*\psi^z= H\psi^z=z \psi^z$ with $\psi^z \neq 0,\,\psi^z \in \ell^2(\GG)$, and hence $d^+\geq 1$.
The same arguments hold for $z$ replaced by $\bar z$.\\[.2cm]
For part (ii), if $H$ is self-adjoint with its natural domain, i.e. the compactly supported vectors $\HH_{c}$ form a core, then $H_{0,n}+\hat\Phi_n A_n \hat\Phi_n^*$ (which act as zero in $\ell^2(\GG_{0,n})^\perp \subset \ell^2(\GG)$) converge to $H$ in strong resolvent sense\footnote{Indeed, for $\psi\in \HH_{c}$ it is obvious that $(H_{0,n}+\hat\Phi_n S_n \hat \Phi_n^*)\psi \to H \psi$ and if $\HH_{c}$ is a core, then this implies strong resolvent convergence} for any choice of $A_n=A_n^*$ as $n\to \infty$. This immediately implies the result.
\end{proof}

Note that Proposition~\ref{prop-W1} and \ref{prop-W2} prove Theorem~\ref{Theo-2}.

\section{Spectral averaging formula}

We let 
$$
\nu_C(d\lambda)\,=\,\frac{1}{\pi}\,\frac{1}{1+\lambda^2}\,d\lambda
$$
denote the standard Cauchy distribution on $\RR$.
The Cauchy distribution can be obtained as the pull back measure of the standard Haar measure on the unit circle $S^1\subset \CC$ 
by the Caley transform,
$$
\lambda\,\mapsto \,\Cc\cdot \lambda\,:=\,\pmat{1 & -\imath  \\ 1 & \imath }\,\cdot\,\lambda\,=\,(\lambda-\imath )(\lambda+\imath)^{-1}\;
$$
as a map from $\RR$ to $S^1=\{z\in\CC\,:\,|z|=1\}$.
Then we define the $n$-th level averaged spectral measure at $\Upsilon_0$ by
$$
 \bar\mu_{n}(f)\,:=\, \int_{\RR^{r_{n+1}}}\langle \hat \Upsilon_0\,|\,f(H_n-\hat\Phi_n\, \diag(a_1,\ldots,a_{r_{n+1}})\, \hat\Phi_n^*)\,|\,\hat\Upsilon_0\rangle\,
 \prod_{j=1}^{r_{n+1}} \nu_C(da_j)\,.
$$
that is we consider real diagonal matrices $A$ only and integrate the spectral measures over the product measure of the Cauchy distribution on the diagonal entries.

If $f\,:\,z \mapsto f(z)$ is a complex analytic function on 
the upper half plane $\HH^+=\{z\in \C\,:\,\im(z)>0\}$ and continuous up to the boundary and at infinity, then Cauchy's formula gives
$$
\int_{\RR} f(\lambda)\,\nu^C(d\lambda)\,=\,\int_{0}^{1} f(\Cc^{-1}\cdot e^{2\pi \imath t})\,dt\,=\,f(\Cc^{-1} \cdot 0)\,=\,
f(\imath)\,.
$$
 Note that 
$$
A\,\mapsto\, \alpha_{0,n}^{z,A}\,=\,\alpha_{0,n}^z+\beta_{0,n}^z\left[ \pmat{\II & \nul \\ -\delta_{0,n}^z& \II}
\Cc^{-1}\,\cdot\,\left( \Cc \cdot A\right) \right]
$$
is of this form in each diagonal coordinate of $A=\diag(a_1,\ldots,a_m)$. Therefore, the Stieltjes transform of $\bar \mu_n$ is given by
\begin{align}
\bar S_n(z)\,:=\,\int (z-\lambda)^{-1} \bar\mu(d\lambda)\,&=\,
\alpha^{z,\imath \II}_{0,n}\,.
\end{align}
The limit point property in the case where $H$ is uniquely self-adjoint gives
$\alpha^{z,\imath\II}_{0,n} \to \langle P_0 \Upsilon_0\,,\,(H-z)^{-1} P_0 \Upsilon_0\rangle$
which reflects the fact that $\bar\mu_n\to \mu$ weakly for $n\to\infty$.
 
\begin{proof}[Proof of Theorem~\ref{Theo-1}]
The key for proving Theorem~\ref{Theo-1} is to obtain the density of the spectral averaged measures $\bar \mu_n$ and identifying its singular part.
For this we consider the Stieltjes transform 
$$
z \,\mapsto\,  \bar S_n(z)=\alpha^{z,\imath \II}_{0,n}\,=\,\alpha^z_{0,n}\,+\,\imath \beta_{0,n}^z (\II-\imath \delta^z_{0,n})^{-1} \gamma^z_{0,n}  \qquad \text{for $\im(z)>0$}.
$$
For $\lambda\not \in A^1_{0,n} \subset \RR$ the limit $\bar S_n(\lambda+\imath \eta)$ for $\eta \downarrow 0$ exists. Thus, the singular part of $\bar \mu_n$ is supported on the finite set $A^1_{0,n}$
and given by a point measure $\nu_n$. The absolutely continuous part of $\bar \mu_n$ has the density (Radon-Nikodym derivative with respect to the Lebesgue measure $d\lambda$) 
\begin{align*}
\pi\,\frac{d \bar \mu_n}{d\lambda}\,&=\, \im(\bar S_n(\lambda+\imath0)\,=\, (\gamma_{0,n}^\lambda)^*\, \Re\left(\II-\imath \delta^\lambda_{0,n}\right)^{-1} \gamma^\lambda_{0,n} \\
&=\, (\gamma^\lambda_{0,n})^*\,\left(\II+(\delta^\lambda_{0,n})^2\right)^{-1}\,\gamma^\lambda_{0,n}\,=\, \|\bfv^\lambda_n\|^2 \qtx{for} \lambda\in \RR \setminus A^1_{0,n}
\end{align*}
where $\Re( A)= (A+A^*)/2$ denotes the real part of a matrix in $C^*$ algebra sense and we define
$$
\bfv^\lambda_n\,:=\, \left(1+(\delta^\lambda_{0,n})^2\right)^{-1/2} \gamma^\lambda_{0,n}\,.
$$
We used the facts that $\beta^\lambda_{0,n}=(\gamma^\lambda_{0,n})^*$,  $\alpha^\lambda_{0,n}$ is a real number and $\delta^\lambda_{0,n}$ is Hermitian (note $\lambda\in\RR$).
Also note that $\gamma^\lambda_{0,n}$ is indeed a $r_n \times 1$ matrix and hence a vector in $\CC^{r_n}$.
Now assume $\lambda \not \in A_{0,n}$, let $B\in \BB^\lambda_{0,n}$ and consider
$$
\bfu^\lambda_n \,:=\,\pmat{B \\ \delta^\lambda_{0,n} B}\, \in\, \D^\lambda_{n} \qtx{and}
\bfw^\lambda_n\,:=\,\left(\II+(\delta^\lambda_{0,n})^2 \right)^{1/2} B\;.
$$
Then, $\|\bfu^\lambda_n\|^2=\|\bfw^\lambda_n\|^2$ and  $1=\beta^\lambda_{0,n} B\,=\,(\bfv^\lambda_n)^*\bfw^\lambda_n=(\bfv^\lambda_n,\bfw^\lambda_n)$. Thus, the Cauchy-Schwartz inequality gives us
$$
\| \bfu^\lambda_n\|^{-2}\,=\, \|\bfw^\lambda_n\|^{-2}\,\leq\, \|\bfv^\lambda_n\|^2\,=\, \im \bar S_n(\lambda+\imath 0)\,.
$$
Using $ B_0=\|\bfv^\lambda_n\|^{-2} \left(\II+(\delta^\lambda_{0,n})^2\right)\gamma^\lambda_{0,n} \,\in\,\BB^\lambda_{0,n}$ and $\hat \bfu^\lambda_n=\smat{B_0 \\ \delta^\lambda_{0,n} B_0} \in \D^\lambda_{n}$
we get equality, therefore
$$
\im \bar S_n(\lambda+\imath 0)\,=\,\|\bfv^\lambda_n\|^2\,=\,\max_{\bfu^\lambda_n \in \D^\lambda_n}\,\|\bfu^\lambda_n\|^{-2}\,=\, \left( \min_{\bfu^\lambda_n \in \D^\lambda_n} \|\bfu^\lambda_n\|^2\right)^{-1} \;.
$$
Thus,
\begin{equation} \label{eq-mu-bar}
\bar \mu_n(d\lambda)\,-\, \nu_n(d\lambda)\,=\, \frac{1}{\pi} \|\bfv^\lambda_n \|^2 d\lambda\,=\, \frac{d\lambda}{\pi\,\min_{\bfu^\lambda_n \in \D^\lambda_n} \|\bfu^\lambda_n\|^2} \;.
\end{equation}
Note that this is a generalization of the spectral averaging formula \cite[Theorem III.3.2]{CL}.

Now let us consider the singular part $\nu_n$ of $\bar \mu_n$ closer. We already know that it is supported on $A^1_{0,n}$ which is finite and
hence it is a point measure. As the Cauchy distribution is absolutely continuous we see that $\nu_n(\{\lambda_0\})=\bar\mu_n(\{\lambda_0\})>0$ 
for $\lambda_0\in A^1_{0,n}$ if and only if $\Aa=\{\bfa\in \RR^{r_{n+1}}\,:\, \det(H_{0,n}-\hat \Phi_n \diag(\bfa)\hat\Phi_n^*-\lambda_0\II)= 0 \}$ has positive (Lebesgue) measure.
It is a well known linear algebra fact (using Fubini and rank-one perturbations) that this can only occur if there is an eigenvector $\psi\neq 0,\,\psi\in \ell^2(\GG_{0,n})$ which is orthogonal to all column vectors of $\Phi_n$ and has some overlap with $\Upsilon_0$. This means
that $H_{0,n} \psi=\lambda_0  \psi$, $\Phi_n^* \psi_n = \nul$ and $\langle \upsilon_0, \psi_0\rangle=\Upsilon_0^* \psi_0\neq 0$. 
This in fact implies that $\psi$ is an eigenvector of $H_{0,n}-\hat\Phi_n A \hat \Phi_n^*$ for any $A\in \Her(r_{n+1})$. 
Moreover, only such eigenvectors $\psi$ contribute to $\bar \mu_n(\{\lambda_0\})$. Hence, let 
$$
\HH_{n,\lambda_0}:=\ker(H_{n,0}-\lambda_0 \II) \cap {\rm Ran}(\hat\Phi_n)^\perp
$$ 
be the part of the eigenspace of $H_{0,n}$ for $\lambda_0$
which is orthogonal to the range\footnote{we interpret $\hat\Phi_n$ as a map from $\CC^{r_{n+1}}$ to $\ell^2(\GG_{0,n})$} of $\hat\Phi_n$ (orthogonal to all the column vectors of $\hat\Phi_n$).
Let 
$$
\PP_{n,\lambda_0}\,:\,\CC^{r_{n+1}}\to \HH_{n\,\lambda_0} ,\quad \text{be the orthogonal projection, then}
$$
$$
\nu_n(\{\lambda_0\})\,=\,\bar \mu_n(\{\lambda_0\})\,=\, \left\| \PP_{n,\lambda_0} \hat\Upsilon_0 \right\|^2\;.
$$
From $\Phi_n^* \psi_n=\nul$ it also follows that any eigenvector $\psi \in \HH_{n,\lambda_0}$ is an eigenvector  of $H_{0,m}-\hat \Phi_m A \hat \Phi_m^*$ for any $A\in \Her(r_{m+1})$ and any $m\geq n$,
that is to say $\HH_{n,\lambda_0} \subset \HH_{m,\lambda_0}$ for any $m\geq n$.
Here, formally we have to consider the natural embedding of $\HH_{n,\lambda_0} \subset \ell^2(\GG_{0,n})$ into $\ell^2(\GG_{0,m})$.
This immediately implies for $m\geq n$ that $\nu_m(\{\lambda_0\})\geq \nu_n(\{\lambda_0\})$ and $\lambda_0 \in \bigcap_{m\geq n} A^1_{0,m}$.
Hence, $\nu_n$ is a sequence of growing positive point measures which is bounded by $\mu$ and thus it converges to a point measure $\nu$ supported on
$\bigcup_n \bigcap_{m\geq n} A^1_{0,m}$. 
Moreover, any eigenvector $\psi_0 \in \HH_{n,\lambda_0}$ is an eigenvector of $H$ (or better the formal embedding of $\psi_0$ into $\ell^2(\GG)$ is) and $\nu$ is generated by those eigenvectors.
This means, let
$$
\HH_{\lambda_0}\,=\,\overline{\bigcup_{n=1}^\infty P_{0,n} \HH_{n,\lambda_0}\,}\,\subset\,\ker(H-\lambda_0)\;.
$$
be the closure of the union of the embeddings of $\HH_{n,\lambda_0}$ into $\ell^2(\GG)$ and let
$\PP_{\lambda_0} \,:\,\ell^2(\GG) \to \HH_{\lambda_0}$ be the orthogonal projection. Then, $\nu(\{\lambda_0\})=\|\PP_{\lambda_0} \hat\Upsilon_0\|^2$ where we now consider $\hat \Upsilon_0$ as an element in $\ell^2(\GG)$ (that is $\hat \Upsilon_0=P_0\Upsilon_0$, here).
The convergence of $\bar \mu_n \to \mu$ gives
$$
\mu(d\lambda)\,-\,\nu(d\lambda)\,=\,\lim_{n\to\infty} \frac{\|\bfv^\lambda_n\|^2}{\pi}\,=\,\lim_{n\to\infty} \,\frac{d\lambda}{\pi\,\min_{\bfu^\lambda_n \in \D^\lambda_n} \|\bfu^\lambda_n\|^2}
$$
where the limit has to be understood as a weak limit of finite measures.
This proves the theorem.
\end{proof}

We like to remark that the measures $\nu_n$ and hence $\nu$ need not be zero. Already in the one-channel case there can be compactly supported eigenfunctions, cf. \cite{Sa-AT, Sa-OC}.

\section{Criteria for absolutely continuous spectrum}

In this section we will prove Theorem~\ref{Theo-1a} and Theorem~\ref{Theo-4}.
First of all, we note that for any choice of matrices $\lambda\mapsto T^\lambda_{0,n} \in \TT^\lambda_{0,n}$ we find
$$
\|\bfv^\lambda_n\|^2 \,\geq\, \|T^\lambda_{0,n} \smat{1\\0}\|^{-2}\;
$$
with $\bfv^\lambda_n\,:=\,\left(1+(\delta^\lambda_{0,n})^2\right)^{-1/2}\,\gamma^\lambda_{0,n}$ as defined above.
As this density appears through an average of spectral measures at $\Upsilon_0$ over a probability distribution, we find
$$
\int \|\bfv^\lambda_n\|^2\,d\lambda \,\leq\, \|\Upsilon_0\|^2
$$
and, hence, $\lambda \mapsto  \|T^\lambda_{0,n} \smat{1\\0}\|^{-2}$ is a positive $L^1$ function for any measurable choice of $\lambda \mapsto T^\lambda_{0,n}$. Moreover, any infimum over such a function is also in $L^1$.

\begin{proof}[Proof of Theorem~\ref{Theo-1a}]
Part (i). We have
$$
\mu(d\lambda)\,\geq\,
(\mu-\nu)(d\lambda)\,=\,
\lim_{k\to \infty} \frac1\pi \|\bfv^\lambda_{n_k}\|^2\,d\lambda\,\geq\, \left( \sup_{k} \left\|T^\lambda_{0,n_k} \pmat{1\\0}\,\right\|^2 \right)^{-1}\,d\lambda\;.
$$
On the right hand side we have an absolutely continuous measure with a positive density on $(a,b)$ by the assumptions of Theorem~\ref{Theo-1a}.
Therefore, the bigger positive measure $\mu$ has to have an absolutely continuous part with a bigger support than $[a,b]$.
This means, $[a,b]\subset \spec_{ac}(H)$.\\[.3cm]
To show part (ii)  note that using $\| \bfv^\lambda_n\|^{-2} \leq \|T^\lambda_{0,n} \smat{1\\0}\|$ 
the condition in the Theorem implies
$$
\liminf_{n\to\infty} \int_K -\log\left(\|\bfv^\lambda_n\|^2 \frac{1}{w(\lambda)}\right)\,w(\lambda)\,d\lambda\,<\,\infty\;
$$
for any compact set $K\subset (a,b)$ of positive Lebesgue measure.
Using the arguments as in \cite{DK}, or Proposition~\ref{prop-B1}, it follows that the limit measure $\mu-\nu$ has an absolutely continuous part everywhere in $(a,b)$.
\end{proof}

\begin{proof}[Proof of Theorem~\ref{Theo-4}]
By the arguments in the previous section we find a matrix $\hat T^\lambda_{0,n}\in \TT^\lambda_{0,n}$ such that
$$
\|\bfv^\lambda_{n}\|\,=\,\left\|\hat T^\lambda_{0,n}\pmat{1\\0}\right\|^{-1}\,.
$$
From Proposition~\ref{prop-2} we find
$$
1\,=\,\pmat{\bar u_n(\lambda)&1} J_1 \pmat{1\\0}\,=\,  \left(T^\lambda_{0,n} \pmat{u_n(\lambda) \\ 1}\right)^* J_{r_{n+1}} \hat T^\lambda_{0,n} \pmat{1\\0}
$$
and for that reason
$$
1\,\leq\, \left\| T^\lambda_{0,n} \pmat{u_n(\lambda)\\ 1}\right\|\,\|J_{r_{n+1}}\|\,\left\|\hat T^\lambda_{0,n} \pmat{1\\0}\right\|\,=\,
\left\| T^\lambda_{0,n} \pmat{u_n(\lambda)\\ 1}\right\|\,/\, \|\bfv^\lambda_n\|\;.
$$
It follows immediately for $f_n(\lambda):=\frac1\pi \|\bfv^\lambda_n\|^2$ that 
$$
\liminf_{n\to\infty} \int_a^b\,|f_n(\lambda)|^{p}\,d\lambda \,\leq\,\liminf_{n\to\infty} \int_a^b \left\|\frac1\pi T^\lambda_{0,n}\pmat{u_n(\lambda)\\1} \right\|^{2p} \,d\lambda\,<\,\infty\,.
$$
Hence, there is a sub-sequence $f_{n_k}$ such that the $L^p$ norm of $f_{n_k}$ in $(a,b)$ is uniformly bounded along this sub-sequence.
Using separability, reflexiveness and weak-$*$ compactness, we find a further sub-sequence which converges weakly in $L^p$, i.e.
$f_{n_j} \rightharpoonup f$. Then, integrating against bounded continuous functions $g$  in $(a,b)$ which are also in $L^q(a,b)$ where $1/q+1/p=1$, we see that
$$
(\mu-\nu)(g)\,=\,\lim_{k\to \infty} \int_a^b g(\lambda) f_{n_k}(\lambda)\,d\lambda\,=\,
\int_a^b g(\lambda)\, f(\lambda)\,d\lambda
$$
 and hence $(\mu-\nu)(d\lambda)= f(\lambda)d\lambda$ in the interval $(a,b)$.
 This proves part (i). Part (ii) follows immediately from part (i) and Theorem~\ref{Theo-1a}~(ii).
\end{proof}

\section{Application to random models}

In this section we first prove Theorem~\ref{Theo-5} and then Theorem~\ref{Theo-6} and Theorem~\ref{Theo-7} will follow with little extra work.
Thus, consider the operator $H^{(1)}_\omega=\Delta^{(1)}+V_\omega$.
\begin{proof}[Proof of Theorem~\ref{Theo-5}]
The matrix-potential part of $\Delta^{(1)}$ in the $n$-th shell is given by
$$
A_n\,=\,\diag(a_1, a_2,\ldots, a_{s_n})\,\in\,\RR^{s_n \times s_n}\,.
$$
We will use $\Phi_n=\II_{s_n}$ for $n\geq 0$, giving $\Upsilon_n=\pmat{\II_{s_{n-1}} \\ \nul}\in\RR^{s_n \times s_{n-1}}$ 
for $n\geq 1$. Furthermore we let $\Upsilon_0\in \ell^2(S_0)$ be some normalized root vector, so that generally $\Upsilon_n^* \Upsilon_n=\one$.
Similar to Remark~\ref{rem-T}~(ii), the special choices $B=(A+V_n-z)\Upsilon_n$ and $\bbb=\Upsilon_n$ lead to the transfer matrices
$$
T^z_n\,=\,\pmat{(A_n+V_n-z\II)\Upsilon_n & -\Upsilon_n \\ \Upsilon_n & \nul}\,=\,\pmat{A_n+V_n-z\II & -\II \\ \II & \nul} \pmat{\Upsilon_n & \nul \\ \nul & \Upsilon_n}\,.
$$
These choices lead to nice entire functions in $z\mapsto T^z_n$.

Let us also note that if $H^{(1)}_\omega\psi=\lambda\psi$ and $\psi_m=\nul=\psi_{m+1}$, then, it follows inductively that $\psi_n=\nul$ for all $n\leq m$ (as the width is growing, $s_{n+1}\geq s_n$). Hence, there is no non-zero compactly supported eigenvector. Thus, the measure $\nu$ as in Theorem~\ref{Theo-1} is  zero.\\[.2cm] 
\noindent {\bf Claim:} We have
$$
\liminf_{n \to \infty} \int_a^b \EE \|T^\lambda_{0,n}\|^4\,d\lambda\,<\,\infty\;
$$ 
for any compact sub-interval $[a,b]$ of $I_1=(-2+\sup_j a_j,2+\inf_j a_j) $. \\
Using Theorem~\ref{Theo-4}, the argument above, Fatou's lemma and Fubini,
it will follow\footnote{Indeed, this implies $
\EE\,\liminf_{n\to \infty} \int_a^b \|T^\lambda_{0,n}\|^4\,d\lambda\,<\,\infty\;
$ by Fubini and  Fatou's lemma and a bound in expectation implies almost sure boundedness to use Theorem~\ref{Theo-4}~(ii).} 
that the spectral measure at $P_0\Upsilon_0$ is purely absolutely continuous in $I_1$.\\[.2cm]
So let us prove the claim.
By sub-multiplicativity of norms, we can exchange $T^\lambda_{0,n}$ and $T^\lambda_n$ by $Q_n(\lambda)^{-1}T^\lambda_{1,n} Q_{0}(\lambda)$ and $Q_n(\lambda)^{-1}T^\lambda_n Q_{n-1}(\lambda)$, respectively, 
where $Q_n(\lambda)\in \GL(2s_n)$ (with $s_{-1}=1$)
and $\|Q_n(\lambda)\|$, $\|Q_n(\lambda)^{-1}\|$ are uniformly bounded for $\lambda\in[a,b]$ and $n\in \ZZ_+$.
Using the bound
$\|T\|\leq \|Te_1\|+\|Te_2\|$ for any orthonormal basis $(e_1, e_2)$ in $\CC^2$ and any $s\times 2$ matrix $T$, it is, thus, sufficient to prove 
\begin{equation}\label{eq-need1}
\liminf_{n \to \infty} \int_a^b \EE \|Q_n(\lambda)^{-1}\,T^\lambda_{0,n}\,Q_{-1}(\lambda)\, \bfu\|^4\,d\lambda\,<\,\infty\;
\end{equation}
for $Q_n(\lambda)\in \GL(2s_n)$ as above and all fixed $\bfu\in \CC^{2}$.

For $\lambda \in I_1$ all values on the diagonal of the diagonal matrix $A_n-\lambda \II$ are elements of $(-2,2)$ and we may write
$A_n-\lambda\II=2\cos(K_n)$ for some real diagonal matrix $K_n=K_n(\lambda)=\diag(k_1,\ldots,k_{s_n})$ with $k_j(\lambda)\in(0,\pi)$
and $2\cos(k_j)=a_j-\lambda$.
For the unperturbed  operator $\Delta^{(2)}$ (all $V_n=0$) the transfer matrices for $n\geq 1$ are equal to
$$
\widehat T^{\lambda}_n \,=\,\pmat{(A_n-\lambda\II)\Upsilon_n & -\Upsilon_n \\ \Upsilon_n & \nul}\,=\,\pmat{2\cos(K_n) & -\II \\ \II & \nul}
\pmat{\Upsilon_n \\ & \Upsilon_n}
$$
and we define
$$
Q_n\,=\,Q_n(\lambda)\,:=\,\pmat{e^{\imath K_n(\lambda)} & e^{-\imath K_n(\lambda)} \\ \II & \II}\;.
$$
Using $\cos(K_n) \Upsilon_n= \Upsilon_n \cos(K_{n-1})$ one obtains
$$
Q_n(\lambda)^{-1} \widehat T^{\lambda}_n Q_{n-1}(\lambda)\,=\,\pmat{e^{\imath K_n} \Upsilon_n &  \\ & e^{-\imath K_n} \Upsilon_n}\,=:\,\Rr_n
$$
where $\Rr_n=\Rr_n(\lambda)$ depends on $\lambda$ as well.
This implies
$$
Q_n^{-1} T^\lambda_n Q_{n-1}\,=\, \Rr_n\,+\, \Vv_n \qtx{where} \Vv_n=\Vv_n(\lambda)\,=\,Q_n(\lambda)^{-1} \pmat{V_n \Upsilon_n& \nul \\ \nul & \nul} Q_{n-1}(\lambda)\,.
$$
Note that $\Rr_n$ is an isometry, $\|\Rr_n u\|=\|u\|$, and for $\lambda\in[a,b]\subset I_1$, the values $e^{\imath k_j(\lambda)}$ are uniformly bounded away from $1$ and $-1$. Hence, 
$\|Q_n(\lambda)\|$ and $\|Q_n(\lambda)^{-1}\|$ are uniformly bounded in $[a,b]$ and $n$. Moreover, the random matrices $\Vv_n(\lambda)$ are independent and satisfy a bound as in \eqref{eq-V-estimate}  uniformly in $\lambda \in [a,b]$. 
We will now omit the dependence on $\lambda$ in most calculations.
Furthermore, we consider some starting vector $\bfu\in \CC^2$ and define
$$
\bfu_n\,:=\, Q_n^{-1} T^\lambda_{0,n} Q_{-1}\,\bfu \qtx{implying} \bfu_n\,=\, (\Rr_n+\Vv_n) \bfu_{n-1}\,.
$$
Then, we find
$$
\|\bfu_n\|^4 =(\bfu_n^* \bfu_n)^2=\left( \|\bfu_{n-1}\|^2+ \bfu_{n-1}^* \left( \Rr_n^* \Vv_n+\Vv_n^*\Rr_n\right) \bfu_{n-1}\,+\, \| \Vv_n \bfu_{n-1}\|^2\right)^2 
$$
Squaring out the last term, taking expectations and using the independence of $\Vv_n$ from $\bfu_{n-1}$ we find
$$
\EE(\|\bfu_n\|^4)\,\leq\, \EE(\|\bfu_{n-1}\|^4)\,\EE \left( 1+4\|\Vv_n\|^2+\|\Vv_n\|^4+4 \|\EE(\Vv_n)\|+2 \|\Vv_n\|^2+4\|\Vv_n\|^3 \right)\,.
$$
For the terms only having one $\Vv_n$ factor it is important to first do the expectation and afterwards the norm bound.
Using $\EE(\Vv_n\|^3)\leq \EE(\|\Vv_n\|^2+\|\Vv_n\|^4)$ and \eqref{eq-V-estimate} we see that the product over $n$ of the terms in the bracket on the right hand side remain bounded, uniformly for $\lambda \in [a,b]$. Hence, for $\bfu_n=\bfu_n(\lambda)$ we obtain
$\int_a^b \EE(\|\bfu_n(\lambda)\|^4 d\lambda$ is uniformly bounded in $n$ which implies the claim \eqref{eq-need1}. \hfill $\blacksquare$\\[.2cm]
As explained above, this applies for any $\Upsilon_0=\delta_{(0,j)}\in\ell^2(S_0)$. Thus, the spectral measure at $\delta_{(0,j)}$ is almost surely purely absolutely continuous in $I_1$, and there is spectrum.

Let us define the sub-graphs $\GG^+_{n}=\bigcup_{m\geq n} S_m$ and let $H^{(1)}_{\omega,n}$ be the restriction of $H^{(1)}_\omega$ to
$\GG^+_{n}$. Again, using Theorem~5 will give that for any point $(n,j)\in S_n$ the spectral measure at $\delta_{n,j}$ of the operator
$H^{(1)}_{\omega,n}$ is, almost surely, purely absolutely continuous in $I_1$.
As $\GG_1$ is countable, we find a set $\Omega_0\subset \Omega$ of probability one, $\PP(\Omega_0)=1$, such that for all $\omega\in \Omega_0$ and
all $(n,j)\in \GG_1$, the
spectral measure of $H^{(1)}_{\omega,n}$ at $\delta_{n,j}$ is purely absolutely continuous in $I_1$.
Combining this with a soft modification of \cite[Lemma~2.2.]{FLSSS} one obtains that the spectrum of $H^{(1)}_\omega$ is purely absolutely continuous in $I_1$ for all $\omega \in \Omega_0$.
\end{proof}

The proof of Theorem~\ref{Theo-6} follows directly from Theorem~\ref{Theo-5}.
\begin{proof}[Proof of Theorem~\ref{Theo-6}]
Recall for the binary rooted tree $\GG_2$ we have $\GG_2=\big\{(n,j)\,:\,n\in\NN_0, \,j\in\{1,\ldots,2^n\}\,\big\}$ and
$S_n=\{n\}\times \{1,\ldots, 2^n\}$. We define the normalized symmetric and anti-symmetric mean-field vectors $\chi_n,\,\zeta_n\,\in\,\CC^{2^n}$ 
$$
\chi_n\,:=\,\frac{1}{\sqrt{2^{n}}}\,\pmat{1 \\ \vdots \\ 1}\,\in\,\CC^{2^n}\qtx{and} \zeta_n\,:=\, \frac{1}{\sqrt{2}}\pmat{\chi_{n-1} \\ -\chi_{n-1}}\,\in\,\CC^{2^n},\,n\geq 1
$$
and we let $\nul_n=\chi_n-\chi_n$ denote the zero vector in $\CC^{2^n}$. Then, we define the matrices
$$
U_n\,:=\,\pmat{\chi_n & \zeta_n & \mat{\zeta_{n-1} \\ \nul_{n-1}} & \mat{\nul_{n-1} \\ \zeta_{n-1} } & \mat{\zeta_{n-2} \\ \nul_{n-2} \\ \nul_{n-2} \\ \nul_{n-2} }& \mat{\nul_{n-2} \\ \zeta_{n-2} \\ \nul_{n-2} \\ \nul_{n-2} }  &  
\cdots & \mat{\zeta_1 \\ \nul_1 \\ \vdots \\ \nul_1} & \cdots & \mat{\nul_1 \\ \vdots \\ \nul_1 \\ \zeta_1} }\,.
$$
It is not hard to check that $U_n$ is a unitary $2^n \times 2^n$ matrix and we can define the unitary operator $\Uu$ on
$\ell^2(\GG_2)=\bigoplus_n \ell^2(S_n)$ by
$$
(\Uu \psi)_n\,=\, U_n\,\psi_n\qtx{where} \psi_n \in \ell^2(S_n), \psi=\bigoplus_n \psi_n\,.
$$
Let $\Psi^{(n,k)}=\bigoplus_n \Psi^{(n,k)}_n$ for $k \leq 2^n$ be defined by 
$$
 \Psi^{(n,k)}\,=\, U_n e_k\qtx{and}
\Psi^{(n,k)}_m\,=\,0 \qtx{for} m\neq n \,.
$$
where $e_k$ denotes the $k$-th canonical basis vector in $\CC^{2^n}$. Hence,  $U_n e_k$ is simply th $k$-th column vector of the matrix $U_n$.
Then it is not hard to check that
$$
\Delta^{(2)} \Psi^{(n,k)} = -\sqrt{2} \Psi^{(n-1,k)} \,-\,\sqrt{2} \Psi^{(n+1,k)}
$$
where $\Psi^{(n-1,k)}=0$ for $k>2^{n-1}$ or $n=0$.
Thus, considering the model $\Delta^{(1)}$ on $\GG_1$ as above with the special case $s_n=2^n$ and $a_j=0$ for all $j\in \NN$ we find
$$
\Uu^* \,\Delta^{(2)}\,\Uu\,=\, \sqrt{2}\,\Delta^{(1)}\,,\quad
\Uu^*\,H^{(2)}_\omega\, \Uu\,=\,\sqrt{2}\,\Delta^{(1)}\,+\,\Uu^*\, V_\omega\, \Uu
$$
Now, $\Uu^* \,\V_\omega \,\Uu$ is a shell-matrix potential of the same type as $V_n$ where $V_n$ is replaced by $V'_n=U_n^* V_n U_n$.
In particular, it also satisfies \eqref{eq-V-estimate}. Therefore, Theorem~\ref{Theo-5} gives that the spectrum of $H^{(2)}_\omega$ is almost surely purely absolutely continuous in $(-2\sqrt{2}\,,\,2\sqrt{2})$, and there is spectrum.
\end{proof}

Now, Theorem~\ref{Theo-7}~(i) can also be regarded as a special case of Theorem~\ref{Theo-5}.
The step from Theorem~\ref{Theo-7}~(i) to Theorem~\ref{Theo-7}~(ii) will then  be done as in \cite{FHS3}.

\begin{proof}[Proof of Theorem~\ref{Theo-7}]
Without loss of generality we may assume that $A$ is diagonal. If not, let $A\in \Her(s)$ be diagonalized by the unitary $U$, 
that is $U^{*} A U = \diag(a_1,\ldots,a_s)$, 
and consider $\Uu^{*} \Delta^{(3)} \Uu$, $\Uu^* H^{(3)}_\omega \Uu$ with $(\Uu \psi)_n=U\psi_n$. There, $A$ and $V_n$ are exchanged by $U^* A U \in \Her(s)$ and $U^*V_n U$, respectively,
which is another sequence of random Hermitian matrices satisfying \eqref{eq-V-estimate}. 
Hence, we let 
\begin{equation}
A\,=\,\diag(a_1,\ldots, a_s)\;.
\end{equation}
With this basis change we also see that $\Delta^{(3)}$ is (unitarily equivalent to) a direct sum of one-dimensional operators $\bigoplus_{j=1}^s h_j$
where $(h_j u)_n=-u_{n-1}-u_{n+1}+a_j u_n$ is an operator on $\ell^2(\ZZ_+)$ given by the discrete Laplacian and a constant potential $a_j$. 
It is well known that the spectrum of $h_j$ is purely absolutely continuous and given by the interval $[a_j-2,a_j+2]$, so the spectrum of $\Delta^{(3)}$ is given by the union $I=\bigcup_{j=1}^s [a_j-2,a_j+2]$ of these bands. By \eqref{eq-V-estimate}, and the fact that $\#S_n=s$ is constant, the difference $H^{(3)}_\omega-\Delta^{(3)}$ is a Hilbert Schmidt-operator (almost surely). Thus, the essential spectrum of $H^{(3)}_\omega$ is also given by $I$, almost surely.
Moreover, $\Delta^{(3)}$ corresponds to $\Delta^{(1)}$ of Theorem~\ref{Theo-5} with fixed width $s_n=s$, $S_n=\{n\}\times S$ with $S=\{1,\ldots,s\}$ and
$I_3=\bigcap_j (a_j-2,a_j+2)$ corresponds to $I_1$ as above. Thus, Theorem~\ref{Theo-5} implies that, almost surely, 
the spectrum of $H^{(3)}_\omega$ is purely absolutely continuous in $I_3$.

For part (ii), first consider a subset $J\subset S=\{1,\ldots,s\}$ and define the canonical embedding $P_J\,:\,\ell^2(\ZZ_+\times J)\hookrightarrow \ell^2(\ZZ_+\times S)$. For $\lambda \in I_0$  the set $J(\lambda)=\{j\in S\,:\,|a_j-\lambda|<2\}$ is not empty. We also define
$\complement J:=S\setminus J$ and let 
$I_J=\{\lambda\,:\, J(\lambda)=J\} \setminus \bigcup_j \{a_j-2,a_j+2\}$.
Then, we may write 
$$
H^{(3)}_\omega\,\equiv\, \pmat{P_J^* H^{(3)}_\omega P_J & P_J^* H^{(3)}_\omega P_{\complement J} \\ P_{\complement J}^* H^{(3)}_\omega P_J & P_{\complement J}^* H^{(3)}_\omega P_{\complement J} }
$$
where $\equiv$ denotes unitary equivalence. By Theorem~\ref{Theo-7}~(i) $P_J^* H^{(3)}_\omega P_J$ has almost surely pure absolutely continuous spectrum in
$I_J\subset I$, and there is spectrum. By the considerations above on the essential spectrum of $H^{(3)}_\omega$, $P_{\complement J}^* H^{(3)}_\omega P_{\complement J}$ has no essential spectrum in $I_J$. 
As $P_J^* \Delta^{(3)} P_{\complement J}=\nul$, the off diagonal terms only come from the random potential $V_n$ and are almost surely Hilbert-Schmidt operators from $\ell^2(\ZZ_+\times J)$ to $\ell^2(\ZZ_+\times\complement J )$ and vice versa.
Thus, using \cite[Theorem~6]{FHS3} we find that $H$ has almost surely absolutely continuous spectrum in all of $I_J$ (not necessarily pure).
Noting that $\bigcup_{J\subset S} I_J$ is dense in $I$, part (ii) follows as
$I\subset \spec_{ac}(H^{(3)}_\omega) \subset \spec_{ess}(H^{(3)}_\omega)=I$ almost surely.
\end{proof}

\section*{Acknowledgment}

C.S. has received funding from the Chilean 
Fondo Nacional de Desarrollo Cient\'ifico y Tecnol\'gico (FONDECYT) 1161651 and the 
Iniciativa Cient\'ifica Milenio through the Nucleo Mileneo MESCD on 'Stochastic Models of Complex and Disordered Systems'.

\appendix

\section{Some linear algebra facts}

As before with $\KK^{l\times m}$ we define the set of $l \times m$ matrices over $\KK$ where $\KK=\RR$ or $\KK=\CC$.
For $A\in \CC^{l\times l}$ with $l\leq m$ of full rank $l$ there exist right inverses, that is
matrices $\hat A\in \CC^{m \times l}$ such that $A\hat A=\II_l$ where $\II_l$ is the $l\times l$ unit matrix. 

\begin{prop}\label{prop-A1}
Let $l\leq m \leq n$, $A\in \KK^{l \times m}$ of full rank $l$ and $B\in \KK^{m\times n}$ of full rank $m$ with $\KK=\RR$ or $\KK=\CC$.
Then $C=AB \in \KK^{l\times n}$ is of full rank $l$ and the set of right inverses $\hat C \in \KK^{n \times l}$  is given by the set of products $\hat B \hat A$ of right inverses to $B$ and $A$, i.e.
$$
\{\hat C\in \KK^{n \times l}\,:\,AB\hat C=\II_{l} \}\,=\,\{\hat B \hat A\,:\, \hat A\in \KK^{m\times l}, \hat B \in \KK^{n\times m},\;A\hat A=\II_{l}\,\wedge\,B\hat B=\II_{m}\,\}
$$
\end{prop}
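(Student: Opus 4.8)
The plan is to prove the two set inclusions separately, treating the ``$\supseteq$'' direction as essentially trivial and the ``$\subseteq$'' direction as the one requiring actual work. First I would dispose of the preliminary claims: since $A$ is surjective (full rank $l$) and $B$ is surjective (full rank $m$), the composition $C=AB\colon\KK^n\to\KK^l$ is surjective, hence of full rank $l$, so right inverses to $C$ exist. This also means every linear map under discussion has a right inverse, which legitimizes all the inverse-picking below. For the inclusion ``$\supseteq$'': if $A\hat A=\II_l$ and $B\hat B=\II_m$, then $AB(\hat B\hat A)=A(B\hat B)\hat A=A\II_m\hat A=A\hat A=\II_l$, so $\hat B\hat A$ is indeed a right inverse to $C$. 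This is just associativity.

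The substance is the inclusion ``$\subseteq$'': given an arbitrary $\hat C\in\KK^{n\times l}$ with $AB\hat C=\II_l$, I must factor it as $\hat C=\hat B\hat A$ for suitable right inverses $\hat B$ of $B$ and $\hat A$ of $A$. The idea is to set $\hat A:=B\hat C\in\KK^{m\times l}$; then $A\hat A=AB\hat C=\II_l$, so $\hat A$ is a right inverse to $A$, as required. It remains to find a right inverse $\hat B$ of $B$ with $\hat B\hat A=\hat C$, i.e. $\hat B(B\hat C)=\hat C$. The natural attempt is to start from some fixed right inverse $\hat B_0$ of $B$ and correct it. Note $\hat B_0\hat A=\hat B_0 B\hat C$, and $\hat B_0 B$ is the projection onto a complement of $\ker B$; the difference $\hat C-\hat B_0\hat A=(\II_n-\hat B_0 B)\hat C$ has all its columns in $\ker B$. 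So I want to modify $\hat B_0$ to $\hat B:=\hat B_0+K$ where $K\colon\KK^m\to\KK^n$ satisfies $K(B\hat C)=\hat C-\hat B_0\hat A$ and $BK=\nul$ (the latter guarantees $\hat B$ remains a right inverse of $B$). Since $\hat A=B\hat C$ has full rank $l$ (because $A\hat A=\II_l$ forces $\hat A$ injective, hence of rank $l$), its columns are linearly independent, so one can choose $K$ freely on a complement of $\ran(B\hat C)^{\perp}$ — more concretely, pick a left inverse $\hat A^{+}$ of $\hat A$ (exists since $\hat A$ is injective) and set $K:=(\hat C-\hat B_0\hat A)\hat A^{+}$. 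Then $K\hat A=\hat C-\hat B_0\hat A$ as desired, and $BK=B(\hat C-\hat B_0\hat A)\hat A^{+}=(\hat A-\hat A)\hat A^{+}=\nul$, using $B\hat C=\hat A$ and $B\hat B_0\hat A=\hat A$. Hence $\hat B:=\hat B_0+K$ is a right inverse of $B$ with $\hat B\hat A=\hat C$, completing the factorization.

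The only genuine subtlety — and where I expect to spend the most care — is verifying that $K$ as constructed really does land in $\KK^{n\times m}$ with $BK=\nul$ and simultaneously $K\hat A=\hat C-\hat B_0\hat A$; this hinges on $\hat A=B\hat C$ having full column rank $l$, which I would isolate as a small lemma ($A\hat A=\II_l$ implies $\hat A$ is injective). Everything else is bookkeeping with associativity of matrix multiplication and the existence of one-sided inverses for full-rank rectangular matrices over $\KK$. I would also remark that the argument is field-agnostic, working verbatim for $\KK=\RR$ and $\KK=\CC$, since it uses no positivity or analytic structure — only linear algebra over the relevant field.
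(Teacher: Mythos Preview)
Your argument is correct. The two inclusions are handled exactly as they should be, and the construction of $K:=(\hat C-\hat B_0\hat A)\hat A^{+}$ is clean: the check $BK=\nul$ goes through because $B\hat C=\hat A=B\hat B_0\hat A$, and $K\hat A=\hat C-\hat B_0\hat A$ uses only $\hat A^{+}\hat A=\II_l$, which follows from the injectivity of $\hat A$ that you isolate.

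The paper takes a different route. Instead of your coordinate-free construction, it performs basis changes in $\KK^m$ and $\KK^n$ so that $\ker A$, $\ker B$, and $\ker(AB)$ are spanned by tail segments of the standard bases; this puts $A$, $B$, and $C$ into explicit block-triangular form. It then writes down the general block shape of $\hat A$, $\hat B$, $\hat C$, computes $\hat B\hat A$ explicitly, and solves for the free blocks to match a given $\hat C$ (electing one block of $\hat B$ to be zero and back-solving for the rest). Your approach avoids the block bookkeeping entirely by making the clever choice $\hat A:=B\hat C$ up front and then correcting an arbitrary $\hat B_0$; it is shorter and basis-independent. The paper's approach, on the other hand, makes the full affine parameterization of all right inverses visible in coordinates, which can be useful if one wants to read off dimensions or degrees of freedom directly.
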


\begin{proof}
We can regard $A$ as a surjective linear map from $\KK^m$ to $\KK^l$ and $B$ as a surjective linear map from $\KK^n$ to $\KK^l$. It is thus clear that $AB=A\circ B$ is a surjective linear map from
$\KK^n$ to $\KK^l$ and hence of full rank. We let $(e_k)_{k=1}^j$ denote the standard basis in $\KK^j$. Applying basis changes we can suppose that $\ker A$ is spanned by $(e_k)_{k>l}$ in $\KK^m$ and $\ker B$ is spanned by
$(e_k)_{k>m}$ in $\KK^n$. Clearly $\ker B \subset \ker(AB)$. A further basis change leaving $(e_k)_{k>m}$ invariant in $\KK^n$ we can further suppose that $\ker(AB)$ is spanned by $(e_k)_{k>l}$ in $\KK^n$.
The dividing the matrices into the block structure of sizes $l, m-l, n-m$ we find
$$
A=\pmat{A_0 &\nul}\;, \quad B=\pmat{B_{00} & B_{10} & \nul \\ B_{10} & B_{11} & \nul}\;,\quad
C:=AB=\pmat{C_{0} & \nul & \nul}
$$
where $A_0$ and $C_0$ are invertible $l\times l$ matrices.
Using $C=AB$ we find that $A_0 B_{01}=\nul$ and hence $B_{01}=A_0^{-1} A_0 B_{01}=\nul$.
Therefore, $B_{00}$ has to be an invertible $l\times l$ matrix and $B_{11}$ and invertible $m-l \times m-l$ matrix.
With this input the right inverses have the structure
$$
\hat A = \pmat{A_0^{-1} \\ \hat A_1}\;,\quad \hat B=\pmat{B_{00}^{-1} & \nul \\ -B_{11}^{-1} B_{10} B_{00}^{-1} & B_{11}^{-1} \\ \hat B_{02} & \hat B_{12}}\;,\quad
\hat C=\pmat{C_0^{-1} \\ \hat C_1 \\ \hat C_2}
$$
where $\hat A_1$, $\hat B_{02}$, $\hat B_{12}$, $\hat C_1$ and $\hat C_2$ are arbitrary matrices of the corresponding sizes.
Now we obtain
$$
\hat B \hat A\,=\, \pmat{C_0^{-1} \\ -B_11^{-1} B_{10} C_0^{-1} +B_{11}^{-1} \hat A_1 \\ \hat B_{02} A_0^{-1}+\hat B_{12} \hat A_1}
$$
Here we used $C_0^{-1}=B_{00}^{-1} A_0^{-1}$ which follows from the observations above. So obviously $\hat B \hat A$ is a right inverse to $C$ for any choice of $\hat A$ and $\hat B$.
On the other hand, for any $\hat C_1$, $\hat C_2$ we can elect $\hat B_{12}=\nul$ and solve for $\hat A_1$ and $\hat B_{02}$ to obtain
$$
\hat C_1\,=\,-B_11^{-1} B_{10} C_0^{-1} +B_{11}^{-1} \hat A_1 \qtx{and}
\hat B_{02} A_0^{-1}\,=\,\hat C_2\;.
$$
\end{proof}

\section{Existence of absolutely continuous component in weak limits of measures}

In Theorem~\ref{Theo-1a}~(ii) we use the following criterion which follows from \cite[Lemma~2]{DK}.

\begin{prop}\label{prop-B1}
Let $\Omega$ be a open subset of $\RR$ and $w(\lambda)\in L^1_{loc}(\Omega)$ which is Lebesgue almost everywhere positive. Let $\mu_n$ be a set of positive measures which converge weakly to $\mu$ and let $\frac{d\mu_n}{d\lambda}$ denote the Radon Nikodym derivative with respect to the Lebesgue measure.
Furthermore let $K\subset \Omega$ be a compact set of positive Lebesgue measure and we denote
$w(K):=\int_K w(\lambda) d\lambda$. If we have
$$
\liminf_{n\to \infty} \frac{1}{w(K)} \int_K - \log\left( \frac{d\mu_n}{d\lambda} \,\frac{1}{w(\lambda}\right)\,w(\lambda)\,d\lambda\,<\, \infty
$$
then $\mu(K)>0$ . 
In particular, if we have this property for any compact set $K\subset \Omega$ of positive Lebesgue measure, then the measure $\mu$ contains an absolutely continuous part in all of $\Omega$, i.e.
$\Omega \subset \supp (\mu_{ac})$, where $\mu_{ac}$ is the absolutely continuous part of $\mu$.
\end{prop}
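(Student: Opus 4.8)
The plan is to follow the argument of Deift--Killip \cite[Lemma~2]{DK}: the whole proof rests on just two ingredients, Jensen's inequality for the concave function $\log$ and the portmanteau theorem for weakly convergent finite positive measures.

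First I would fix a compact $K\subset\Omega$ of positive Lebesgue measure and dispose of the harmless preliminaries: $w\in L^1(K)$ since $w\in L^1_{loc}(\Omega)$, and $w(K)=\int_K w(\lambda)\,d\lambda>0$ since $w>0$ a.e.\ and $|K|>0$; moreover each $\mu_n$ is a finite positive measure because it converges weakly, so $\mu_n(K)<\infty$. With $g_n:=\frac{d\mu_n}{d\lambda}$ the Radon--Nikodym density of the absolutely continuous part of $\mu_n$, one has $\int_K g_n\,d\lambda=:\mu_n^{ac}(K)\le\mu_n(K)$. Applying Jensen's inequality to $\log$, the probability measure $w(\lambda)\,d\lambda/w(K)$ on $K$, and the nonnegative function $g_n/w$ yields
\[
\frac{1}{w(K)}\int_K \log\Big(\tfrac{g_n(\lambda)}{w(\lambda)}\Big)\,w(\lambda)\,d\lambda
\;\le\;\log\Big(\frac{1}{w(K)}\int_K g_n(\lambda)\,d\lambda\Big)
\;=\;\log\frac{\mu_n^{ac}(K)}{w(K)} ,
\]
so after multiplying by $-1$,
\[
\frac{1}{w(K)}\int_K -\log\Big(\tfrac{g_n(\lambda)}{w(\lambda)}\Big)\,w(\lambda)\,d\lambda
\;\ge\;\log\frac{w(K)}{\mu_n^{ac}(K)} .
\]
(The left-hand side is well defined in $(-\infty,+\infty]$: $\log x\le x-1$ controls the positive part of the integrand, while a value $+\infty$ for some $n$ is irrelevant for a finite $\liminf$.) Under the hypothesis there are $M<\infty$ and a subsequence $(n_k)$ with the left-hand side $\le M$ for all $k$, hence $\mu_{n_k}^{ac}(K)\ge w(K)e^{-M}$ and therefore $\mu_{n_k}(K)\ge w(K)e^{-M}>0$.

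The next step is to pass to the limit. Since $K$ is closed, the portmanteau theorem applied to the weak convergence $\mu_n\to\mu$ gives $\mu(K)\ge\limsup_k\mu_{n_k}(K)\ge w(K)e^{-M}>0$, which is the first assertion. For the ``in particular'' statement I would argue by contradiction: if $\mu_{ac}\big((a,b)\big)=0$ for some interval $(a,b)\subset\Omega$, let $N$ be a Borel set with $|N|=0$ carrying the singular part of $\mu$; by inner regularity of Lebesgue measure there is a compact $K\subset(a,b)\setminus N$ with $|K|>0$ (as $(a,b)\setminus N$ has full measure in $(a,b)$), and then $\mu(K)=\mu_{ac}(K)+\mu_{sing}(K)=0+0=0$ since $\frac{d\mu_{ac}}{d\lambda}=0$ a.e.\ on $(a,b)$ and $K\cap N=\emptyset$, contradicting the first part applied to this $K$. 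Hence $\mu_{ac}\big((a,b)\big)>0$ for every interval $(a,b)\subset\Omega$, i.e.\ $\Omega\subset\supp(\mu_{ac})$.

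There is no real obstacle here, as the statement is essentially a repackaging of \cite[Lemma~2]{DK}; the only care needed is bookkeeping: making sure it is the density of the \emph{absolutely continuous} part of $\mu_n$ that enters and that $\mu_n(K)\ge\mu_n^{ac}(K)$; allowing $\log(g_n/w)$ to take the value $-\infty$ on a positive-measure set (harmless for a $\liminf$ hypothesis); and checking that the ``weak'' convergence meant in the theorem is convergence tested against bounded continuous functions, so that the portmanteau inequality $\limsup_n\mu_n(F)\le\mu(F)$ for closed $F$ is indeed available.
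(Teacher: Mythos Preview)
Your proof is correct and follows essentially the same approach as the paper: both rest on Jensen's inequality for $\log$ applied to the probability measure $w(\lambda)\,d\lambda/w(K)$ on $K$, combined with the upper semicontinuity of $\mu_n(K)$ on closed sets under weak convergence. The paper phrases the latter via $\mu(K)=\inf_{\phi\in\Phi}\int\phi\,d\mu$ for continuous compactly supported $\phi\ge\mathbf{1}_K$, whereas you invoke the portmanteau theorem directly and extract a subsequence; these are equivalent devices, and your version also spells out the ``in particular'' clause, which the paper leaves implicit.
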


This follows directly from the following lemma.
\begin{lemma}[Lemma~2 of \cite{DK}]
Under the assumptions as in Proposition~\ref{prop-B1} we have
$$
\liminf_{n\to \infty} \frac{1}{w(K)} \int_K - \log\left( \frac{d\mu_n}{d\lambda} \,\frac{1}{w(\lambda}\right)\,w(\lambda)\,d\lambda\,\geq \log\left( \frac{w(K)}{\mu(K)}\right)
$$
where $w(K):=\int_K w(\lambda) d\lambda > 0$.
\end{lemma}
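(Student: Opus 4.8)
The plan is to deduce the estimate from a single application of Jensen's inequality, after re-weighting the reference measure by $w$, followed by the portmanteau theorem for weak convergence of finite measures. To fix notation, I write $\frac{d\mu_n}{d\lambda}$ for the Radon--Nikodym derivative of the absolutely continuous part of $\mu_n$ with respect to Lebesgue measure and set $f_n(\lambda):=\frac{d\mu_n}{d\lambda}(\lambda)\,\frac1{w(\lambda)}$, so that the integrand occurring in the statement is $-\log f_n$. Since $w\in L^1_{loc}(\Omega)$ is Lebesgue-a.e.\ positive and $w(K)=\int_K w\,d\lambda\in(0,\infty)$, the prescription $d\rho:=\frac1{w(K)}\,w(\lambda)\,\mathbf{1}_K(\lambda)\,d\lambda$ defines a probability measure supported on $K$, and for each fixed $n$ the quantity whose $\liminf$ is taken on the left of the claimed inequality equals $\int_K(-\log f_n)\,d\rho$.

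The first genuine step would be Jensen's inequality for the convex function $t\mapsto-\log t$ on $(0,\infty)$ and the probability measure $\rho$. Here one checks that $f_n$ is $\rho$-integrable, since
\[
\int_K f_n\,d\rho\;=\;\frac1{w(K)}\int_K\frac{d\mu_n}{d\lambda}\,d\lambda\;=\;\frac{(\mu_n)_{ac}(K)}{w(K)}\;\le\;\frac{\mu_n(K)}{w(K)}\;<\;\infty,
\]
and that the elementary bound $(\log t)^+\le t$ keeps the negative part of $-\log f_n$ integrable, so that Jensen applies and gives
\[
\frac1{w(K)}\int_K-\log\!\Big(\tfrac{d\mu_n}{d\lambda}\,\tfrac1w\Big)w\,d\lambda
\;=\;\int_K(-\log f_n)\,d\rho
\;\ge\;-\log\!\Big(\int_K f_n\,d\rho\Big)
\;\ge\;\log\!\Big(\frac{w(K)}{\mu_n(K)}\Big),
\]
the final inequality using $(\mu_n)_{ac}(K)\le\mu_n(K)$ and the monotonicity of $-\log$, with both sides read as $+\infty$ when $\mu_n(K)=0$. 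The thing to notice is that the only feature of $\mu_n$ that survives through its density is the total mass $\mu_n(K)$, and that is exactly what makes the bound clean.

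Finally I would let $n\to\infty$: taking $\liminf_n$ on both sides and using continuity and monotonicity of $\log$ gives
\[
\liminf_{n\to\infty}\frac1{w(K)}\int_K-\log\!\Big(\tfrac{d\mu_n}{d\lambda}\,\tfrac1w\Big)w\,d\lambda\;\ge\;\log\!\Big(\frac{w(K)}{\limsup_{n\to\infty}\mu_n(K)}\Big),
\]
so it suffices to establish $\limsup_{n\to\infty}\mu_n(K)\le\mu(K)$. This is the portmanteau inequality on the closed (indeed compact) set $K$: choosing, by Urysohn, continuous functions $g_\delta$ with $\mathbf{1}_K\le g_\delta\le\mathbf{1}_{K_\delta}$ for the open $\delta$-neighbourhood $K_\delta$ of $K$, the weak convergence $\mu_n\to\mu$ yields $\limsup_n\mu_n(K)\le\lim_n\int g_\delta\,d\mu_n=\int g_\delta\,d\mu\le\mu(K_\delta)$, and $\mu(K_\delta)\downarrow\mu(K)$ as $\delta\downarrow0$ by continuity from above of the finite measure $\mu$. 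Plugging this in and using that $t\mapsto\log(w(K)/t)$ is nonincreasing yields the asserted bound $\ge\log(w(K)/\mu(K))$, with the convention $\log(w(K)/0)=+\infty$. The main point that requires genuine care is exactly this last one --- the upper semicontinuity $\limsup_n\mu_n(K)\le\mu(K)$ on a closed set --- while everything preceding it is bookkeeping around Jensen's inequality and the elementary estimate $(\mu_n)_{ac}(K)\le\mu_n(K)$; I therefore do not anticipate a serious obstacle.
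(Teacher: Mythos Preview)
Your proof is correct and follows essentially the same route as the paper: both arguments combine Jensen's inequality for $-\log$ against the probability measure $\tfrac{1}{w(K)}\,w\,\mathbf{1}_K\,d\lambda$ with the portmanteau-type bound $\limsup_n \mu_n(K)\le \mu(K)$ obtained by approximating $\mathbf{1}_K$ from above by compactly supported continuous functions. The only cosmetic difference is the order: the paper first establishes $\mu(K)\ge \limsup_n \int_K \tfrac{d\mu_n}{d\lambda}\,d\lambda$ and then applies $-\log$ and Jensen, whereas you apply Jensen at each fixed $n$ and pass to the limit afterwards; the content is the same.
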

\begin{proof}
Let $\Phi$ denote the set of positive continuous functions of compact support which take value 1 on $K$, then
\begin{align*}
\mu(K) = \inf_{\phi\in\Phi} \int \phi d\mu &= \inf_{\phi\in\Phi} \lim_{n\to\infty} \int \phi d\mu_n \geq
\inf_{\phi\in\Phi} \limsup_{n\to\infty} \int \phi \frac{d\mu_n(\lambda)}{d\lambda}\,d\lambda\\
 &\geq \limsup_{n\to\infty} \int_K \frac{d\mu_n(\lambda)}{d\lambda}\,\frac{1}{w(\lambda)} \;w(\lambda)
\end{align*}
Now divide by $w(K)>0$  take   $-\log(\cdot)$ on both sides and use Jensen's  inequality to get the result. Note that taking $-\log(\cdot)$ changes the inequality sign and $\limsup$ to $\liminf$.
\end{proof}

\end{document}